\def\bfB{\mathbf{B}}
\def\Ortho{\mathrm{O}}
\newcommand{\Hom}{\operatorname{Hom}}
\newcommand{\Mat}{\operatorname{M}}
\newcommand{\id}{\operatorname{id}}
\newcommand{\GL}{\operatorname{GL}}
\newcommand{\Rad}{\operatorname{Rad}}
\newcommand{\Irr}{\operatorname{Irr}}
\newcommand{\Nil}{\operatorname{Nil}}
\newcommand{\Co}{\operatorname{Co}}
\newcommand{\SL}{\operatorname{SL}}
\newcommand{\Ker}{\operatorname{Ker}}
\newcommand{\End}{\operatorname{End}}
\newcommand{\Vect}{\operatorname{span}}
\newcommand{\im}{\operatorname{Im}}
\newcommand{\Sp}{\operatorname{Sp}}
\newcommand{\spec}{\operatorname{sp}}
\newcommand{\rk}{\operatorname{rk}}
\renewcommand{\setminus}{\smallsetminus}
\def\F{\mathbb{F}}
\def\K{\mathbb{K}}
\def\N{\mathbb{N}}
\def\Z{\mathbb{Z}}
\renewcommand{\L}{\mathbb{L}}
\def\calA{\mathcal{A}}
\def\calC{\mathcal{C}}
\def\lcro{\mathopen{[\![}}
\def\rcro{\mathclose{]\!]}}
\theoremstyle{definition}
\newtheorem{Def}{Definition}[section]
\newtheorem{Not}[Def]{Notation}
\theoremstyle{plain}
\newtheorem{theo}{Theorem}[section]
\newtheorem{prop}[theo]{Proposition}
\newtheorem{cor}[theo]{Corollary}
\newtheorem{lemma}[theo]{Lemma}
\theoremstyle{plain}
\theoremstyle{remark}
\newtheorem{Rems}{Remarks}
\newtheorem{Rem}[Rems]{Remark}
\title{Products of unipotent elements of index $2$ in orthogonal and symplectic groups}
\author{Cl\'ement de Seguins Pazzis\footnote{Universit\'e de Versailles Saint-Quentin-en-Yvelines, Laboratoire de Math\'ematiques
de Versailles, 45 avenue des Etats-Unis, 78035 Versailles cedex, France}
\footnote{e-mail address: dsp.prof@gmail.com}}
\begin{document}

\thispagestyle{plain}

\maketitle

\begin{abstract}
An automorphism $u$ of a vector space is called unipotent of index $2$ whenever $(u-\id)^2=0$.
Let $b$ be a non-degenerate symmetric or skewsymmetric bilinear form on a vector space $V$ over a field $\F$ of characteristic different from $2$.

Here, we characterize the elements of the isometry group of $b$ that are the product of two unipotent isometries of index $2$. In particular, if $b$ is symplectic we prove that an element of the symplectic group of $b$ is the product of two unipotent isometries of index $2$ if and only if it has no Jordan cell of odd size for the eigenvalue $-1$.
As an application, we prove that every element of a symplectic group is the product of three unipotent elements of index $2$ (and no less in general).

For orthogonal groups, the classification closely matches the classification of sums of two square-zero skewselfadjoint operators that
was obtained in a recent article \cite{dSPsquarezeroquadratic}.
\end{abstract}

\vskip 2mm
\noindent
\emph{AMS Classification:} 15A23, 11E04, 15A21

\vskip 2mm
\noindent
\emph{Keywords:} Orthogonal group, Symplectic group, Decomposition, Unipotent element of index $2$, Wall invariants


\section{Introduction}

\subsection{The problem}

Let $\F$ be a field of characteristic different from $2$, whose group of units we denote by $\F^*$.
Throughout, all the vector spaces under consideration have $\F$ as ground field (unless stated otherwise) and are finite-dimensional.
We use the French notation system for integers: $\N$ denotes the set of all non-negative integers, and $\N^*$ the set of all positive ones.

Throughout, we consider a bilinear form $b : V \times V \rightarrow \F$ on a vector space $V$ and we assume that it is either symmetric
($\forall (x,y)\in V^2, \; b(y,x)=b(x,y)$) or skewsymmetric (i.e. $\forall (x,y) \in V^2, \; b(y,x)=-b(x,y)$).

We also assume that $b$ is non-degenerate, meaning that the (left)-radical $\Rad(b):=\{x \in V : \; b(x,-)=0\}$ of $b$
is zero. We say that $b$ is symplectic when it is non-degenerate and skewsymmetric.

\vskip 3mm
In \cite{Wonenburger}, Wonenburger proved that if $b$ is symmetric (and non-degenerate),
then every element of the orthogonal group $\Ortho(b)$ is the product of two involutions. Her result was later extended by Gow \cite{Gow} to
symplectic groups over fields of characteristic $2$, as well as for orthogonal groups of quadratic forms over such fields.
In symplectic groups over fields of characteristic different from $2$, it is not difficult to find elements that are not products of two involutions, and the
elements that are products of two involutions have been determined by Nielsen (unpublished, see the recent \cite{dSP2invol} for a statement and a proof).

An element $a$ of an $\F$-algebra $\mathcal{A}$ is called \textbf{unipotent of index $2$} whenever $(a-1_\mathcal{A})^2=0_\mathcal{A}$, meaning that $a=1_\mathcal{A}+x$
for some $x \in \mathcal{A}$ such that $x^2=0_\calA$ (i.e.\ $x$ is a square-zero element). Such elements are always invertible.
To abbreviate things, we will call such elements \textbf{$U_2$-elements} of the algebra $\mathcal{A}$.

Remembering that every symplectic transvection is unipotent of index $2$, whereas the reflections are involutions in orthogonal groups, we may view the $U_2$-elements
as the equivalent of the involutions in orthogonal groups, and naturally ask what becomes of Wonenburger and Nielsen's results if
we replace involutions with $U_2$-elements. Of course, since the determinant of a $U_2$-element equals $1$, it is clear that there are isometries that are not the product of two $U_2$-elements (and neither of finitely many of them). One could conjecture that every element of a symplectic group is the product of two $U_2$-elements, however it turns out that this result is false! Indeed,
for the group $\GL(V)$, we have the following known characterization of the elements that are the product of two $U_2$-automorphisms
(see \cite{WangWu} for the field of complex number, and \cite{Bothaunipotent} for the generalization to an arbitrary field):

\begin{theo}[Wang-Wu-Botha theorem]\label{theo:Botha}
Let $u$ be an automorphism of a finite-dimensional vector space $V$ (over a field whose characteristic differs from $2$).
The following conditions are equivalent:
\begin{enumerate}[(i)]
\item $u$ is the product of two $U_2$-automorphisms of $V$;
\item $u$ is similar to its inverse and has no Jordan cell of odd size for the eigenvalue $-1$.
\end{enumerate}
\end{theo}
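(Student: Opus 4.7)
The plan is to prove both implications.

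For the direction (i) $\Rightarrow$ (ii), I suppose $u = st$ with $s = 1 + x$ and $t = 1 + y$ where $x^2 = y^2 = 0$. The fact that $u$ is similar to $u^{-1}$ can be extracted from the elementary observation that for any square-zero operator $x$, the operators $x$ and $-x$ share the same Jordan structure (since both decompose as a direct sum of $J_2$ blocks and zero blocks). Hence each $U_2$-element is similar to its inverse, and combining this fact with a module-theoretic analysis on generalized eigenspaces one deduces the symmetry $J_k(\lambda) \leftrightarrow J_k(\lambda^{-1})$ of the Jordan block structure of $u$. The more delicate necessary condition is the absence of odd-size Jordan cells at $-1$, which I would establish by a parity argument on the nullities $\dim \ker (u+1)^k$: using the expansion $u + 1 = 2 + x + y + xy$ and the square-zero relations, one shows that these nullities increase by even amounts as $k$ grows, leaving no room for odd Jordan blocks at $-1$.

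For the direction (ii) $\Rightarrow$ (i), I reduce the problem to a list of indecomposable building blocks. First, the set of products of two $U_2$-automorphisms is closed under direct sums, since $(1+x_1) \oplus (1+x_2) = 1 + (x_1 \oplus x_2)$ is a $U_2$-element on $V_1 \oplus V_2$. Using the Jordan decomposition of $u$ together with condition (ii), one writes $V$ as a direct sum of $u$-invariant subspaces of the following three types:
\begin{enumerate}[(a)]
\item a single Jordan cell $J_k(1)$ for some $k \geq 1$;
\item a single even-size Jordan cell $J_{2k}(-1)$ for some $k \geq 1$;
\item a pair $J_k(\lambda) \oplus J_k(\lambda^{-1})$ for some $\lambda \in \F^* \setminus \{\pm 1\}$ and some $k \geq 1$.
\end{enumerate}
On each such block one must exhibit an explicit factorization $u = (1+x)(1+y)$ with $x^2 = y^2 = 0$.

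Cases (a) and (c) are handled by direct computation in a Jordan basis: one solves the relation $x + y + xy = u - 1$ by choosing $x$ nilpotent of index $\leq 2$ and then setting $y := (1+x)^{-1}(u-1-x)$, verifying that with an appropriate choice of $x$ the resulting $y$ is square-zero. The main obstacle is case (b): constructing, for each $k \geq 1$, a factorization $J_{2k}(-1) = (1+x)(1+y)$ with $x^2 = y^2 = 0$. Here the evenness of the Jordan cell size is essential, and indeed the analogous construction necessarily fails for odd-sized Jordan cells at $-1$, which is exactly the obstruction established in the first direction. The construction proceeds by pairing the basis vectors into ``halves'' adapted to the Jordan block and realizing $x$ and $y$ as strictly triangular matrices with respect to this pairing; the full explicit formulas can be found in \cite{Bothaunipotent}.
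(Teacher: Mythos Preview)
The paper does not prove this theorem; it is quoted from the literature (Wang--Wu for $\C$, Botha for arbitrary fields) and used as a black box. So there is no ``paper's own proof'' to compare against, and your proposal must stand on its own. As written it does not: there are three genuine gaps.

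\textbf{(i) $\Rightarrow$ (ii), similarity to the inverse.} You observe that each $U_2$-factor is similar to its own inverse, which is true (if $a^2=0$ then $a\sim -a$, hence $1+a\sim 1-a=(1+a)^{-1}$). But this does \emph{not} imply that a product $st$ of two such elements is similar to $t^{-1}s^{-1}$: the conjugating matrices for $s$ and $t$ are unrelated. Your appeal to ``a module-theoretic analysis on generalized eigenspaces'' is not an argument. The paper itself flags this point as ``not obvious at all''. A correct route (as in Botha's paper, and implicit in Lemma~\ref{lemma:invariantscrossedprod} here) is to put $u$ into the block form $\begin{pmatrix}0 & -I\\ I & N\end{pmatrix}$ coming from the decomposition $V=\im a_1\oplus\im a_2$, and read off that the invariant factors are palindromials $R(p_i)$.

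\textbf{(i) $\Rightarrow$ (ii), absence of odd cells at $-1$.} Your parity claim, even if established, is too weak. Showing that the increments $\dim\Ker(u+\id)^k-\dim\Ker(u+\id)^{k-1}$ are all even only yields that each Jordan number $n_{t+1,k}(u)$ is even; it does \emph{not} force $n_{t+1,k}(u)=0$ for odd $k$, which is what the theorem asserts. You need a finer argument that actually distinguishes odd sizes from even sizes, not merely a parity count.

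\textbf{(ii) $\Rightarrow$ (i), the block list.} Your three cases (a), (b), (c) tacitly assume that every eigenvalue of $u$ lies in $\F$, i.e.\ that $\F$ is algebraically closed. Over a general field of characteristic $\neq 2$ one also has cyclic blocks $C(p^r)$ with $p$ an irreducible palindromial of even degree, and pairs $C(p^r)\oplus C((p^\sharp)^r)$ with $p\neq p^\sharp$ irreducible of degree $>1$. These require separate treatment (and this is precisely the generality Botha's paper supplies over Wang--Wu's complex case). Moreover, in cases (a)--(c) you do not actually carry out the constructions but defer to \cite{Bothaunipotent}, so the proposal is closer to a proof outline than a proof.
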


In a symplectic group, there can be elements that have Jordan cells of size $1$ for the eigenvalue $-1$
(for example $-\id_V$ if $V \neq \{0\}$)!

\vskip 3mm
Remember that the $b$-adjoint of an endomorphism $u$ of $V$ is the unique endomorphism $u^\star$ of $V$ such that
$$\forall (x,y)\in V^2, \; b(u^\star(x),y)=b(x,u(y)).$$
For $u$ to belong to the isometry group of $b$, it is necessary and sufficient that $u$ be invertible and $u^\star=u^{-1}$.
Moreover, if $u=\id_V+a$ for some square-zero endomorphism $a$, then $u^\star=\id_V+a^\star$ and $u^{-1}=\id_V-a$, and hence
$u$ is in the isometry group of $b$ if and only if $a^\star=-a$, i.e.\ $a$ is $b$-skewselfadjoint.
As will turn out, the results are analogous to the ones we have obtained in a recent article \cite{dSPsquarezeroquadratic}, in which
we determined the $b$-skewselfadjoint endomorphisms that are the \emph{sum} of two square-zero $b$-skewselfadjoint endomorphisms.
Our proofs will follow similar patterns although there are subtle differences.

Our results involve the classification of the conjugacy classes in orthogonal and symplectic groups:
the fundamental invariants, which we call the Wall invariants (but which should probably be called Williamson invariants), are recalled in Section \ref{section:Wallinvariants}.

\subsection{The viewpoint of pairs}

It appears that a more efficient viewpoint on our problem is to consider pairs consisting of a form and of an isometry for this form.
More precisely:

\begin{Def}
A \textbf{$1$-isopair} is a pair $(b,u)$ consisting of a non-degenerate symmetric bilinear form $b$ and of an isometry $u \in \Ortho(b)$.

A \textbf{$-1$-isopair} is a pair $(b,u)$ consisting of a symplectic form $b$ and of an isometry $u \in \Sp(b)$.
\end{Def}

Let $\varepsilon \in \{-1,1\}$. Two $\varepsilon$-isopairs $(b,u)$ and $(c,v)$, with underlying vector spaces $U$ and $V$, are called \textbf{isometric} whenever there exists a vector space isomorphism $\varphi : U \overset{\simeq}{\longrightarrow} V$ such that
$$\forall (x,y)\in U^2, \; c(\varphi(x),\varphi(y))=b(x,y) \quad \text{and} \quad \varphi \circ u \circ \varphi^{-1}=v.$$
This defines an equivalence relation on the collection of all $\varepsilon$-isopairs over $\F$.

Next, the orthogonal sum of two $\varepsilon$-isopairs $(b,u)$ and $(b',u')$, with underlying vector spaces $U$ and $U'$,
is defined as the pair $(b,u) \bot (b',u'):=(b \bot b', u \oplus u')$, so that
$$\forall (x,x')\in U \times V, \; \forall (y,y')\in U \times U', \quad
(b \bot b')\bigl((x,x'),(y,y')\bigr)=b(x,y)+b'(x',y')$$
and
$$\forall (x,x')\in U \times U', \; (u \oplus u')(x,x')=\bigl(u(x),u'(x')\bigr).$$
One checks that orthogonal sums are compatible with isometries (i.e.\ replacing one summand with an isometric summand yields an isometric sum).

Another important notion is the one of an induced isopair.
Let $(b,u)$ be an $\varepsilon$-isopair, with underlying vector space $U$, and $V$ be a linear subspace of $U$ that is stable under $u$.
The bilinear form $b$ induces a regular bilinear form $\overline{b}$ on $V/(V \cap V^{\bot_b})$, and it is symmetric if $\varepsilon=1$
and symplectic otherwise. Since $u$ is a $b$-isometry that stabilizes $V$, it also stabilizes $V^{\bot_b}$, and hence also
$V \cap V^{\bot_b}$. Hence, $u$ induces an endomorphism $\overline{u}$ of $V/(V \cap V^{\bot_b})$, and clearly $\overline{u}$
is a $\overline{b}$-isometry. The $\varepsilon$-isopair $(\overline{b},\overline{u})$ is denoted by $(b,u)^V$ and called the isopair
induced by $(b,u)$ on $V$. For example, if $V$ is $b$-regular then so is $V^{\bot_b}$ and
$(b,u) \simeq (b,u)^V \bot (b,u)^{V^{\bot_b}}$.

\begin{Def}
An $\varepsilon$-isopair $(b,u)$ is called \textbf{$U_2$-splittable} when
there exist $U_2$-elements $v$ and $w$ of the isometry group of $b$ such that $u=vw$,
i.e.\ there exist $U_2$-elements $v$ and $w$ of $\GL(V)$, where $V$ denotes the underlying vector space of $(b,u)$, such that $u=vw$, and $(b,v)$ and $(b,w)$ are $\varepsilon$-isopairs.
\end{Def}

\begin{Rem}
If an $\varepsilon$-isopair is $U_2$-splittable, so is any $\varepsilon$-isopair that is isometric to it.
\end{Rem}

\begin{Rem}
Let $(b,u)$ and $(b',u')$ be $U_2$-splittable $\varepsilon$-isopairs.
Choose then $U_2$-elements $s_1,s_2$ in the isometry group of $b$, and $U_2$-elements $s'_1,s'_2$ in the isometry group of $b'$, such that
$u=s_1s_2$ and $u'=s'_1s'_2$. Then, with $S_1:=s_1 \oplus s'_1$ and $S_2:=s_2 \oplus s'_2$, one sees that $u \oplus u'=S_1S_2$ and that
$S_1$ and $S_2$ are $U_2$-elements and belong to the isometry group of $b \bot b'$.
Hence, the $\varepsilon$-isopair $(b,u) \bot (b',u')$ is $U_2$-splittable.
\end{Rem}

The converse fails. In theory, it is possible that none of $(b,u)$ and $(b',u')$ is $U_2$-splittable, but
$(b,u) \bot (b',u')$ is.

\begin{Rem}\label{remark:inducedpairs}
Let $(b,u)$ be a $U_2$-splittable $\varepsilon$-isopair, choose a corresponding splitting $u=u_1u_2$, and let
$V$ be a subspace of the underlying space of $(b,u)$ that is stable under both $u_1$ and $u_2$.
Then clearly the $\varepsilon$-isopairs $(b,u_1)^V$ and $(b,u_2)^V$ can be used to see that $(b,u)^V$ is $U_2$-splittable.
\end{Rem}

\begin{Def}
An $\varepsilon$-isopair is called \textbf{trivial} if the underlying vector space is zero.

An $\varepsilon$-isopair is called \textbf{indecomposable} when it is nontrivial
and it is not isometric to the orthogonal direct sum of two nontrivial isopairs
(in other words, such a pair $(b,u)$, over a space $V$, is indecomposable when there is no $b$-orthogonal decomposition
$V=V_1 \overset{\bot_b}{\oplus} V_2$ in which $V_1$ and $V_2$ are nonzero and stable under $u$).
\end{Def}

\subsection{A review of conjugacy classes and extensions of bilinear forms}\label{section:extension}

We denote by $\Irr(\F)$ the set of all irreducible polynomials $p \in \F[t]$ such that $p \neq t$ (beware that
in \cite{dSPsquarezeroquadratic} the notation $\Irr(\F)$ was used for a different set).

Let $V$ be a vector space over $\F$.
The classification of similarity classes in the algebra $\End(V)$ is well known: here, we will use the viewpoint of the primary invariants.
Every endomorphism $u$ of $V$ is the direct sum of cyclic endomorphisms whose minimal polynomials are powers of
monic irreducible polynomials: those minimal polynomials are uniquely determined up to permutation, and they are called the
primary invariants of $u$. For $p \in \Irr(\F) \cup \{t\}$ and $r \geq 1$, we denote by $n_{p,r}(u)$ the number of summands with minimal polynomial
$p^r$ in such a decomposition (it is the \textbf{Jordan number} of $u$ attached to the pair $(p,r)$).
The similarity class of $u$ is then determined by the data of the family $(n_{p,r}(u))_{p \in \Irr(\F)\cup \{t\}, r \geq 1}$.

Now, let $u$ be a $b$-isometry, and consider the group $G$ of all $b$-isometries.
For $u$ to be $U_2$-splittable in $G$, it is necessary that $u$ be similar to $u^{-1}$ (this is not obvious at all, see Theorem \ref{theo:Botha} nevertheless).
However, this is automatic for a $b$-isometry because classically $u^\star$ is similar to $u$!

Given a monic polynomial $p(t)$ of degree $d$ such that $p(0) \neq 0$, we denote by
$$p^\sharp(t):=p(0)^{-1}t^d p(1/t)$$
the reciprocal polynomial of $p$ (note that $(p^\sharp)^\sharp=p$), and we say that $p$ is a \textbf{palindromial}
whenever $p^\sharp=p$. In that case and if $p$ is irreducible and $p \neq t\pm 1$, it turns out that $p(0)=1$ and that the degree $d$ of $p$ is even.
Then, an automorphism $u$ of a vector space $V$ is similar to its inverse if and only if, for all
$p \in \Irr(\F)$ such that $p^\sharp \neq p$, and for all $r \geq 1$, one has $n_{p,r}(u)=n_{p^\sharp,r}(u)$.

Now, let $p \in \Irr(\F)$ be an irreducible palindromial that is different from $t\pm 1$. Denoting by $2d$ the degree of $p$, one checks that
$p(t)=t^d m(t+t^{-1})$ for a unique $m \in \Irr(\F) \setminus \{t\pm 2\}$ of degree $d$.
We consider the ring $\F[t,t^{-1}]$ of Laurent polynomials, equipped with the involution that takes $t$ to $t^{-1}$.
The subring of selfadjoint elements is $\F[t+t^{-1}]$.
The ideal generated by $p$ is invariant under the involution under consideration, the quotient field
$\L:=\F[t,t^{-1}]/(p)$ is naturally isomorphic to the residue field $\F[t]/(p)$, and we denote by
$\lambda \mapsto \lambda^\bullet$ the induced involution of $\L$.
In $\L$, the subfield $\K$ of selfadjoint elements is $\F[\overline{t}+\overline{t}^{-1}]$, where $\overline{t}$ denotes the class of the indeterminate
$t$ modulo $(p)$, and in $\L$ the minimal polynomial of $\overline{t}+\overline{t}^{-1}$ is $m$.
Hence, the subfield $\K$ is isomorphic to $\F[t]/(m)$
through the isomorphism $\psi$ that takes the class $\overline{t}+\overline{t}^{-1}$ to the class of $t$ modulo $m$.
We consider the $\F$-linear form $e_m : \F[t]/(m) \rightarrow \F$ that takes the class of $1$ to $1$, and the class of $t^k$ to $0$ for all
$k \in \lcro 1,d-1\rcro$,
and finally we set
$$f_p : \lambda \in \L \mapsto e_m(\psi(\lambda+\lambda^\bullet)) \in \F.$$
Hence, $f_p$ is a nonzero linear form on the $\F$-vector space $\L$, and $f_p(\lambda^\bullet)=f_p(\lambda)$ for all $\lambda \in \L$.

The next step is a general construction: let $V$ be an $\L$-vector space, and
$B : V^2 \rightarrow \F$ be an $\F$-bilinear form such that
$$\forall (x,y)\in V^2, \; \forall \lambda \in \mathbb{L}, \; B(\lambda^\bullet x,y)=B(x,\lambda y).$$
For all $(x,y)\in V^2$, the mapping $\lambda \mapsto B(x,\lambda y)$ is an $\F$-linear form on
$\L$ and hence it reads $\lambda \mapsto f_p(\lambda B^\L(x,y))$ for a unique $B^\L(x,y)\in \L$.
This yields a mapping $B^\L : V^2 \rightarrow \L$ and one checks that it is $\F$-bilinear and even right-$\L$-linear.
Moreover, if $B$ is non-degenerate then so is $B^\L$.
Now, assume that $B$ is symmetric. Let $(x,y) \in V^2$. Then, for all $\lambda \in \F$,
$$B(y,\lambda x)=B(\lambda^\bullet y,x)=B(x,\lambda^\bullet y)=f_p(\lambda^\bullet B^\L(x,y))=f_p\bigl(\lambda B^\L(x,y)^\bullet\bigr)$$
and hence $B^{\mathbb{L}}(y,x)=B^\L(x,y)^\bullet$. Likewise, if $B$ is skewsymmetric one finds
$B^{\mathbb{L}}(y,x)=-B^\L(x,y)^\bullet$. To sum up:
\begin{itemize}
\item If $B$ is symmetric and non-degenerate, then $B^{\mathbb{L}}$ is a non-degenerate Hermitian form on the $\L$-vector space $V$.
\item If $B$ is symplectic then $B^{\mathbb{L}}$ is a non-degenerate skew-Hermitian form on the $\L$-vector space $V$.
\end{itemize}
Remember finally that skew-Hermitian forms are in one-to-one correspondence with Hermitian forms: by taking an element
$\eta \in \L^*$ such that $\eta^\bullet =-\eta$ (such an element always exists), the Hermitian form $h$ gives rise to the
skew-Hermitian form $\eta h$ and vice versa.

\subsection{A review of conjugacy classes in orthogonal and symplectic groups}\label{section:Wallinvariants}

Let $(b,u)$ be an $\varepsilon$-isopair, and let $r \geq 1$ be a positive integer.
Let $p \in \Irr(\F)$ be an irreducible \emph{palindromial} of degree $2d$.

Set $v:=u+u^{-1}=u+u^\star$, which is $b$-selfadjoint.
We denote by $V_{p,r}$ the cokernel of the (injective) linear map
$$\Ker m(v)^{r+1}/\Ker m(v)^r \longrightarrow \Ker m(v)^{r}/\Ker m(v)^{r-1}$$
induced by $m(v)$. This cokernel is naturally identified with the quotient space
$$V_{p,r}:=\Ker m(v)^r/\bigl(\Ker m(v)^{r-1}+(\im m(v) \cap \Ker m(v)^r)\bigr).$$
We consider the bilinear form
$$b_{p,r} : (x,y) \mapsto b\bigl(x,m(v)^{r-1}[y]\bigr)$$
on $\Ker m(v)^{r}$.
Noting that $m(v)$ is $b$-selfadjoint, we obtain that $b_{p,r}$ is symmetric if $\varepsilon=1$,
skewsymmetric if $\varepsilon=-1$.
The radical of $b_{p,r}$ is the intersection of $\Ker m(v)^r$
with the inverse image of $(\Ker m(v)^r)^{\bot_b}=\im m(v)^r$ under $m(v)^{r-1}$, and one easily
checks that this inverse image equals $\Ker m(v)^{r-1}+(\Ker m(v)^r \cap \im m(v))$.
Hence, $b_{p,r}$ induces a non-degenerate $\F$-bilinear form $\overline{b_{p,r}}$ on $V_{p,r}$,
a form that is symmetric if $\varepsilon=1$, skewsymmetric if $\varepsilon=-1$.
The quotient spaces $\Ker m(v)^{r+1}/\Ker m(v)^r$ and $\Ker m(v)^{r}/\Ker m(v)^{r-1}$ are naturally seen as vector spaces over $\L:=\F[t,t^{-1}]/(p)$, and hence so are the said cokernel.
Since $u$ is a $b$-isometry it turns out that
$\overline{b_{p,r}}(x,\lambda y)=\overline{b_{p,r}}(\lambda^\bullet x,y)$ for all
$\lambda \in \L$ and all $(x,y)\in (V_{p,r})^2$.
Hence, by using the extension process described in Section \ref{section:extension}, we recover a form
$\overline{b_{p,r}}^\L$ on the $\L$-vector space $V_{p,r}$, either Hermitian if $b$ is symmetric, or skew-Hermitian if $b$ is symplectic.
In any case, we denote by $(b,u)_{p,r}$ this form and call it the \textbf{Hermitian Wall invariant} of $(b,u)$ attached to $(p,r)$.

The following facts are easily checked:
\begin{itemize}
\item Given an $\varepsilon$-isopair $(b',u')$ that is isometric to $(b,u)$, the Hermitian or skew-Hermitian forms
$(b,u)_{p,r}$ and $(b',u')_{p,r}$ are equivalent;
\item Given another $\varepsilon$-isopair $(b',u')$, one has
$((b,u) \bot (b',u'))_{p,r} \simeq (b,u)_{p,r} \bot (b',u')_{p,r}$.
\end{itemize}

There are two additional sets of invariants, and here we must differentiate more profoundly between symmetric and
symplectic forms.
Assume first that $\varepsilon=1$, and let $r=2k+1$ be an odd positive integer.
Again, we set $v:=u+u^{-1}$ and we note that $(v-2\id_V)^k=(-1)^k (u-\id_V)^k (u^{-1}-\id_V)^k$.
Like in the above, the symmetric bilinear form
$(x,y) \mapsto b(x,(v-2\id_V)^k(y))$ induces a non-degenerate symmetric bilinear form
$(b,u)_{t-1,2k+1}$ on $\Ker (u-\id)^r/\bigl(\Ker (u-\id)^{r-1}+(\im (u-\id) \cap \Ker (u-\id)^r)\bigr)$.
Likewise,
$(x,y) \mapsto b(x,(v+2\id_V)^k(y))$ induces a non-degenerate symmetric bilinear form
$(b,u)_{t+1,2k+1}$ on $\Ker (u+\id)^r/\bigl(\Ker (u+\id)^{r-1}+(\im (u+\id) \cap \Ker (u+\id)^r)\bigr)$.
These are the \textbf{quadratic Wall invariants} of $(b,u)$.

The classification of $1$-isopairs is then given in the next theorem.

\begin{theo}[Wall's theorem for orthogonal groups, see \cite{Wall}]
Let $(b,u)$ and $(b',u')$ be $1$-isopairs.
For $(b,u)$ to be isometric to $(b',u')$, it is necessary and sufficient that all the following conditions hold:
\begin{enumerate}[(i)]
\item For all $p \in \Irr(\F)$ which is not a palindromial, and all $r \geq 1$, one has $n_{p,r}(u)=n_{p,r}(u')$.
\item For all $p \in \Irr(\F) \setminus \{t+1,t-1\}$ such that $p=p^\sharp$, and all $r \geq 1$,
the Hermitian forms $(b,u)_{p,r}$ and $(b',u')_{p,r}$ are equivalent.
\item For every $\eta \in \{-1,1\}$ and every odd integer $r \geq 1$, the non-degenerate symmetric bilinear forms
$(b,u)_{t-\eta,r}$ and $(b',u')_{t-\eta,r}$ are equivalent.
\end{enumerate}
\end{theo}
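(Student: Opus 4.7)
Necessity is immediate: the Jordan numbers $n_{p,r}(u)$ are similarity invariants of $u$, and the Hermitian and quadratic Wall invariants were defined functorially from the pair $(b,u)$, so they are preserved under any isometry of $1$-isopairs. For sufficiency, I would produce a canonical $b$-orthogonal decomposition of $(b,u)$ into ``elementary'' summands whose isometry classes are each controlled by exactly one entry in the list (i)--(iii). The starting point is the primary decomposition $V=\bigoplus_p V_{(p)}$ with $V_{(p)}:=\Ker p(u)^\infty$: since $u^\star=u^{-1}$, the subspaces $V_{(p)}$ and $V_{(q)}$ are $b$-orthogonal unless $q=p^\sharp$. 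If $p\neq p^\sharp$, then $V_{(p)}\oplus V_{(p^\sharp)}$ is $b$-regular and carries a hyperbolic structure, and the induced $1$-isopair on it is determined up to isometry by the similarity class of the restriction of $u$ to $V_{(p)}$, which gives condition~(i). If $p=p^\sharp$, then $V_{(p)}$ is itself $b$-regular, and the classification reduces to the case of a single palindromial primary component.

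When $p$ is palindromial and $p\neq t\pm 1$, I would apply the extension machinery of Section~\ref{section:extension}. The quotient $V_{p,r}$ is naturally an $\L$-vector space and the pairing $b_{p,r}$ lifts to the Hermitian form $(b,u)_{p,r}$ on it. Using diagonalization of Hermitian forms over $\L$, together with a lifting procedure that turns each diagonal entry into a cyclic $u$-stable $b$-regular subspace of $V$, one splits off indecomposable summands corresponding to one-dimensional Hermitian pieces. The isometry class of the whole component is then determined by the equivalence class of $(b,u)_{p,r}$ for every $r\geq 1$, which gives condition~(ii).

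The case $p=t\pm 1$ is the most delicate because $w:=u\mp\id_V$ is nilpotent. The bilinear form $(x,y)\mapsto b(x,w^{r-1}y)$ on $\Ker w^r$ is symmetric when $r$ is odd and skewsymmetric when $r$ is even, so the induced nondegenerate form on $V_{t\mp 1,r}$ is a quadratic form in the odd case (a genuine extra invariant, condition~(iii)) but an alternating form in the even case (determined by dimension, hence already controlled by~(i)). The classification then proceeds by a Witt-type extraction: pick a vector whose image in some $V_{t\mp 1,r}$ is anisotropic for odd $r$, or pair two vectors into a hyperbolic plane of the alternating form for even $r$; verify that the generated cyclic $u$-stable subspace is $b$-regular; recurse on its $b$-orthogonal complement.

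The main obstacle I foresee is this extraction step for $p=t\pm 1$, because keeping track of the quadratic Wall invariants across the recursion requires verifying that splitting off a cyclic summand with prescribed Jordan size does not perturb the forms on the remaining quotients $V_{t\mp 1,r'}$ except by removing the one or two dimensions it actually consumes; this is sensitive to parity and must be checked uniformly in~$r$. Once this compatibility is in place, a straightforward induction on $\dim V$ completes the argument, and a direct inspection of the elementary summands shows that the listed invariants indeed suffice to distinguish their isometry classes.
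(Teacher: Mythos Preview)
The paper does not prove this statement: it is quoted as a classical result of Wall \cite{Wall} in the review Section~\ref{section:Wallinvariants}, with no argument supplied. So there is no ``paper's own proof'' to compare against; the theorem functions here as background.

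Your outline follows the standard route to Wall's classification (primary decomposition, hyperbolic pairing of $p$ with $p^\sharp$, Hermitian reduction for palindromials, and a Witt-type extraction for $t\pm 1$), and the overall strategy is sound. One technical point deserves care: the form $(x,y)\mapsto b(x,w^{r-1}y)$ with $w=u-\eta\,\id$ is \emph{not} literally symmetric or skewsymmetric on $\Ker w^r$, because $w^\star=-\eta\,u^{-1}w$ rather than $\pm w$. This is why the paper (and Wall) build the quadratic invariants from the genuinely $b$-selfadjoint operator $v-2\eta\,\id=u+u^{-1}-2\eta\,\id$ (writing $r=2k+1$ and using $(v-2\eta\,\id)^k$), so that the induced form on the quotient is honestly symmetric. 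Your parity dichotomy is correct in spirit, but to make the extraction step rigorous you should either pass to this selfadjoint normalization or check explicitly that the twist by $u^{-(r-1)}$ becomes trivial on the graded piece $V_{t-\eta,r}$. With that adjustment, the recursion you describe goes through.
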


Next, we complete the classification of $-1$-isopairs.
Let $(b,u)$ be such an isopair, and set $v:=u+u^{-1}$.
Let $r=2k+2$ be an even positive integer.
The bilinear form $(x,y) \mapsto b(x,(u-u^{-1})(v-2\,\id_V)^k(y))$ is symmetric.
Noting that $(u-u^{-1})(v-2\id_V)^k=(-1)^k u^{-(k+1)}(u+\id_V) (u-\id_V)^{2k+1}$, one finds that
this bilinear form induces a non-degenerate symmetric bilinear form on the
quotient space $\Ker (u-\id_V)^r/\bigl(\Ker (u-\id_V)^{r-1}+(\im (u-\id_V) \cap \Ker (u-\id_V)^r)\bigr)$,
and we denote this form by $(b,u)_{t-1,r}$.
Likewise, $(x,y) \mapsto b(x,(u-u^{-1})(v+2\id_V)^k(y))$
 induces a non-degenerate symmetric bilinear form on the
quotient space $\Ker (u+\id_V)^r/\bigl(\Ker (u+\id_V)^{r-1}+(\im (u+\id_V) \cap \Ker (u+\id_V)^r)\bigr)$,
and we denote this form by $(b,u)_{t+1,r}$.
These are the \textbf{quadratic Wall invariants} of $(b,u)$.

\begin{theo}[Wall's theorem for symplectic groups, see \cite{Wall}]
Let $(b,u)$ and $(b',u')$ be two $-1$-isopairs.
For $(b,u)$ to be isometric to $(b',u')$, it is necessary and sufficient that:
\begin{enumerate}[(i)]
\item For all $p \in \Irr(\F)$ which is not a palindromial, and all $r \geq 1$, one has $n_{p,r}(u)=n_{p,r}(u')$.
\item For all $p \in \Irr(\F) \setminus \{t+1,t-1\}$ such that $p=p^\sharp$, and all $r \geq 1$,
the skew-Hermitian forms $(b,u)_{p,r}$ and $(b',u')_{p,r}$ are equivalent.
\item For every $\eta \in \{-1,1\}$ and every even integer $r \geq 2$, the non-degenerate symmetric bilinear forms
$(b,u)_{t-\eta,r}$ and $(b',u')_{t-\eta,r}$ are equivalent.
\end{enumerate}
\end{theo}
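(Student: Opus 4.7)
Necessity of (i)--(iii) is immediate: all three families of invariants are constructed intrinsically from $u$-stable subspaces of $V$ and from forms induced by $b$, and hence are preserved under any isometry of isopairs (and behave additively under orthogonal sums). For sufficiency, the plan is to run a primary decomposition with respect to the involution $p \mapsto p^\sharp$ on $\Irr(\F) \cup \{t\}$, followed by a case-by-case classification of the individual blocks.

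For any palindromial $p \neq t$, the generalized eigenspace $V_p := \Ker p(u)^N$ (for $N$ large) is $u$-stable and $b$-regular; and for a non-palindromial pair $\{p,p^\sharp\}$, the spaces $V_p$ and $V_{p^\sharp}$ are totally isotropic, dually paired by $b$, and together span a $b$-regular subspace. Both assertions follow from $u^\star=u^{-1}$ via standard manipulations of adjoint polynomials. Hence $V$ decomposes as a $b$-orthogonal sum of primary blocks, and the theorem reduces to classifying each block separately.

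In a non-palindromial block, $V_p \oplus V_{p^\sharp}$ is entirely determined by the $\F[t]$-module structure of $V_p$, hence by the Jordan numbers $n_{p,r}(u)$, so (i) suffices. In a palindromial block with $p \neq t \pm 1$, I would invoke the extension of scalars of Section \ref{section:extension}: $V_p$ becomes an $\L$-vector space carrying a non-degenerate skew-Hermitian form $b^\L$, and $m(v)$ becomes a nilpotent $b^\L$-selfadjoint endomorphism over $\L$. Multiplying $b^\L$ by a fixed $\eta \in \L^*$ with $\eta^\bullet = -\eta$ reduces the problem to the classical classification of Hermitian forms equipped with a nilpotent selfadjoint operator over $\L$, whose complete invariant is the sequence of Hermitian forms induced on the graded pieces of the $m(v)$-filtration; this sequence translates back to $\bigl((b,u)_{p,r}\bigr)_{r \geq 1}$.

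The remaining unipotent blocks $p = t - \eta$, with $\eta \in \{\pm 1\}$, are the most delicate, since the Wall forms live over $\F$ rather than an extension and one must separate the odd- and even-size Jordan parts. In a symplectic group, the multiplicity of each odd-size Jordan block at $\pm 1$ is automatically even and the corresponding sub-isopair is unique up to isometry once this multiplicity is fixed; the even-size blocks are detected by $(b,u)_{t-\eta, r}$, where the sign choice in the defining formula forces \emph{symmetry} (rather than skew-symmetry) precisely when $r=2k+2$ is even. The main obstacle to the proof is the following splitting lemma: any $x \in \Ker n^r$, with $n := u - \eta\,\id_V$, whose class in the $r$-th graded piece of the $n$-filtration is non-isotropic for $(b,u)_{t-\eta, r}$, generates together with a carefully chosen companion vector in its $\F[u]$-orbit a $b$-regular $u$-stable subspace of standard indecomposable type. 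Iterating this splitting and invoking Witt cancellation for symmetric forms over $\F$ yields a $b$-orthogonal decomposition of $V_p$ that realizes any prescribed orthogonal decomposition of each $(b,u)_{t-\eta,r}$ into rank-one summands, completing the classification.
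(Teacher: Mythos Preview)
The paper does not give its own proof of this statement: it is quoted as a classical result and attributed to Wall \cite{Wall}, with no argument supplied beyond the prior construction of the invariants in Section~\ref{section:Wallinvariants}. So there is no ``paper's proof'' to compare your proposal against.

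That said, your outline follows the standard route one finds in the literature (primary decomposition compatible with $p\mapsto p^\sharp$, trivial classification of the non-palindromial hyperbolic pieces, scalar extension to $\L$ for the palindromial pieces with $p\neq t\pm 1$, and a hands-on splitting argument at $t\pm 1$). Two places deserve more care if you ever turn this into a full proof. First, in the palindromial block you pass from the skew-Hermitian form $b^\L$ to a Hermitian one by multiplying by a fixed skew element $\eta$; you should check that this rescaling is compatible with the specific normalization of $(b,u)_{p,r}$ used in the paper (the linear form $f_p$ and the choice of $e_m$), otherwise the ``translation back'' step is only correct up to a twist. Second, your splitting lemma at $t-\eta$ is asserted but not proved, and it is genuinely the crux: producing a $b$-regular $u$-stable plane (for $r$ even) from a single $B_r$-anisotropic vector requires exhibiting the companion vector explicitly and verifying regularity, and the parallel claim that the odd-$r$ part is rigid (no continuous invariant) also needs an argument. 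None of this is wrong, but as written it is a plan rather than a proof.
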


We finish with a different viewpoint, which is especially useful for our problem: a description of indecomposable pairs.

\begin{theo}\label{theo:indecomposable}
Every indecomposable $1$-isopair (respectively, $-1$-isopair) $(b,u)$ satisfies one of the following properties:
\begin{itemize}
\item $u$ is cyclic and its minimal polynomial equals $p^r$ for some palindromial $p \in \Irr(\F) \setminus \{t \pm 1\}$
and some $r \geq 1$;
\item $u$ has exactly two primary invariants, equal to $p^r$ and $(p^\sharp)^r$ for some
$p \in \Irr(\F)$ such that $p \neq p^\sharp$, and some $r \geq 1$;
\item $u$ is cyclic and its minimal polynomial equals $(t-\eta)^r$ for some odd (respectively, even) integer $r$ and some $\eta =\pm 1$;
\item $u$ has exactly two primary invariants, both equal to $(t-\eta)^r$ for some even (respectively, odd) integer $r$ and some $\eta =\pm 1$.
\end{itemize}
\end{theo}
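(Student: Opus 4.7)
The plan is to prove the theorem by extracting, from any $\varepsilon$-isopair $(b,u)$, orthogonal direct summands of one of the four listed types, so that indecomposability of $(b,u)$ coincides with $(b,u)$ being of one of those types. The argument proceeds in two nested reductions.

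First, primary decomposition. For any polynomial $q$, the adjoint identity $b(q(u) x, y) = b(x, q(u^{-1}) y)$ combined with $q(u^{-1}) = q(0)\, u^{-\deg q}\, q^\sharp(u)$ (valid since $u$ is invertible) shows that the primary component $V_p$ of $V$ associated with $p \in \Irr(\F)$ is $b$-orthogonal to the primary component $V_q$ for every irreducible $q \notin \{p, p^\sharp\}$. Hence $(b,u)$ splits orthogonally into summands supported either on a single $V_p$ with $p = p^\sharp$ (in which case $V_p$ is $b$-regular), or on $V_p \oplus V_{p^\sharp}$ with $p \neq p^\sharp$ (in which case $V_p$ and $V_{p^\sharp}$ are totally isotropic and dually paired by $b$). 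Indecomposability thus restricts the primary invariants of $u$ to one of these two situations.

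Second, further splitting on each primary summand. In the case $p \neq p^\sharp$, I would pick a cyclic generator $x_0 \in V_p$ of minimal polynomial $p^r$ for the largest $r$ appearing; using the perfect pairing $V_p \times V_{p^\sharp} \to \F$ together with the matching of Jordan sizes forced by the similarity between $u$ and $u^{-1}$, I would find a cyclic $y_0 \in V_{p^\sharp}$ of minimal polynomial $(p^\sharp)^r$ such that $\F[u] x_0 \oplus \F[u] y_0$ is a $b$-regular $u$-stable summand of the second listed type, and then iterate. In the case $p = p^\sharp$, I would work with $v := u + u^{-1}$ and the Wall invariants on $V_p$: when $p \neq t \pm 1$, the non-degenerate Hermitian or skew-Hermitian invariant $(b,u)_{p,r}$ has anisotropic classes on every nonzero $V_{p,r}$; likewise, when $p = t \pm 1$ and $r$ has the natural parity (odd for $\varepsilon = 1$, even for $\varepsilon = -1$), the non-degenerate quadratic Wall invariant has anisotropic classes. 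In these subcases, lifting an anisotropic class for the largest $r$ yields a cyclic $b$-regular summand of the first or third listed type. When $p = t \pm 1$ and $r$ has the opposite parity, the natural induced pairing on $\Ker(u \mp \id)^r / \bigl( \Ker(u \mp \id)^{r-1} + \im(u \mp \id) \cap \Ker(u \mp \id)^r \bigr)$ is alternating, hence admits a hyperbolic decomposition, and each hyperbolic plane lifts to a summand of the fourth listed type.

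The main obstacle is the lifting step in the $p = p^\sharp$ case: a naively chosen lift $x_0$ of an anisotropic class may generate a cyclic subspace on which $b$ is degenerate, due to couplings with Jordan blocks of smaller size. The delicate task is to correct $x_0$ by a vector in $\Ker m(v)^{r-1} + \bigl(\im m(v) \cap \Ker m(v)^r\bigr)$ — a correction that leaves its class in $V_{p,r}$ unchanged — so that $\F[u] x_0$ becomes $b$-regular. Once a candidate summand of one of the four listed types is extracted, its indecomposability follows from Wall's classification above: each such summand carries exactly one nonzero Wall invariant (of rank one in the first and third types, of hyperbolic-plane shape in the second and fourth), and by additivity of Wall invariants under orthogonal sums, any nontrivial orthogonal splitting would require at least two nonzero invariants.
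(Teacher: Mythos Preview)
The paper does not prove this theorem: it appears in Section~\ref{section:Wallinvariants}, which is explicitly a \emph{review} of the classification of conjugacy classes in orthogonal and symplectic groups, and the result is attributed to Wall's work \cite{Wall} without argument. So there is no ``paper's own proof'' to compare against; the theorem is quoted as background.

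Your outline is the standard route to this result and is essentially correct in structure: primary decomposition via the $p \leftrightarrow p^\sharp$ orthogonality, then extraction of cyclic or bi-cyclic regular summands using the Wall invariants (or the hyperbolic pairing when no such invariant exists). You have correctly identified the lifting step as the technical crux, and your description of the correction term --- adjusting $x_0$ by something in $\Ker m(v)^{r-1} + \bigl(\im m(v) \cap \Ker m(v)^r\bigr)$ --- is the right shape, though of course this is where the real work lies and you have only sketched it.

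One inaccuracy in your final paragraph: the indecomposability argument via ``exactly one nonzero Wall invariant'' does not apply to the second and fourth types. In type~2 ($p \neq p^\sharp$) no Wall invariant is attached at all, and in type~4 ($(t-\eta)^r$ with the ``wrong'' parity of $r$) no quadratic Wall invariant is defined. The correct reason those summands are indecomposable is more elementary: any nontrivial $u$-stable orthogonal splitting would force one factor to be supported on a single primary component $V_p$ (type~2) or on a single Jordan block of the wrong parity (type~4), and in either case that factor is totally $b$-isotropic, hence not $b$-regular. That said, the theorem as stated only asserts the forward implication (indecomposable $\Rightarrow$ one of the four types), so this part of your sketch is supplementary anyway.
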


\subsection{Main results}\label{section:results}

We start with the case of symplectic groups, which is the easier:

\begin{theo}\label{theo:symplectic}
Let $b$ be a symplectic form over a field of characteristic different from $2$.
Let $u \in \Sp(b)$. The following conditions are equivalent:
\begin{enumerate}[(i)]
\item $u$ is the product of two $U_2$-elements of $\GL(V)$;
\item $u$ is the product of two $U_2$-elements of $\Sp(b)$;
\item $u$ has no Jordan cell of odd size for the eigenvalue $-1$.
\end{enumerate}
\end{theo}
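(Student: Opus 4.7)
The implication (ii)$\Rightarrow$(i) is immediate, and (i)$\Rightarrow$(iii) follows at once from the Wang-Wu-Botha theorem (Theorem~\ref{theo:Botha}), since any product of two $U_2$-elements of $\Sp(b)$ is \emph{a fortiori} a product of two $U_2$-elements of $\GL(V)$. The substantive implication is therefore (iii)$\Rightarrow$(ii).

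The plan for (iii)$\Rightarrow$(ii) is to reduce to the indecomposable case. Since orthogonal sums of $U_2$-splittable isopairs are $U_2$-splittable (as recorded in the remark in Subsection~1.2) and condition (iii) is clearly additive under $\bot$, it suffices to show that every indecomposable $-1$-isopair $(b,u)$ satisfying (iii) is $U_2$-splittable. By Theorem~\ref{theo:indecomposable}, such a pair falls into one of four families: (a) $u$ is cyclic with minimal polynomial $p^r$ for a palindromial $p\in\Irr(\F)\setminus\{t\pm 1\}$; (b) $u$ has two primary invariants $p^r$ and $(p^\sharp)^r$ with $p\neq p^\sharp$; (c) $u$ is cyclic with minimal polynomial $(t-\eta)^r$ for some $\eta\in\{\pm 1\}$ and some \emph{even} $r$; (d) $u$ has two primary invariants both equal to $(t-\eta)^r$ for some $\eta\in\{\pm 1\}$ and some \emph{odd} $r$. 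Condition (iii) rules out only type (d) with $\eta=-1$; all other cases must be addressed.

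For each remaining type, the goal is to construct an explicit splitting $u=v_1v_2$ inside $\Sp(b)$ with $v_1,v_2$ unipotent of index $2$. In types (a) and (b), the element $u$ has no generalised $\pm 1$-eigenvector, and the extension-of-scalars machinery of Subsection~\ref{section:extension} rephrases the problem as a factorisation question for a cyclic endomorphism on a Hermitian or skew-Hermitian space, where the desired splitting can be exhibited directly on a canonical model of the relevant Hermitian Wall invariant. In type (c), working on a cyclic symplectic basis, the evenness of $r$ is precisely what permits writing the nilpotent endomorphism $u-\id$ as $x_1+x_2+x_1x_2$ with $x_1,x_2$ skewselfadjoint and square-zero. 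Type (d) with $\eta=1$ and $r$ odd is handled via the hyperbolic decomposition of the underlying space into two $u$-invariant totally $b$-isotropic subspaces supplied by Theorem~\ref{theo:indecomposable}; on these, the two symplectic $U_2$-factors can be built from paired bases.

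The main obstacle is the symplectic constraint on the factors. The Wang-Wu-Botha theorem yields a $\GL(V)$-splitting essentially for free under (iii), but offers no reason for either factor to lie in $\Sp(b)$; lifting to a symplectic splitting forces the type-by-type construction sketched above. The most delicate case is expected to be type (d) with $\eta=1$ and $r$ odd, where two identical odd Jordan cells at $+1$ on a symplectic space are compatible with $b$ only in the precise hyperbolic configuration prescribed by Theorem~\ref{theo:indecomposable}; once that configuration is made explicit, writing down two symplectic $U_2$-factors whose product equals $u$ becomes a concrete matrix exercise.
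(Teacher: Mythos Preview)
Your overall framework—reduce to indecomposable $-1$-isopairs and treat each type of Theorem~\ref{theo:indecomposable} separately—is exactly what the paper does, and your handling of the easy implications (ii)$\Rightarrow$(i)$\Rightarrow$(iii) is fine. But the execution in the individual cases diverges from the paper and, as written, has a genuine gap.

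The paper does \emph{not} build the factorisations by hand on canonical bases. Instead it relies on two black-box constructions from Section~\ref{section:construction}: the symplectic extension $H_{-1}(v)$ and, more importantly, the \emph{boxed-product} $(H_W^{-1},B\boxtimes C)$ of a pair of non-degenerate \emph{symmetric} forms $(B,C)$. Your type~(d) with $\eta=1$ (two odd cells at~$1$) is handled via $H_{-1}(v)$ for a single cyclic unipotent $v$, together with the Wang--Wu--Botha theorem and Proposition~\ref{prop:hyperbolicexpansionU2split}; this is close in spirit to your ``paired bases'' idea and is in fact the \emph{easiest} case, not the most delicate. Types~(a), (b), (c) are all realised as boxed-products: one chooses $(B,C)$ so that $2\id+L_B^{-1}L_C$ has a prescribed cyclic structure, and then Lemma~\ref{lemma:invariantscrossedprod} gives the invariant factors of $B\boxtimes C$ via the operator $R(m)=t^d m(t+t^{-1})$.

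The gap in your sketch is the Wall-invariant matching in types~(a) and~(c). In both cases the indecomposable pair has a single nontrivial Wall invariant (skew-Hermitian in type~(a), quadratic in type~(c)) on a $1$-dimensional space, and two pairs with the same Jordan structure need not be isometric. Your type~(c) plan (``write $u-\id=x_1+x_2+x_1x_2$ on a cyclic symplectic basis'') produces a splitting for \emph{one} particular $b$ but gives no mechanism to hit an arbitrary prescribed value of $(b,u)_{t-\eta,r}$; the paper handles this by a rescaling trick, noting $\Sp(\lambda b')=\Sp(b')$, so that a single boxed-product model suffices after multiplying $H_W^{-1}$ by the right scalar. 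Your type~(a) plan (``extension-of-scalars rephrases the problem on a Hermitian space'') is more problematic: the machinery of Section~\ref{section:extension} produces the Wall invariant but does \emph{not} turn $\Sp(b)$ into a unitary group over $\L$ in any way that transports $U_2$-factorisations. The paper instead uses the explicit computation of Lemma~\ref{lemma:skewrepresentabledim1}, which shows that the skew-Hermitian Wall invariant of a boxed-product represents $(\overline{1-t})(\overline{1+t})^{-1}\overline{\alpha(t+t^{-1})}$ whenever the underlying quadratic invariant of $(B,C)$ represents $\alpha$; since every nonzero skew-Hermitian element of $\L$ has this shape, one can reverse-engineer $(B,C)$ to match any target Wall invariant. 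Without an analogue of this calculation, your plan for type~(a) does not close.
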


As a corollary, we obtain a full solution to the length problem:

\begin{theo}\label{theo:symplectic3}
Let $b$ be a symplectic form over a field of characteristic different from $2$.
Then every element of $\Sp(b)$ is the product of three unipotent elements of index $2$.
\end{theo}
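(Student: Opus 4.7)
The plan is to use Theorem~\ref{theo:symplectic} as a lever: it suffices to exhibit, for any $u \in \Sp(b)$, a single $U_2$-element $v \in \Sp(b)$ such that $v^{-1}u$ has no Jordan cell of odd size for the eigenvalue $-1$. For then Theorem~\ref{theo:symplectic} yields $U_2$-elements $v_2,v_3$ with $v^{-1}u = v_2 v_3$, whence $u = v \cdot v_2 \cdot v_3$ is the desired product of three (recall $v^{-1}$ is $U_2$ whenever $v$ is).

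To construct such a $v$, I would first decompose $(b,u)$ as an orthogonal sum of indecomposable $-1$-isopairs via Theorem~\ref{theo:indecomposable}. Inspecting the four types in that theorem, the only summands contributing Jordan cells of odd size for the eigenvalue $-1$ are those of the fourth type with $\eta=-1$ (hence $r$ odd): each such ``problematic'' summand lives on a $2r$-dimensional symplectic space on which $u$ has exactly two Jordan cells of size $r$ at $-1$. On every other summand I take the identity as the local contribution to $v$; by Theorem~\ref{theo:symplectic}, the restriction of $u$ to such a summand is already a product of two $U_2$-elements.

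The core of the argument is then to handle a problematic summand $(b',u')$ of dimension $2r$ in isolation. I would seek $v' = I + M' \in \Sp(b')$ with $M'$ square-zero and $b'$-skewselfadjoint, chosen so that $(v')^{-1} u'$ becomes cyclic with minimal polynomial $(t+1)^{2r}$ --- which makes it an indecomposable of the third type in Theorem~\ref{theo:indecomposable} with $\eta = -1$ and even dimension $2r$, to which Theorem~\ref{theo:symplectic} applies. Working in a standard symplectic model of the problematic summand, for instance $u' = A \oplus A^{-T}$ on two copies of $\F^{r}$ paired by the standard symplectic form, with $A = -I_r + N_r$ the single Jordan block at $-1$, one writes down $M'$ explicitly so that multiplication by $I - M'$ ``glues'' the two length-$r$ Jordan cells at $-1$ into a single length-$2r$ cell.

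The main obstacle is precisely this explicit construction of $M'$: one must simultaneously ensure $(M')^2=0$, the $b'$-skewselfadjointness of $M'$, and the cyclicity of $(I-M')u'$ on a space of dimension $2r$. Once it is in hand, the global decomposition is assembled by orthogonal sums of the local factorizations (padding with identity the length-two decompositions to reach length three), using that the orthogonal direct sum of $U_2$-elements is again a $U_2$-element in the direct-sum isometry group.
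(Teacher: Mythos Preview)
Your overall strategy is sound and coincides with the paper's: reduce to indecomposable $-1$-isopairs, observe that only the fourth type with $\eta=-1$ (two Jordan cells of odd size $r$ at $-1$) obstructs Theorem~\ref{theo:symplectic}, and on each such summand seek a single $U_2$-isometry $v'$ so that $(v')^{-1}u'$ has no odd Jordan cell at $-1$.

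The genuine gap is exactly where you locate it: you do not construct $v'$. The paper closes this gap by a route different from the one you sketch, and it is worth knowing. The problematic summand is identified with the symplectic extension $H_{-1}(w)$, where $w$ is a cyclic automorphism of an $r$-dimensional space with minimal polynomial $(t+1)^r$; this is precisely your model $u'=A\oplus A^{-T}$. The key point is that $h$ is multiplicative, so $h(u_1)^{-1}h(w)=h(u_1^{-1}w)$, and $h(u_1)\in\Sp(H_V^{-1})$ is a $U_2$-element whenever $u_1\in\GL(V)$ is. Thus the problem drops to $\GL(V)$ with $\dim V=r$: find a $U_2$-automorphism $u_1$ such that $u_1^{-1}w$ has no odd Jordan cell at $-1$. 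For $r\geq 3$ the paper invokes Lemma~\ref{lemma:uadapted} (a known result on cyclic automorphisms, quoted from \cite{dSPinvol3}) to choose $u_1$ so that $-1$ is not an eigenvalue of $u_1^{-1}w$ at all. The case $r=1$ is handled separately, since $\GL(V)$ then has no nontrivial $U_2$-element; there the paper works directly in $\Sp(b')=\SL_2$.

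Note also that your specific target --- making $(v')^{-1}u'$ \emph{cyclic} with minimal polynomial $(t+1)^{2r}$ --- cannot be reached through the extension $h$: for any $g$, the Jordan numbers of $h(g)$ satisfy $n_{p,k}(h(g))=n_{p,k}(g)+n_{p^\sharp,k}(g)$, hence are even for $p=t+1$, so $h(g)$ is never cyclic. Insisting on that target would therefore force a genuinely ad~hoc construction of $M'$ at the $\Sp$ level, whereas the paper's more modest target (eliminate the eigenvalue $-1$ on the $\GL$ side) is hit by an off-the-shelf lemma. Either target suffices for the argument; the paper's is the one that comes with a ready-made construction.
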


In orthogonal groups, there are much more restrictive conditions for an isometry to be the product of two $U_2$-isometries.
We will split the study into two cases: the one of $1$-isopairs $(b,u)$ with $u$ unipotent (i.e.\ $u-\id$ is nilpotent) and the one of $1$-isopairs $(b,u)$ with $1 \not\in \spec(u)$ (i.e. $u-\id$ is invertible).

To see that such a reduction is relevant, we start with a simple algebraic lemma.

\begin{lemma}[Commutation lemma]\label{lemma:commutation}
Let $u$ be an automorphism of a vector space $V$, and $u_1,u_2$ be $U_2$-automorphisms of $V$ such that $u=u_1u_2$.
Then :
\begin{enumerate}[(i)]
\item $u_1(u-\id)=(u-\id)u_2$ and $u_1^{-1}(u-\id)=(u-\id) u_2^{-1}$.
\item Both $u_1$ and $u_2$ stabilize $\Ker(u-\id)$ and $\im(u-\id)$.
\item Both $u_1$ and $u_2$ commute with $u+u^{-1}$.
\end{enumerate}
\end{lemma}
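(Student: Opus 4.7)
The plan is to reduce everything to the basic identity that any $U_2$-element $s$ satisfies $s^2 = 2s - \id$, equivalently $s + s^{-1} = 2\id$, i.e.\ $s^{-1} = 2\id - s$. Indeed $(s-\id)^2 = 0$ expands to $s^2 - 2s + \id = 0$. In particular $u_1 + u_1^{-1} = u_2 + u_2^{-1} = 2\id_V$. This single relation makes the whole lemma essentially a one-line computation.

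For (i), I would just plug in. The identity $u_1(u-\id) = (u-\id)u_2$ amounts, after using $u = u_1u_2$, to showing $u_1^2 u_2 - u_1 = u_1 u_2^2 - u_2$. Replacing $u_1^2$ by $2u_1 - \id$ on the left and $u_2^2$ by $2u_2 - \id$ on the right, both sides collapse to $2u - u_1 - u_2$. The second identity $u_1^{-1}(u-\id) = (u-\id)u_2^{-1}$ follows from the first by substituting $u_i^{-1} = 2\id - u_i$, or directly by expanding both sides to $u_2 - u_1^{-1}$ and $u_1 - u_2^{-1}$ and invoking $u_1 + u_1^{-1} = u_2 + u_2^{-1}$.

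For (ii), (i) immediately gives that $u_2$ stabilizes $\Ker(u-\id)$ (if $(u-\id)x = 0$ then $(u-\id)u_2 x = u_1(u-\id)x = 0$) and that $u_1$ stabilizes $\im(u-\id)$ (since $u_1(u-\id)z = (u-\id)u_2 z$). The remaining two stability statements follow symmetrically from the second identity of (i): $u_2^{-1}$ stabilizes $\Ker(u-\id)$ (hence so does $u_2$, and since $u$ acts as identity there, $u_1 = u u_2^{-1}$ preserves it), and $u_1^{-1}$ stabilizes $\im(u-\id)$ (then for $y = (u-\id)z$, $u_2 y = u_1^{-1} u(u-\id) z = u_1^{-1}(u-\id)uz \in \im(u-\id)$ since $u$ commutes with $u - \id$).

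For (iii), I would expand $u + u^{-1}$ using $u_2^{-1} u_1^{-1} = (2\id - u_2)(2\id - u_1) = 4\id - 2u_1 - 2u_2 + u_2 u_1$, yielding
\[
u + u^{-1} = u_1 u_2 + u_2 u_1 + 4\id - 2(u_1+u_2).
\]
Then $[u_1, u + u^{-1}]$ reduces to $[u_1, u_1 u_2 + u_2 u_1] - 2[u_1, u_2] = u_1^2 u_2 - u_2 u_1^2 - 2[u_1,u_2]$, and using $u_1^2 = 2u_1 - \id$ the first piece equals $2[u_1, u_2]$, so the total commutator vanishes. The same computation works for $u_2$. The only real care needed is in (ii), where both identities of (i) must be combined; (i) and (iii) are purely algebraic manipulations with no conceptual obstacle.
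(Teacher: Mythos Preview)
Your proof is correct and follows essentially the same approach as the paper. The only difference is cosmetic: the paper introduces the nilpotent parts $a_i := u_i - \id$ and computes with those (obtaining e.g.\ $u+u^{-1} = 2\id + a_1a_2 + a_2a_1$), whereas you work directly with the $u_i$ via the relation $u_i^2 = 2u_i - \id$; these are the same computation under the substitution $a_i \leftrightarrow u_i - \id$, and the logical structure of (i)--(iii) is identical.
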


\begin{proof}
We write $u_1=\id+a_1$ and $u_2=\id+a_2$, so that $a_1^2=a_2^2=0$.
Note that $u_1^{-1}=\id-a_1$ and $u_2^{-1}=\id-a_2$.
Noting that $u-\id=a_1+a_2+a_1a_2$ we find $a_1(u-\id)=a_1a_2=(u-\id)a_2$,
which yields point (i).

By point (i), both $u_2$ and $u_2^{-1}$ stabilize $\Ker(u-\id)$, and both $u_1$ and $u_1^{-1}$ stabilize $\im(u-\id)$.
It follows that $u_2=u_1^{-1} u$ stabilizes $\im(u-\id)$ and that $u_1=u u_2^{-1}$ stabilizes $\Ker(u-\id)$.

Noting that $u+u^{-1}=2\id+a_1a_2+a_2a_1$, we have $a_1(u+u^{-1})=2a_1+a_1a_2a_1=(u+u^{-1})a_1$.
Hence $u_1=\id+a_1$ commutes with $u+u^{-1}$.
Finally, $u_2=u_1^{-1} u$ also commutes with $u+u^{-1}$.
\end{proof}

\begin{cor}[Stabilization Lemma]\label{cor:stabilizationlemma}
Let $u$ be an automorphism of a vector space $V$, and $u_1,u_2$ be $U_2$-automorphisms of $V$ such that $u=u_1u_2$.
Then $\im (u-\id)^k$ and $\Ker (u-\id)^k$ are stable under $u_1$ and $u_2$ for every integer $k \geq 0$.
\end{cor}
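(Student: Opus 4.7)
The plan is to prove the result by induction on $k$, with the cases $k=0$ (trivial) and $k=1$ (which is precisely point (ii) of the Commutation Lemma) as base cases. For the inductive step, I would rely on two complementary observations.

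The first is a symmetry trick. For any subspace $W \subseteq V$ that is stable under $u$ (and hence, by finite-dimensionality, under $u^{-1}$ as well), the identity $u_1 = u u_2^{-1}$ shows that $W$ is stable under $u_1$ if and only if it is stable under $u_2$. Since every $\Ker(u-\id)^j$ and $\im(u-\id)^j$ is obviously $u$-stable, it therefore suffices to prove stability under just one of $u_1,u_2$ at each step of the induction.

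The second observation is to leverage the intertwining relation $u_1(u-\id)=(u-\id)u_2$ from point (i) of the Commutation Lemma. For the kernel part, given $y \in \Ker(u-\id)^{k+1}$, one notes that $(u-\id)y \in \Ker(u-\id)^k$; by the induction hypothesis this element is sent into $\Ker(u-\id)^k$ by $u_1$, and rewriting $u_1(u-\id)y=(u-\id)u_2 y$ yields $u_2 y \in \Ker(u-\id)^{k+1}$. For the image part, one takes any $z \in V$ and writes $u_1(u-\id)^{k+1}z=(u-\id)\bigl[u_2(u-\id)^k z\bigr]$: the bracketed vector lies in $\im(u-\id)^k$ by the induction hypothesis, so the whole expression lies in $\im(u-\id)^{k+1}$.

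The main obstacle I anticipate is the apparent asymmetry of the Commutation Lemma in $u_1$ and $u_2$: its intertwining relation alone yields only $u_2$-stability for $\Ker(u-\id)^{k+1}$ and only $u_1$-stability for $\im(u-\id)^{k+1}$. The symmetry trick of the second paragraph is what bypasses this issue cleanly, without forcing one to hunt down an additional intertwining relation of the form $u_2(u-\id)=(u-\id)\cdot(\cdots)$.
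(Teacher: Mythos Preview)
Your proof is correct, and it takes a genuinely different route from the paper's. The paper does not proceed by induction on $k$ via the intertwining relation. Instead, it exploits point (iii) of the Commutation Lemma: since $u_1$ and $u_2$ commute with $u+u^{-1}$, and since $(u+u^{-1}-2\id)^k=u^{-k}(u-\id)^{2k}$, one gets stability of $\Ker(u-\id)^{2k}$ and $\im(u-\id)^{2k}$ for all $k$ in one stroke. The odd exponents are then handled by restricting to $\im(u-\id)$ (for images) and passing to the quotient $V/\Ker(u-\id)$ (for kernels), in each case reapplying the even result to the induced automorphisms.

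Your argument is more elementary: it uses only points (i) and (ii) of the Commutation Lemma and never touches the commutation with $u+u^{-1}$. The symmetry trick (stability under $u_1$ iff under $u_2$, for $u$-stable subspaces) is a nice device that the paper does not isolate explicitly. On the other hand, the paper's route foreshadows how central the selfadjoint operator $v=u+u^{-1}$ is to the rest of the article (it underlies the construction of the Wall invariants), so there is some expository value in stressing that $u_1,u_2$ commute with $v$ early on.
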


Note that the stabilization of $\Ker (u-\id)^k$ can be derived from corollary 4.5 in \cite{dSPprodexceptional}.

\begin{proof}
Note first that $u+u^{-1}-2\id=u^{-1}(u-\id)^2$. Let $k \geq 0$. Then
$u^{-k} (u-\id)^{2k}=(u+u^{-1}-2\id)^k=(u-\id)^{2k}\,u^{-k}$, and hence
$\im (u-\id)^{2k}=\im (u+u^{-1}-2\id)^k$ and $\Ker (u-\id)^{2k}=\Ker (u+u^{-1}-2\id)^k$.
Since $u_1$ and $u_2$ commute with $u+u^{-1}$, we deduce that they stabilize $\im (u-\id)^{2k}$ and $\Ker (u-\id)^{2k}$.

Next, we already know from point (ii) of Lemma \ref{lemma:commutation} that $u_1$ and $u_2$
stabilize $\im(u-\id)$, and of course so does $u$. Their respective restrictions $u'_1,u'_2,u'$ to $\im u$ satisfy
$u'=u'_1u'_2$, and $u'_1$ and $u'_2$ are $U_2$-automorphisms of $\im(u-\id)$. By the first step, we obtain that, for every integer $k \geq 0$,
the endomorphisms $u'_1$ and $u'_2$ stabilize $\im(u'-\id)^{2k}=\im(u-\id)^{2k+1}$, which means that
$u_1$ and $u_2$ stabilize $\im(u-\id)^{2k+1}$.

Finally, $u_1$, $u_2$ and $u$ all stabilize $\Ker(u-\id)$ and hence they induce respective endomorphisms
$\overline{u_1}$, $\overline{u_2}$ and $\overline{u}$ of the quotient vector space $V/\Ker(u-\id)$,
with $\overline{u}=\overline{u_1}\,\overline{u_2}$. Note that $\overline{u_1}$ and $\overline{u_2}$ are $U_2$-automorphisms.
Letting $k \geq 0$ be a positive integer, we obtain that $\overline{u_1}$ and $\overline{u_2}$ stabilize
$\Ker(\overline{u}-\id)^{2k}=\Ker(u-\id)^{2k+1}/\Ker(u-\id)$, which yields that $u_1$ and $u_2$ stabilize $\Ker(u-\id)^{2k+1}$.
\end{proof}

Now, consider a $U_2$-splittable $1$-isopair $(b,u)$, together with
a pair $(u_1,u_2)$ of $U_2$-isometries such that $u=u_1u_2$. Denote by $V$ the underlying vector space.
We introduce the Fitting decomposition of $u-\id$, consisting of
$$\Co(u-\id):=\underset{n \in \N}{\bigcap} \im (u-\id)^n \quad \text{and} \quad 
\Nil(u-\id):=\underset{n \in \N}{\bigcup} \Ker (u-\id)^n.$$
Remember that $V=\Co(u-\id) \oplus \Nil(u-\id)$ and that $u$ induces an automorphism of $\Co(u-\id)$
of which $1$ is no eigenvalue, and a unipotent automorphism of $\Nil(u-\id)$.
Since $u$ is a $b$-isometry, we have, for all $k \geq 0$,
$$(\im (u-\id)^k)^{\bot_b}=\Ker ((u-\id)^k)^\star=\Ker (u^{-1}-\id)^k=\Ker (u-\id)^k,$$
and hence $\Nil(u-\id)$ is $b$-orthogonal to $\Co(u-\id)$.
It follows from the Stabilization Lemma
that both $u_1$ and $u_2$ stabilize $\Co(u-\id)$ and $\Nil(u-\id)$.
By Remark \ref{remark:inducedpairs}, this yields that the induced pairs $(b,u)^{\Nil(u-\id)}$ and $(b,u)^{\Co(u-\id)}$ are $U_2$-splittable.

Conversely, if we start from a $1$-isopair $(b,u)$ such that both induced pairs $(b,u)^{\Nil(u-\id)}$ and $(b,u)^{\Co(u-\id)}$ are $U_2$-splittable, then so is
$(b,u)$ since it is isometric to $(b,u)^{\Nil(u-\id)} \bot (b,u)^{\Co(u-\id)}$.

\begin{prop}\label{prop:reductionto2cases}
Let $(b,u)$ be a $1$-isopair. For $(b,u)$ to be $U_2$-splittable,
it is necessary and sufficient that both $(b,u)^{\Co(u-\id)}$ and $(b,u)^{\Nil(u-\id)}$ be $U_2$-splittable.
\end{prop}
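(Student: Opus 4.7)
The discussion preceding the statement has already furnished almost all the ingredients, so my plan is mainly to organize them cleanly.

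For the necessary condition, I would start from a $U_2$-splitting $u=u_1u_2$ with $u_1,u_2$ $U_2$-isometries of $b$. The Stabilization Lemma (Corollary \ref{cor:stabilizationlemma}) immediately gives that $u_1$ and $u_2$ stabilize every $\Ker(u-\id)^k$ and every $\im(u-\id)^k$, and therefore the Fitting subspaces
$$\Nil(u-\id)=\bigcup_{k\in\N}\Ker(u-\id)^k \quad\text{and}\quad \Co(u-\id)=\bigcap_{k\in\N}\im(u-\id)^k.$$
Since both subspaces are stable under $u_1$ and $u_2$, Remark \ref{remark:inducedpairs} delivers the $U_2$-splittability of the induced isopairs $(b,u)^{\Nil(u-\id)}$ and $(b,u)^{\Co(u-\id)}$.

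For the sufficient condition, the only thing I have to verify carefully is that the two induced pairs really are orthogonal summands of $(b,u)$, in order to invoke the ``orthogonal sum'' remark that precedes Remark \ref{remark:inducedpairs}. This rests on the identity
$$(\im (u-\id)^k)^{\bot_b}=\Ker (u-\id)^k\qquad(k\geq 0)$$
(already recorded in the text, via $(u-\id)^\star=u^\star-\id=u^{-1}-\id$ which has the same kernel as $u-\id$). Taking intersections and unions in $k$, this yields
$$\Nil(u-\id)^{\bot_b}=\Co(u-\id),$$
so both Fitting subspaces are $b$-regular and mutually $b$-orthogonal. It follows that
$$(b,u)\simeq (b,u)^{\Nil(u-\id)}\;\bot\;(b,u)^{\Co(u-\id)}.$$
If both summands are $U_2$-splittable, the orthogonal-sum remark gives the $U_2$-splittability of $(b,u)$.

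There is no real obstacle here: the proof is essentially a book-keeping exercise that assembles the Stabilization Lemma, the behaviour of $\bot_b$ on $(u-\id)$-invariant subspaces, Remark \ref{remark:inducedpairs}, and the orthogonal-sum remark. The only subtlety worth mentioning is that the sufficiency direction truly needs the $b$-orthogonality of the Fitting components; without it one would only obtain the direct sum decomposition as an isopair and not as an orthogonal sum, and the ``sum'' remark would not apply.
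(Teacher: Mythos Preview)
Your proposal is correct and follows essentially the same approach as the paper: the paper's proof is precisely the discussion that precedes the proposition, using the Stabilization Lemma and Remark \ref{remark:inducedpairs} for necessity, and the $b$-orthogonality of the Fitting components together with the orthogonal-sum remark for sufficiency. Your write-up simply organizes that discussion, with no substantive difference in strategy or in the lemmas invoked.
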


This proposition allows us to separate the study into the one of the $1$-isopairs $(b,u)$ in which $u-\id$ is invertible, and the one
of the $1$-isopairs $(b,u)$ in which $u$ is unipotent.
We start with the former, as it is substantially easier.

\begin{theo}\label{theo:orthogonal1noeigenvalue}
Let $(b,u)$ be a $1$-isopair. Assume that $u-\id$ is invertible.
For $(b,u)$ to be $U_2$-splittable, it is necessary and sufficient that all the following conditions hold:
\begin{enumerate}[(i)]
\item $u$ has no Jordan cell of odd size for the eigenvalue $-1$.
\item All the Jordan numbers of $u$ are even.
\item All the Wall invariants of $(b,u)$ are hyperbolic.
\end{enumerate}
\end{theo}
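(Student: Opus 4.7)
My plan is to establish the two implications separately. For necessity, I start from a decomposition $u = u_1 u_2$ with $u_1, u_2$ being $U_2$-isometries of $b$. Condition (i) follows immediately from Theorem \ref{theo:Botha} applied in $\GL(V)$. To obtain (ii) and (iii), I fix a palindromial $p \in \Irr(\F) \setminus \{t \pm 1\}$ with companion polynomial $m$, and an integer $r \geq 1$. By the Stabilization Lemma, both $u_1$ and $u_2$ commute with $v := u + u^{-1}$ and hence stabilize $\Ker m(v)^k$ and $\im m(v)^k$ for every $k$; they therefore descend to $\F$-linear $U_2$-automorphisms $\overline{u_1}, \overline{u_2}$ of $V_{p, r}$ that are isometries of the induced $\F$-bilinear form $\overline{b_{p, r}}$. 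Writing $\overline{u_i} = \id + \overline{N_i}$, the maps $\overline{N_i}$ are square-zero, $\overline{b_{p, r}}$-skewselfadjoint, and $\K$-linear (where $\K \subset \L$ is the fixed subfield of the involution $\bullet$). Using the relation $\overline{u_1}\overline{u_2} = \overline{u}$ together with the invertibility of $\overline{u} - \id$ on $V_{p, r}$ (which acts as multiplication by $\bar t - 1 \in \L^*$), I will deduce that $V_{p, r} = \im \overline{N_1} \oplus \im \overline{N_2}$ as a $\K$-vector space decomposition, with both summands being $\K$-Lagrangian for $\overline{b_{p, r}}$.

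The main obstacle will then be to promote this $\K$-Lagrangian decomposition to an $\L$-Lagrangian for the Hermitian form $(b, u)_{p, r}$: the subspace $\im \overline{N_i}$ is generally not $\L$-stable, and its naive $\L$-span is not totally isotropic, so I will need a refined construction exploiting both the sesquilinearity of $(b, u)_{p, r}$ and the $\bar t$-action on $V_{p, r}$, along lines parallel to the additive analysis of \cite{dSPsquarezeroquadratic}. Once hyperbolicity of $(b, u)_{p, r}$ is secured, evenness of $n_{p, r}(u) = \dim_{\L} V_{p, r}$ follows, which yields (ii) at each palindromial prime different from $t \pm 1$. A parallel argument at $p = t + 1$, using $u + \id$ in place of $m(v)$, handles the Jordan numbers $n_{t+1, r}(u)$ for $r$ even (those for $r$ odd vanish by (i), and those at $p = t - 1$ vanish since $u - \id$ is invertible).

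For sufficiency, I will use Wall's theorem together with Theorem \ref{theo:indecomposable} to decompose $(b, u)$ orthogonally into \emph{hyperbolic pair-blocks}, each isometric to some $(b_H, w \oplus (w^\top)^{-1})$ on $W \oplus W^*$, where $b_H\bigl((x, f), (y, g)\bigr) := f(y) + g(x)$ is the standard hyperbolic symmetric form and $w \in \GL(W)$ (with $w^\top$ its transpose, acting on $W^*$). Condition (ii) lets me pair up both the palindromial-type and non-palindromial-type indecomposables of Theorem \ref{theo:indecomposable} at each $(p, r)$; condition (iii) guarantees that the palindromial pairing can be chosen so that the combined Hermitian Wall invariant is hyperbolic; condition (i) excludes the cyclic indecomposables with minimal polynomial $(t + 1)^r$ for odd $r$ (those with minimal polynomial $(t - 1)^r$ being excluded by the hypothesis that $u - \id$ is invertible); the ``doubled'' indecomposables with primary invariants $(t + 1)^r$ for even $r$ are already hyperbolic pair-blocks. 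In each pair-block the assembled $w$ is similar to $w^{-1}$ (by the symmetric grouping of primary invariants) and has no Jordan cell of odd size at $-1$ (by (i)), so Theorem \ref{theo:Botha} furnishes a factorization $w = w_1 w_2$ in $\GL(W)$ with $w_1, w_2$ being $U_2$-automorphisms. A direct check then shows that $u_i := w_i \oplus (w_i^\top)^{-1}$ is a $U_2$-isometry of $b_H$ and that $u_1 u_2 = w \oplus (w^\top)^{-1}$, so assembling block-by-block produces the required $U_2$-splitting of $u$.
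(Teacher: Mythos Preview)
Your sufficiency argument is essentially the paper's: the hyperbolic pair-block $(b_H, w \oplus (w^\top)^{-1})$ is exactly the hyperbolic extension $H_1(w)$ of Section~\ref{expansionsection}, and the paper likewise builds a single $v$ with half the Jordan numbers, checks $(b,u)\simeq H_1(v)$ via Wall's theorem, and lifts a Wang--Wu--Botha factorization through Proposition~\ref{prop:hyperbolicexpansionU2split}. So that direction is fine.

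The necessity direction, however, has a real gap that you yourself flag. You correctly obtain a decomposition $V_{p,r}=\im\overline{N_1}\oplus\im\overline{N_2}$ into $\K$-stable, $\overline{b_{p,r}}$-Lagrangian halves, but this is isotropy for the $\F$-bilinear form $\overline{b_{p,r}}$, not for the Hermitian form $(b,u)_{p,r}$. The two are related by $\overline{b_{p,r}}(x,y)=f_p\bigl((b,u)_{p,r}(x,y)\bigr)$, and vanishing of the left side for all $y$ in a $\K$-subspace says nothing about $(b,u)_{p,r}(x,y)$ unless the subspace is $\L$-stable --- which, as you note, $\im\overline{N_i}$ generally is not. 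There is no routine ``promotion'' here: a $\K$-Lagrangian for the trace form of a Hermitian form does not force hyperbolicity of the Hermitian form, and the vague appeal to \cite{dSPsquarezeroquadratic} does not supply the missing idea. A second, smaller omission: you never treat the evenness of $n_{p,r}(u)$ for non-palindromial $p$, and this is not automatic (indecomposable $1$-isopairs of type $p^r,(p^\sharp)^r$ have odd Jordan number~$1$ at $(p,r)$).

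The paper bypasses both difficulties by a structural move you do not attempt: Proposition~\ref{prop:caracboxedprod} shows that any $U_2$-splittable $1$-isopair with $u-\id$ invertible is isometric to a boxed-product $(H_W^1, B\boxtimes C)$ of two \emph{symplectic} forms $B,C$. The invariants of such boxed-products are then computed directly (Lemmas~\ref{lemma:jordannumbersboxedprod} and~\ref{lemma:hyperbolicboxedprod}): evenness of all Jordan numbers comes from the Scharlau pairing of invariant factors of $L_B^{-1}L_C$, and hyperbolicity of the Hermitian Wall invariants is shown by exhibiting an explicit isotropic vector of the form $(x,0)\in V\times V^\star$. This approach never needs to descend the $U_2$-factors to $V_{p,r}$ or wrestle with the $\K$/$\L$ mismatch; it trades your induced-endomorphism analysis for a single global normal form.
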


We now turn to unipotent automorphisms, for which the condition on the (quadratic) Wall invariants is far more technical.

\begin{Def}
A $1$-isopair $(b,u)$ is called \textbf{unipotent} whenever $u$ is unipotent.
\end{Def}

Let $B$ be a non-degenerate symmetric bilinear form (on a finite-dimensional vector space $V$). Remember that
$B$ is the orthogonal sum of a hyperbolic symmetric bilinear form and of a
nonisotropic symmetric bilinear form; the rank of the hyperbolic form equals $2\nu(B)$, where $\nu(B)$
is the \textbf{Witt index} of $B$, i.e.\ the greatest dimension for a totally $B$-isotropic subspace of $V$; the
equivalence ``class" of the nonisotropic form depends only on the one of $B$, and we say that this nonisotropic form is a
\textbf{nonisotropic part} of $B$.
Finally, a \textbf{subform} of $B$ is simply the restriction of $B$ to $W^2$ for some linear subspace $W$ of $V$.

\begin{Def}
Let $B$ and $B'$ be non-degenerate symmetric bilinear forms (on finite-dimensional vector spaces over $\F$).
The following conditions are equivalent:
\begin{itemize}
\item Every nonisotropic part of $B'$ is equivalent to a subform of $-B$;
\item One has $\nu(B')+\nu(B' \bot B) \geq \rk(B')$.
\end{itemize}
When they are satisfied, we say that $B$ \textbf{Witt-simplifies} $B'$.
\end{Def}

See section 1.5 of \cite{dSPsquarezeroquadratic} for the proof that these conditions are equivalent.

\begin{theo}\label{theo:orthogonalunipotent}
Let $(b,u)$ be a unipotent $1$-isopair.
For an integer $k \geq 0$, denote by $B_k$ the quadratic Wall invariant $(b,u)_{t-1,2k+1}$.
The following conditions are equivalent:
\begin{enumerate}[(i)]
\item $u$ is the product of two $U_2$-elements of $\Ortho(b)$.
\item For every integer $k \geq 0$, $(-1)^k B_k$ Witt-simplifies $\underset{i >k}{\bot}(-1)^i B_i$.
\end{enumerate}
\end{theo}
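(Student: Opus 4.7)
The plan is to mirror, in the multiplicative setting, the argument used in \cite{dSPsquarezeroquadratic} for sums of two square-zero $b$-skewselfadjoint operators. Two technical ingredients will recur: the Commutation and Stabilization Lemmas, which ensure that any $U_2$-splitting $u = u_1 u_2$ (with $u_i = \id + a_i$, $a_i^2 = 0$, $a_i^\star = -a_i$) is compatible with the filtrations $(\Ker(u-\id)^m)_m$ and $(\im(u-\id)^m)_m$; and the fact that each $u_i$ commutes with $v := u+u^{-1}$, which lets one transfer part of the analysis from $u - \id$ to the $b$-selfadjoint nilpotent operator $v - 2\,\id$.

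For the necessity direction, I start from a splitting $u = u_1 u_2$ and fix $k \geq 0$, $r := 2k+1$. The Stabilization Lemma ensures that $u_1$ and $u_2$ stabilize both $\Ker(u-\id)^r$ and its $b$-radical, hence induce $U_2$-elements $\overline{u_1}$ and $\overline{u_2}$ of the isometry group of $B_k$ on the quotient space $\Ker(u-\id)^r / \bigl(\Ker(u-\id)^{r-1} + \im(u-\id) \cap \Ker(u-\id)^r\bigr)$. The image of the induced square-zero operator $\overline{a_1}$ is then totally isotropic for $B_k$. The Commutation Lemma identity $a_1(u-\id) = (u-\id) a_2$ propagates this picture across levels: it produces isometric embeddings from pieces of the higher-level forms $B_j$ ($j > k$) into the anisotropic part of $-(-1)^k B_k$, and summing the resulting dimensions yields the Witt-simplification condition.

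For the sufficiency direction, I use Theorem \ref{theo:indecomposable} to reduce $(b,u)$ to an orthogonal sum of indecomposables of two types: a single Jordan block of odd size (type A), carrying a $1$-dimensional quadratic Wall invariant at its natural level; and two equal Jordan blocks of even size (type B), contributing nothing to the $B_k$'s. Type B isopairs are $U_2$-splittable by an explicit construction in a standard Jordan basis, using the free second block to realize the splitting. For type A blocks, the Witt-simplification condition tells us exactly how to group them: two type A pieces of the same odd size $2k+1$ with Wall invariants of opposite sign combine into a $U_2$-splittable isopair, while a residual type A piece at a level $j > k$ whose Wall invariant $(-1)^j B_j$ is anisotropic can be absorbed into a type A piece at some lower level $k$ whose Wall invariant orientation is compatible, producing a $U_2$-splittable mixed block. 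The explicit splittings are then built in bases matching the combined Jordan structure.

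The main obstacle is the explicit construction of $U_2$-splittings for the mixed blocks on the sufficiency side, namely the pieces formed by one Jordan block of odd size $2k+1$ and one of strictly larger odd size $2j+1$ with suitably oriented Wall invariants. The challenge is to write down two square-zero $b$-skewselfadjoint operators $a_1, a_2$ whose associated product $(\id + a_1)(\id + a_2) = \id + a_1 + a_2 + a_1 a_2$ recovers the target isometry, which requires a delicate choice of two matched Jordan bases and careful bookkeeping of signs. The necessity direction's parallel difficulty is to correctly track how $a_1$ shifts vectors across several filtration levels and convert this into the recursive cascade pattern of the Witt-simplification condition; this is exactly the combinatorial heart shared with \cite{dSPsquarezeroquadratic}.
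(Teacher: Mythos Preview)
Your proposal has real gaps on both sides, and the paper's route differs substantially from what you sketch.

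\textbf{Necessity.} Your plan to work directly at each level $k$ does not carry the argument. It is true that $u_1,u_2$ descend to $U_2$-isometries of $B_k$ (since they commute with $v$), but on that quotient $u$ induces the identity, so $\overline{u_2}=\overline{u_1}^{-1}$ and the only information you extract is that $\im\overline{a_1}$ is totally $B_k$-isotropic. That bounds $\nu(B_k)$ from below by $\rk\overline{a_1}$, which says nothing about the higher $B_j$. Your sentence ``propagates this picture across levels: it produces isometric embeddings from pieces of the higher-level forms $B_j$ into the anisotropic part of $-(-1)^kB_k$'' is not a proof; the identity $a_1(u-\id)=(u-\id)a_2$ does not by itself produce any such embedding. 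The paper does \emph{not} argue directly at level $k$. Instead it first reduces to the case $(u-\id)^3=0$ by an induction on nilindex using three operations on $(b,u)$ (descent to $(b,u)^{\im(u-\id)}$, twisted descent to $(b,u)^{\Ker(u-\id)+\im(u-\id)}$, and folding to $(b,u)^{\Ker(u-\id)^k}$), each of which has a precisely computed effect on the $B_k$'s. The base case $(u-\id)^3=0$ is itself a five-step argument: one reduces to $(b,u)$ indecomposable among $U_2$-splittable pairs with $B_3$ anisotropic, shows by contradiction that $a$ must map $V$ into $\Ker v$ and $\Ker v$ into $\Ker(u-\id)+\im(u-\id)$, and only then builds a linear map whose rank yields $\nu(B_3)+\nu(B_1\bot(-B_3))\geq\rk B_3$. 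None of this structure appears in your sketch.

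\textbf{Sufficiency.} Your decomposition into types A and B is right in spirit, and the paper handles type B (even-size pairs) via hyperbolic extensions as you suggest. But for type A you propose to absorb a block of odd size $2j+1$ into one of strictly smaller odd size $2k+1$ for \emph{arbitrary} $j>k$, and you correctly flag the explicit construction of such mixed $U_2$-splittings as the main obstacle. The paper sidesteps this entirely: it only ever needs the adjacent case $j=k+1$, realized by an explicit $4k$-dimensional construction (two cells of sizes $2k+1$ and $2k-1$ with equal Wall invariants). The reason adjacent levels suffice is an induction from the top: at the largest level $\ell$, either $B_\ell$ is isotropic and one peels off a hyperbolic rank-$2$ piece, or $B_\ell$ is anisotropic and the Witt-simplification at level $\ell-1$ gives $B_{\ell-1}\simeq B_\ell\bot\varphi$, so one pairs every size-$(2\ell+1)$ cell with a size-$(2\ell-1)$ cell and recurses on a smaller pair that still satisfies condition (ii). Your arbitrary-gap construction is both harder and unnecessary.
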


By combining the previous two theorems thanks to Proposition \ref{prop:reductionto2cases}, we obtain the complete classification of
the elements of an orthogonal group that are products of two $U_2$-isometries.

\begin{theo}\label{theo:orthogonal}
Let $(b,u)$ be a $1$-isopair.
For $k \geq 0$, denote by $B_k$ the quadratic Wall invariant $(b,u)_{t-1,2k+1}$.
For $u$ to be the product of two unipotent elements of index $2$ in $\Ortho(b)$, it is necessary and sufficient that all the following
conditions hold:
\begin{enumerate}[(i)]
\item $u$ has no Jordan cell of odd size for the eigenvalue $-1$.
\item The Jordan number $n_{p,r}(u)$ is even for all $p \in \Irr(\F) \setminus \{t-1\}$ and all $r \geq 1$.
\item All the Hermitian Wall invariants of $(b,u)$ are hyperbolic.
\item For every integer $k \geq 0$, the non-degenerate symmetric bilinear form
$(-1)^k B_k$ Witt-simplifies $\underset{i >k}{\bot}(-1)^i B_i$.
\end{enumerate}
\end{theo}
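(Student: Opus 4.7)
The plan is to deduce Theorem \ref{theo:orthogonal} from Theorems \ref{theo:orthogonal1noeigenvalue} and \ref{theo:orthogonalunipotent} via Proposition \ref{prop:reductionto2cases}. Writing $(b,u) \simeq (b,u)^{\Co(u-\id)} \bot (b,u)^{\Nil(u-\id)}$, the pair $(b,u)$ is $U_2$-splittable if and only if both summands are. So the task reduces to translating the conditions yielded by each sub-theorem into conditions on the invariants of $(b,u)$.

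First I would record how the classical invariants decompose with respect to the Fitting splitting. Since $u-\id$ is invertible on $\Co(u-\id)$ and nilpotent on $\Nil(u-\id)$, all primary invariants of $u$ equal to a power of $t-1$ live on the $\Nil$ summand, while all other primary invariants, and in particular every Jordan cell for the eigenvalue $-1$, live on the $\Co$ summand. Accordingly, the Jordan numbers $n_{p,r}(u)$ for $p \neq t-1$ and all Hermitian Wall invariants $(b,u)_{p,r}$ (for palindromials $p\neq t\pm 1$) are carried entirely by $(b,u)^{\Co(u-\id)}$, while the quadratic Wall invariants $(b,u)_{t-1,2k+1}=B_k$ are carried entirely by $(b,u)^{\Nil(u-\id)}$. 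As for the quadratic Wall invariants $(b,u)_{t+1,2k+1}$, they live on $(b,u)^{\Co(u-\id)}$ and vanish as soon as $n_{t+1,2k+1}(u)=0$.

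Next I would apply Theorem \ref{theo:orthogonal1noeigenvalue} to $(b,u)^{\Co(u-\id)}$: its splittability is equivalent to $u$ having no Jordan cell of odd size for $-1$ (condition (i)), to all Jordan numbers of $u$ with $p\neq t-1$ being even (condition (ii)), and to all Wall invariants of the $\Co$ summand being hyperbolic. Thanks to the previous paragraph, condition (i) forces the invariants $(b,u)_{t+1,2k+1}$ to vanish, hence to be hyperbolic for free, and the only remaining Wall invariants are the Hermitian ones, giving condition (iii). Applying Theorem \ref{theo:orthogonalunipotent} to $(b,u)^{\Nil(u-\id)}$ yields precisely condition (iv), since the $B_k$ of $(b,u)$ and of its $\Nil$ summand coincide.

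There is essentially no obstacle; the proof amounts to the bookkeeping above, and the only point that needs care is the additivity of Jordan numbers and of Wall invariants with respect to the $b$-orthogonal direct sum $V = \Co(u-\id) \oplus \Nil(u-\id)$, which is immediate from their construction in Section \ref{section:Wallinvariants} together with the fact that $\Co(u-\id)$ and $\Nil(u-\id)$ are stable under $u$ and mutually $b$-orthogonal.
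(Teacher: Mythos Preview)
Your proposal is correct and follows exactly the approach indicated in the paper, which simply states that Theorem \ref{theo:orthogonal} is obtained by combining Theorems \ref{theo:orthogonal1noeigenvalue} and \ref{theo:orthogonalunipotent} via Proposition \ref{prop:reductionto2cases}. Your write-up is in fact more detailed than the paper's one-line justification, correctly handling the bookkeeping that distributes the Jordan numbers and the various Wall invariants between the $\Co$ and $\Nil$ summands, including the observation that condition (i) forces the quadratic Wall invariants $(b,u)_{t+1,2k+1}$ to vanish.
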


\subsection{Structure of the article}

The remainder of the article is laid out as follows.
In Section \ref{section:construction}, we develop two main ways to construct interesting elements of orthogonal and symplectic groups.
The simpler is the first, which we call the hyperbolic/symplectic extensions of an automorphism.
A more subtle construction is actually required: the boxed-product of a pair of symmetric/skewsymmetric bilinear forms over the same vector space. It turns out (Proposition \ref{prop:caracboxedprod}) that every $U_2$-splittable pair $(b,u)$, where $u-\id$ is invertible,
is isometric to such a boxed-product. And conversely, every boxed-product is $U_2$-splittable.
This motivates a thorough study of boxed-products and their invariants (Jordan numbers and Wall invariants), which is performed in Section \ref{section:invariantsboxedproduct}.

Using these two constructions, we complete the study for symplectic groups in Section \ref{section:symplectic}, and the case of elements of orthogonal groups that do not have $1$ as eigenvalue, in Section \ref{section:orthogonalnonunipotent}. In Section \ref{section:symplectic}, we also obtain that every element of a symplectic group is the product of three $U_2$-elements.

However, for orthogonal groups the case of unipotent automorphisms is very difficult, and none of the standard constructions
is sufficient to completely account for it. This difficult case is dealt with in Section \ref{section:unipotent} by
adapting the proof of the corresponding theorem for the decomposition of a skewselfadjoint endomorphism into the sum of two square-zero
skewselfadjoint endomorphisms (see section 5 of \cite{dSPsquarezeroquadratic}).

\section{Two key constructions}\label{section:construction}

\subsection{Review of duality theory}

Let us recall some basic notation and facts on dual spaces. Let $V$ be a finite-dimensional vector space
over $\F$. We denote by $V^\star:=\Hom(V,\F)$ its dual space. It has the same dimension as $V$.
The transpose of an endomorphism $u$ of $V$ is defined as
$$u^t : \varphi \in V^\star \mapsto u \circ \varphi \in V^\star.$$
A classical consequence of the Frobenius normal form is that $u^t$ is similar to $u$.

\subsection{Hyperbolic and symplectic extensions of an automorphism}\label{expansionsection}

Let $V$ be a finite-dimensional vector space, and let $\varepsilon \in \{-1,1\}$.
On the product space $V \times V^\star$, we consider the bilinear form
$$H_V^\varepsilon : \begin{cases}
(V \times V^\star)^2 & \longrightarrow \F \\
((x,\varphi),(y,\psi)) & \longmapsto \varphi(y)+\varepsilon \psi(x).
\end{cases}$$
One checks that $H_V^1$ is a non-degenerate hyperbolic symmetric bilinear form, whereas $H_V^{-1}$ is a symplectic form.

Let $u \in \GL(V)$.
We consider the endomorphism
$$h(u) : \begin{cases}
V \times V^\star & \longrightarrow V \times V^\star \\
(x,\varphi) & \longmapsto \bigl(u(x), (u^{-1})^t(\varphi)\bigr).
\end{cases}$$
One checks that $(H_V^\varepsilon,h(u))$ is an $\varepsilon$-isopair, and we denote it by $H_\varepsilon(u)$ (see section 2.1 of \cite{dSP2invol} for details).

Let $v \in \GL(V)$ and $w \in \GL(W)$ be similar automorphisms. It was proved in \cite{dSP2invol} that
$$H_\varepsilon(v) \simeq H_\varepsilon(w).$$
Finally, let $u_1 \in \GL(V_1)$ and $u_2 \in \GL(V_2)$ be automorphisms.
Then, it was proved in \cite{dSP2invol} that
$$H_\varepsilon(u_1 \oplus u_2) \simeq H_\varepsilon(u_1) \bot  H_\varepsilon(u_2).$$
The similarity ``class" of the automorphism $h(u)$ is easily deduced from the one of $u$ because $u^t$ is similar to $u$.
Simply, $n_{p,k}(h(u))=n_{p,k}(u) +n_{p^\sharp,k}(u)$ for all $p \in \Irr(\F)$ and all $k \geq 1$.

Note that $h(uv)=h(u)h(v)$ for all $u,v$ in $\GL(V)$, and that if $u$ is unipotent with index $2$ then so is $h(u)$.
Thus, we deduce the following result:

\begin{prop}\label{prop:hyperbolicexpansionU2split}
Let $u \in \GL(V)$ be the product of two $U_2$-elements of $\GL(V)$. Then
$h(u)$ is the product of two $U_2$-elements, both in $\Ortho(H_V^1)$ and in $\Sp(H_V^{-1})$.
\end{prop}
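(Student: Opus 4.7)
The statement is essentially a direct consequence of three properties of the construction $h$ recalled just before it: that $h$ is multiplicative on $\GL(V)$, that it sends $U_2$-automorphisms to $U_2$-automorphisms, and that for every $w \in \GL(V)$ the pair $(H_V^\varepsilon, h(w))$ is an $\varepsilon$-isopair for both $\varepsilon \in \{-1,1\}$, so that $h(w)$ belongs simultaneously to $\Ortho(H_V^1)$ and to $\Sp(H_V^{-1})$.

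The plan is thus to start from a splitting $u = u_1 u_2$ in $\GL(V)$ with $u_1,u_2$ both $U_2$, apply $h$ to obtain $h(u)=h(u_1)h(u_2)$, and observe that each $h(u_i)$ is a $U_2$-automorphism of $V \times V^\star$ lying in $\Ortho(H_V^1) \cap \Sp(H_V^{-1})$. The same pair $(h(u_1),h(u_2))$ thereby provides a $U_2$-splitting of $h(u)$ in both isometry groups simultaneously.

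There is no significant obstacle to clear: the whole content of the proof is packaged into the multiplicativity of $h$ and its preservation of the $U_2$-property, both of which have been established earlier. The only mildly remarkable feature is that the same splitting factors work for both values of $\varepsilon$, i.e.\ the decomposition does not depend on whether one views $V \times V^\star$ as carrying $H_V^1$ or $H_V^{-1}$.
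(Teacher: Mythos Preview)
Your proposal is correct and matches the paper's own reasoning exactly: the paper simply notes that $h(uv)=h(u)h(v)$ and that $h$ preserves the $U_2$-property, then states the proposition as an immediate consequence. There is no additional argument in the paper beyond what you have written.
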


As in \cite{dSP2invol} (see lemma 2.4 there), we obtain:

\begin{prop}\label{prop:Wallinvariantshyperbolic}
Let $u \in \GL(V)$ and $\varepsilon \in \{-1,1\}$. Then all the Wall invariants of $H_\varepsilon(u)$
are hyperbolic.
\end{prop}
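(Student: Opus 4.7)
The plan is to exhibit, in each Wall invariant of $H_\varepsilon(u)$, a Lagrangian (i.e.\ a totally isotropic subspace of half the total rank), which forces each Wall invariant to be hyperbolic. The source of this Lagrangian is the obvious one at the level of the ambient isopair: the subspace $L:=V \times \{0\}$ of $V \times V^\star$. Indeed, $H_V^\varepsilon$ vanishes identically on $L \times L$, and a dimension count gives $\dim L=\tfrac12 \dim(V\times V^\star)$, so $L$ is a Lagrangian of $H_V^\varepsilon$; moreover $L$ is stable under $h(u)$ since $h(u)(x,0)=(u(x),0)$. In particular, the restriction $v:=h(u)+h(u)^{-1}$ preserves $L$ and agrees there with $u+u^{-1}$ on $V$.

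First I would show $L$ induces a totally isotropic subspace in each Wall invariant. Let $p$ be a palindromial, $r\geq 1$, and write $v:=h(u)+h(u)^{-1}$. All the bilinear forms that go into defining the Wall invariants $(H_V^\varepsilon,h(u))_{p,r}$ have the shape $(X,Y)\mapsto H_V^\varepsilon(X,P(v)(Y))$ for some polynomial $P$ depending on $p$ and $r$ (including the extra factor $h(u)-h(u)^{-1}$ appearing in the symplectic quadratic case, which also stabilizes $L$). Since $P(v)$ preserves $L$ and $H_V^\varepsilon$ vanishes on $L\times L$, the image of $L\cap \Ker m(v)^r$ (or of $L\cap \Ker(h(u)\pm\id)^r$) in the quotient space underlying the Wall invariant is $\F$-totally isotropic. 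For Hermitian and skew-Hermitian invariants one needs $\L$-isotropy: this is automatic, because the image is $\L$-stable (it inherits the $\L$-structure from the action of $h(u)$, which preserves $L$) and the extension process of Section~\ref{section:extension} sends $\F$-isotropic $\L$-stable subspaces to $\L$-isotropic ones, as follows immediately from the defining relation $B(x,\lambda y)=f_p(\lambda B^\L(x,y))$.

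Next comes the dimension count. The natural map $V_{p,r}(u)\to V_{p,r}(h(u))$ induced by $x\mapsto (x,0)$ is injective: if $(x,0)=Z+m(v)(Y)$ with $Z\in \Ker m(v)^{r-1}$ and $m(v)(Y)\in \Ker m(v)^r$, projecting onto the first factor $V$ and using that $v$ restricts on $L$ to $u+u^{-1}$ yields $x\in \Ker m(u+u^{-1})^{r-1}+(\im m(u+u^{-1})\cap \Ker m(u+u^{-1})^r)$; the analogous argument handles the quadratic cases using $\Ker(h(u)\pm\id)^r$. Hence the induced subspace has $\L$-dimension $n_{p,r}(u)$ (respectively $\F$-dimension $n_{t-\eta,r}(u)$). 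Combining this with $n_{p,r}(h(u))=n_{p,r}(u)+n_{p^\sharp,r}(u)=2n_{p,r}(u)$ for palindromial $p$ (and the analogous doubling for $\eta=\pm 1$), the induced subspace has exactly half the total rank, so it is a Lagrangian, and the Wall invariant is hyperbolic.

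I expect the only real obstacle to be the injectivity/dimension step, especially the verification that the decomposition $h(u)=u\oplus (u^{-1})^t$ is clean enough that polynomials in $v=(u+u^{-1})\oplus (u+u^{-1})^t$ respect the splitting $V\oplus V^\star$ in the right way. An attractive alternative, which bypasses most of this bookkeeping, is to reduce to the case where $u$ is cyclic with minimal polynomial a prime power $q^s$, exploiting the compatibility $H_\varepsilon(u_1\oplus u_2)\simeq H_\varepsilon(u_1)\bot H_\varepsilon(u_2)$ recalled in Section~\ref{expansionsection} together with the fact that orthogonal sums of hyperbolic forms are hyperbolic; the indecomposable cases then reduce to verifying hyperbolicity of a rank-$2$ (respectively $\L$-rank-$2$) form possessing an isotropic vector, which is immediate.
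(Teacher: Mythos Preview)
Your argument is correct. The paper itself does not give a proof here but simply refers to lemma~2.4 of \cite{dSP2invol}; your Lagrangian approach via $L=V\times\{0\}$ is the natural direct argument and is almost certainly what that reference does as well. The key points you identify are all sound: $L$ is a Lagrangian stable under $h(u)$, every operator entering the definition of the Wall invariants (including $h(u)-h(u)^{-1}$ in the symplectic quadratic case) respects the splitting $V\oplus V^\star$, and the passage from $\F$-isotropy to $\L$-isotropy for an $\L$-stable subspace is exactly as you describe. The injectivity step is cleanest once you note that \emph{both} Lagrangians $V\times\{0\}$ and $\{0\}\times V^\star$ are $h(u)$-stable, so that all the subspaces $\Ker m(v)^r$, $\im m(v)$, etc.\ split over the direct sum and the quotient $V_{p,r}(h(u))$ is canonically the direct sum of the corresponding quotients on each factor; this makes the dimension count $n_{p,r}(u)=\tfrac12 n_{p,r}(h(u))$ immediate and removes the need for your separate injectivity verification. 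Your alternative route via reduction to cyclic $u$ is also valid and arguably tidier for a write-up.
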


\subsection{Boxed-products of two bilinear forms}\label{section:crossedproduct}

Let $V$ be an arbitrary finite-dimensional vector space. Let $\varepsilon\in \{-1,1\}$.

Let $b$ be a bilinear form on $V$ (at this point we do not assume that it is symmetric or skewsymmetric, and $b$ can very well be degenerate).
We consider the associated linear map $L_b : x \in V \mapsto b(-,x) \in V^\star$, and we define the endomorphism
$$v_b : (x,\varphi) \in V \times V^\star \mapsto \bigl(0,L_b(x)\bigr) \in V \times V^\star,$$
which is obviously of square zero.
For all $(x,\varphi)$ and $(y,\psi)$ in $V \times V^\star$,
$$H_V^\varepsilon \bigl(v_b(x,\varphi),(y,\psi)\bigr)=L_b(x)[y]=b(y,x).$$
Hence:
\begin{itemize}
\item If $\varepsilon=1$ then $v_b$ is $H_V^\varepsilon$-skewselfadjoint if and only $b$ is skewsymmetric.
\item If $\varepsilon=-1$ then $v_b$ is $H_V^\varepsilon$-skewselfadjoint if and only $b$ is symmetric.
\end{itemize}
Assume further that $b$ is non-degenerate. Then $L_b$ is an isomorphism and we can consider the endomorphism,
$$w_b : (x,\varphi) \in V \times V^\star \mapsto (L_b^{-1}(\varphi),0) \in V \times V^\star,$$
again obviously of square zero.
Let $(x,\varphi)$ and $(y,\psi)$ in $V \times V^\star$. Setting $x',y'$ in $V$ such that $L_b(x')=\varphi$
and $L_b(y')=\psi$, we find that
$$H_V^\varepsilon \bigl(w_b(x,\varphi),(y,\psi)\bigr)=\varepsilon\, \psi(x')=\varepsilon\, b(x',y').$$
Again, we find that:
\begin{itemize}
\item If $\varepsilon=1$ then $w_b$ is $H_V^\varepsilon$-skewselfadjoint if and only if $b$ is skewsymmetric.
\item If $\varepsilon=-1$ then $w_b$ is $H_V^\varepsilon$-skewselfadjoint if and only if $b$ is symmetric.
\end{itemize}

\begin{Def}
Let $b,c$ be two bilinear forms on a finite-dimensional vector space $V$, with $b$ non-degenerate.
We define their \textbf{boxed-product} as the automorphism
$$b \boxtimes c:=(\id+v_c)(\id+w_b) \in \GL(V \times V^\star).$$
\end{Def}

Next, if $c$ is non-degenerate then one checks that $b \boxtimes c -\id$ is invertible
(indeed, letting $(x,\varphi)$ belong to the kernel of $b \boxtimes c -\id$, we would have $(L_b^{-1}(\varphi),L_c(x)+(L_cL_b^{-1})(\varphi))=(0,0)$,
leading to $\varphi=0$ and then to $x=0$).
Conversely, if $c$ is degenerate then we can take $x \in \Ker L_c \setminus\{0\}$, and then $(x,0)$ is a nonzero vector in the kernel of $b \boxtimes c -\id$.

Finally, note that if $w_b$ and $v_c$ are $H_V^\varepsilon$-skewselfadjoint, then $b \boxtimes c$
is obviously the product of two $U_2$-elements in the isometry group of $H_V^\varepsilon$.

Here is a first conclusion:

\begin{prop}
Let $\varepsilon \in \{-1,1\}$. Let $b,c$ be bilinear forms on the same vector space $V$, with $b$ non-degenerate.
Assume that $b$ and $c$ are both symmetric if $\varepsilon=-1$, and both skewsymmetric otherwise.
Then $(H_V^\varepsilon, b \boxtimes c)$ is a $U_2$-splittable $\varepsilon$-isopair.
Moreover, $b \boxtimes c-\id$ is invertible if and only if $c$ is non-degenerate.
\end{prop}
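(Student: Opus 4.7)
The plan is that this proposition is essentially a digest of the verifications carried out in the discussion immediately preceding it, so the proof reduces to assembling those observations cleanly. I would split it into two short steps, one per conclusion.

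For the $U_2$-splittability, I would invoke the two bullet-point analyses of when $v_c$ and $w_b$ are $H_V^\varepsilon$-skewselfadjoint. The hypothesis relating the symmetry types of $b$ and $c$ to $\varepsilon$ is precisely what those bullet points require in order to guarantee that both $w_b$ and $v_c$ are $H_V^\varepsilon$-skewselfadjoint. Since $w_b$ and $v_c$ are square-zero, $\id+w_b$ and $\id+v_c$ are therefore $U_2$-elements of the isometry group of $H_V^\varepsilon$. Because $b\boxtimes c=(\id+v_c)(\id+w_b)$ by definition, this is the required decomposition, so $(H_V^\varepsilon, b\boxtimes c)$ is a $U_2$-splittable $\varepsilon$-isopair.

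For the invertibility statement, I would just unwind the definitions. Expanding the product gives $b\boxtimes c-\id=w_b+v_c+v_c w_b$, so applied to an arbitrary $(x,\varphi)\in V\times V^\star$ it produces $\bigl(L_b^{-1}(\varphi),\,L_c(x)+L_c L_b^{-1}(\varphi)\bigr)$. If this is zero, the first coordinate forces $\varphi=0$, since $L_b$ is an isomorphism ($b$ being non-degenerate); feeding this back into the second coordinate leaves $L_c(x)=0$. Hence $\Ker(b\boxtimes c-\id)$ is canonically identified with $\Ker L_c\times\{0\}$, which is trivial if and only if $L_c$ is injective, i.e.\ if and only if $c$ is non-degenerate.

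I do not anticipate any real obstacle: both halves reduce mechanically to the definitions. The only mild hazard is the parity-of-$\varepsilon$ bookkeeping in the first step, where one must match ``$\varepsilon=1$ versus $\varepsilon=-1$'' against ``skewsymmetric versus symmetric'' for each of $b$ and $c$. That matching was already settled in the two bullet lists preceding the statement, so I would simply cite them rather than redo the short computations.
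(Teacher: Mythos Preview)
Your proposal is correct and follows essentially the same approach as the paper: the proposition is stated there as a ``first conclusion'' drawn from the preceding discussion, and your two steps reproduce exactly the paper's verifications (the bullet-point skewselfadjointness checks for $v_c$ and $w_b$, and the explicit kernel computation $(b\boxtimes c-\id)(x,\varphi)=(L_b^{-1}(\varphi),\,L_c(x)+L_cL_b^{-1}(\varphi))$). Your identification of $\Ker(b\boxtimes c-\id)$ with $\Ker L_c\times\{0\}$ is a slightly cleaner way to phrase the paper's two-direction argument, but the content is identical.
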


Now we prove a converse statement:

\begin{prop}\label{prop:caracboxedprod}
Let $\varepsilon\in \{-1,1\}$, and let $(b,u)$ be a $U_2$-splittable $\varepsilon$-isopair such that $u-\id$ is invertible.
Then there exists a vector space $W$ together with a pair $(B,C)$ of non-degenerate bilinear forms on $W$,
both symmetric if $\varepsilon=-1$, both skewsymmetric otherwise, such that
$$(b,u) \simeq (H_W^{\varepsilon},B \boxtimes C).$$
\end{prop}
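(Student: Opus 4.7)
The plan is to decode the boxed-product model: starting from a $U_2$-splitting $u = u_1 u_2$, I first produce a pair of complementary maximal $b$-isotropic subspaces from $\Ker(u_1 - \id_V)$ and $\Ker(u_2 - \id_V)$, then identify $V$ with $W \oplus W^\star$ where $W$ is one of these subspaces, and finally read off $B$ and $C$ from the restrictions of $u_1-\id_V$ and $u_2-\id_V$ to the two summands.

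Write $u_1=\id_V+a_1$ and $u_2=\id_V+a_2$ with $a_i^2=0$ and $a_i^\star=-a_i$. The first and crucial step is to prove that $\Ker a_i=\im a_i$ for $i=1,2$ and that $V=\Ker a_1\oplus\Ker a_2$, with both summands maximal $b$-isotropic. The invertibility of $u-\id_V=a_1+a_2+a_1a_2$ immediately yields $\Ker a_1 \cap \Ker a_2 \subseteq \Ker(u-\id_V) = \{0\}$; combined with $\im a_i \subseteq \Ker a_i$ (from $a_i^2=0$), this also forces $\im a_1 \cap \im a_2 = \{0\}$. Dually, since $a_i^\star = -a_i$, one has $(\Ker a_i)^{\bot_b} = \im a_i$, hence $\im a_1 + \im a_2 = V$. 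A dimension count then pins down $\dim \Ker a_i = \dim \im a_i = (\dim V)/2$, whence $\im a_i = \Ker a_i$, and each $V_i := \Ker a_i$ satisfies $V_i = V_i^{\bot_b}$, i.e.\ is maximal $b$-isotropic.

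Set $W := V_1$. Because $V_1$ and $V_2$ are complementary maximal isotropic subspaces, $b$ induces a non-degenerate pairing $V_1 \times V_2 \to \F$, so $\Psi : y \in V_2 \mapsto b(y,-)|_{V_1} \in W^\star$ is an isomorphism. A direct check shows that the isomorphism $(x,y) \in V_1 \oplus V_2 \mapsto (x, \Psi(y)) \in W \oplus W^\star$ transports $b$ onto $H_W^\varepsilon$; the required sign-matching is precisely what the symmetry type of $b$ ensures. Since $\Ker a_i = \im a_i$ and $V = V_1 \oplus V_2$, the restrictions $\alpha_1 := a_1|_{V_1}:V_1 \to V_2$ and $\beta_2 := a_2|_{V_2}:V_2 \to V_1$ are isomorphisms. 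Reading them through $\Psi$, define $C(y,x) := b(a_1 x, y)$ and $B(y,x) := b(z,y)$, where $z \in V_2$ is the unique vector with $a_2(z) = x$, for $x,y \in W$. Routine use of $a_i^\star = -a_i$ shows that both $B$ and $C$ are non-degenerate and have the required symmetry (both skewsymmetric if $\varepsilon = 1$, both symmetric if $\varepsilon = -1$). By construction, under the identification, $a_1$ corresponds to $v_C$ and $a_2$ to $w_B$, hence $u = (\id_V + a_1)(\id_V + a_2)$ corresponds to $(\id + v_C)(\id + w_B) = B \boxtimes C$, yielding $(b, u) \simeq (H_W^\varepsilon, B \boxtimes C)$.

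The main obstacle is the opening step: this is the only place where invertibility of $u - \id_V$ plays an essential role (it is what trivialises $\Ker a_1 \cap \Ker a_2$ and thereby produces the Lagrangian decomposition). Once this Lagrangian pair is available, the identification of $V$ with $W \oplus W^\star$ and the definitions of $B, C$ are essentially forced by matching the explicit formulas for $v_C$ and $w_B$, and checking their properties reduces to a short computation using $a_i^\star = -a_i$.
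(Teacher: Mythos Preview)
Your approach is essentially the paper's: extract two complementary Lagrangian subspaces from the square-zero pieces of a $U_2$-splitting, identify $V$ with $W\oplus W^\star$, and read off $B,C$. The Lagrangian step is correct (you argue via $\Ker a_1\cap\Ker a_2=\{0\}$ and dualise; the paper argues directly that $\im a_1+\im a_2=V$, but these are equivalent).

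However, the last paragraph has index errors that make your stated definitions literally false. With $V_i:=\Ker a_i$ and $W:=V_1$, the map $a_1$ \emph{vanishes} on $V_1$ and sends $V_2$ to $V_1$; so ``$\alpha_1:=a_1|_{V_1}:V_1\to V_2$'' is the zero map, and your formula $C(y,x):=b(a_1 x,y)$ with $x,y\in W=V_1$ yields $C=0$. Under your identification $V_1\leftrightarrow W\times\{0\}$ and $V_2\leftrightarrow\{0\}\times W^\star$, the operator $a_1$ (vanishing on $W\times\{0\}$, mapping $\{0\}\times W^\star$ to $W\times\{0\}$) has the shape of $w_B$, not $v_C$, and symmetrically $a_2$ has the shape of $v_C$. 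Then $u=(\id+a_1)(\id+a_2)$ transports to $(\id+w_B)(\id+v_C)$, which is \emph{not} $B\boxtimes C=(\id+v_C)(\id+w_B)$. The paper avoids this by writing $u=u_2u_1$ from the outset; you can fix it either by swapping the labels $1\leftrightarrow 2$ at the start or by taking $W:=V_2$ instead of $V_1$. Once the indices are repaired, the rest of your argument goes through and matches the paper's proof.
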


\begin{proof}
Denote by $V$ the underlying vector space of $(b,u)$.
Let $u_1$ and $u_2$ be $U_2$-elements of the isometry group of $b$ such that $u=u_2u_1$.
Then $u_1=\id+a_1$ and $u_2=\id+a_2$ for square-zero $b$-skewselfadjoint endomorphisms $a_1$ and $a_2$.
Set $V_1:=\im a_1$ and $V_2:=\im a_2$.
Then $V_1 \subset \Ker a_1=V_1^{\bot_b}$ and likewise $V_2 \subset V_2^{\bot_b}$, i.e.\ $V_1$ and $V_2$ are totally $b$-singular, leading to
$\dim V_1 \leq \frac{1}{2} \dim V$ and  $\dim V_2 \leq \frac{1}{2} \dim V$.
Moreover, $V_1+V_2=V$ because $\im(u-\id) \subset V_1+V_2$ and $u-\id$ is invertible.
Hence $V=V_1 \oplus V_2$, $V_1=V_1^{\bot_b}$ and $V_2=V_2^{\bot_b}$. It follows that $a_1$ maps $V_2$ bijectively onto $V_1$,
and $a_2$ maps $V_1$ bijectively onto $V_2$.

Denote by $V_2^\circ:=\{\varphi \in V^*: \; \forall x \in V_2, \; \varphi(x)=0\}$ the dual-orthogonal of $V_2$.
As $V_2$ is totally $b$-singular, the isomorphism $L_b: x \mapsto b(-,x)$ induces an
injective linear mapping $V_2 \rightarrow (V_2)^\circ$, which turns out to be an isomorphism because $2\dim V_2=\dim V$.
Composing it with the natural (restriction) isomorphism from $(V_2)^\circ$ to $V_1^\star$,
we obtain an isomorphism $f : V_2 \rightarrow V_1^\star$ that takes every $x \in V_2$ to the linear form $y \in V_1 \mapsto b(y,x)$.
Then we consider the isomorphism
$$\Phi : \begin{cases}
V & \longrightarrow V_1 \times V_1^\star \\
x_1+x_2 & \longmapsto \bigl(x_1,\varepsilon\,f(x_2)\bigr) \quad \text{with $x_1 \in V_1$ and $x_2 \in V_2$.}
\end{cases}$$
Next, for all $x,y$ in $V$, we split $x=x_1+x_2$ and $y=y_1+y_2$ with $x_1,y_1$ in $V_1$ and $x_2,y_2$ in $V_2$,
and we see that
\begin{align*}
b(x,y) & =b(x_1,y_2)+b(x_2,y_1) \\
& =b(x_1,y_2)+\varepsilon \, b(y_1,x_2) \\
& = f(y_2)[x_1]+\varepsilon \,f(x_2)[y_1] \\
& =H_{V_1}^\varepsilon\bigl(\Phi(x),\Phi(y)\bigr).
\end{align*}
Hence, $\Phi$ is an isometry from $b$ to $H_{V_1}^\varepsilon$.
Now, setting $a'_1:=\Phi \circ a_1 \circ \Phi^{-1}$ and $a'_2 := \Phi \circ a_2 \circ \Phi^{-1}$, we
obtain two $H_{V_1}^{\varepsilon}$-skewselfadjoint endomorphisms.
As $a_1$ vanishes everywhere on $V_1$ and maps $V_2$ bijectively onto $V_1$,
we find that $a'_1$ vanishes everywhere on $V_1 \times \{0\}$ and maps $\{0\} \times V_1^\star$ bijectively onto $V_1 \times \{0\}$.
The associated isomorphism from $V_1^\star$ to $V_1$ then reads $L_B^{-1}$ for a unique non-degenerate bilinear form $B$ on $V_1$,
and hence $a'_1=w_B$.
Likewise, $a'_2$ vanishes everywhere on $\{0\} \times V_1^\star$ and maps $V_1 \times \{0\}$ bijectively onto $\{0\} \times V_1^\star$: the associated
linear map from $V_1$ to $V_1^\star$ reads $L_C$ for a unique non-degenerate bilinear form $C$ on $V_1$, and hence $a'_2=v_C$.
Since both $a'_1$ and $a'_2$ are $H_{V_1}^{\varepsilon}$-skewselfadjoint, we find that both $B$ and $C$ are skewsymmetric if $\varepsilon=1$,
and symmetric if $\varepsilon=-1$.

Noting that $\Phi \circ u \circ \Phi^{-1}=B \boxtimes C$, we conclude that
$B \boxtimes C$ belongs to the isometry group of $H_{V_1}^\varepsilon$ and that the
$\varepsilon$-isopairs $(b,u)$ and $(H_{V_1}^\varepsilon,B \boxtimes C)$ are isometric.
\end{proof}

Hence, in order to solve our problem for elements $u$ such that $u-\id$ is invertible, it essentially
remains to examine the possible invariants of boxed-products.

\subsection{Boxed-products as functions of pairs of forms}

Let $(b,c)$ be a pair of bilinear forms on a vector space $V$, and $(b',c')$ be a pair of bilinear forms on a vector space $V'$.
We say that $(b,c)$ is isometric to $(b',c')$ when there exists a vector space isomorphism
$\varphi : V \overset{\simeq}{\rightarrow} V'$ such that
$$\forall (x,y) \in V^2, \; b'\bigl(\varphi(x),\varphi(y)\bigr)=b(x,y) \quad \text{and} \quad
c'\bigl(\varphi(x),\varphi(y)\bigr)=c(x,y).$$
This defines an equivalence relation on the collection of pairs of bilinear forms with the same underlying vector space.

The orthogonal direct sum of two such pairs $(b_1,c_1)$ and $(b_2,c_2)$ is defined as
$(b_1,c_1) \bot (b_2,c_2) :=(b_1 \bot b_2,c_1 \bot c_2)$, and one checks that it is compatible with isometry
(if we replace a summand with an isometric one, then the sum is unchanged up to an isometry).

We shall say that a pair is \textbf{indecomposable} whenever it is non-trivial (i.e.\ defined on a non-zero vector space) and it is
not isometric to the orthogonal direct sum of two non-trivial pairs.

The following lemmas are proved in much the same way as the corresponding ones of section 2.4 of
\cite{dSPsquarezeroquadratic} (lemmas 2.6 and 2.7), so we leave the proofs
to the reader.

\begin{lemma}\label{cpequivalencelemma}
Let $\varepsilon \in \{-1,1\}$. Let $(b,c)$ and $(b',c')$ be pairs of bilinear forms on respective vector spaces $V$ and $V'$.
Assume that $b$ and $b'$ are non-degenerate and that
$(b,c) \simeq (b',c')$. Assume finally that $b,c,b',c'$ are all skewsymmetric if $\varepsilon=1$, and all symmetric if $\varepsilon=-1$.
Then
$$(H_V^{\varepsilon},b \boxtimes c) \simeq (H_{V'}^{\varepsilon},b' \boxtimes c').$$
\end{lemma}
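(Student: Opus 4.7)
The plan is to simply transport the isometry $\alpha:V \to V'$ of pairs to the doubled spaces via its dual, and then check that the resulting map intertwines both factors of the boxed product.

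More concretely, let $\alpha:V \overset{\simeq}{\to} V'$ be an isomorphism witnessing $(b,c) \simeq (b',c')$, so that $b'(\alpha(x),\alpha(y))=b(x,y)$ and $c'(\alpha(x),\alpha(y))=c(x,y)$ for all $x,y\in V$. I would define
$$\Psi : V \times V^\star \longrightarrow V' \times (V')^\star, \qquad (x,\psi) \longmapsto \bigl(\alpha(x),\,\psi\circ\alpha^{-1}\bigr),$$
which is obviously a linear isomorphism. A direct computation shows that, for all $(x,\psi),(y,\chi)\in V\times V^\star$,
$$H_{V'}^{\varepsilon}\bigl(\Psi(x,\psi),\Psi(y,\chi)\bigr)=(\psi\circ\alpha^{-1})(\alpha(y))+\varepsilon(\chi\circ\alpha^{-1})(\alpha(x))=\psi(y)+\varepsilon\chi(x),$$
so $\Psi$ is an isometry from $H_V^\varepsilon$ to $H_{V'}^\varepsilon$, regardless of the sign $\varepsilon$.

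The core step is to check that $\Psi$ intertwines the four elementary operators used in the construction of the boxed product; i.e.\ $\Psi\circ v_c=v_{c'}\circ\Psi$ and $\Psi\circ w_b=w_{b'}\circ\Psi$. For $v_c$, one evaluates both sides on $(x,\psi)$: the left side produces the linear form $y'\mapsto c(\alpha^{-1}(y'),x)$ on $V'$, while the right produces $y'\mapsto c'(y',\alpha(x))$, and these agree by the isometry condition $c'(\alpha(\alpha^{-1}(y')),\alpha(x))=c(\alpha^{-1}(y'),x)$. The verification for $w_b$ is entirely analogous: writing $z:=L_b^{-1}(\psi)$, one must check that $L_{b'}(\alpha(z))=\psi\circ\alpha^{-1}$, which reduces after unraveling to the identity $b'(y',\alpha(z))=b(\alpha^{-1}(y'),z)$ supplied again by the isometry hypothesis on $b$. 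Note that in both verifications the parameter $\varepsilon$ and the symmetry type of the forms play no role — these hypotheses enter only to guarantee that $(H_V^\varepsilon,b\boxtimes c)$ and $(H_{V'}^\varepsilon,b'\boxtimes c')$ are genuine $\varepsilon$-isopairs in the first place.

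Combining these intertwiners gives $\Psi\circ(\id+v_c)(\id+w_b)=(\id+v_{c'})(\id+w_{b'})\circ\Psi$, i.e.\ $\Psi\circ(b\boxtimes c)=(b'\boxtimes c')\circ\Psi$, which together with the isometry property of $\Psi$ yields the claimed isomorphism of $\varepsilon$-isopairs. There is no serious obstacle here — the whole proof is a bookkeeping exercise, and the only point that requires any thought is picking the correct dual-side twist, namely $\psi\mapsto\psi\circ\alpha^{-1}$ (equivalently, $(\alpha^{-1})^t$), so that the defining identity of $L_b$ and $L_c$ is transported along $\alpha$ cleanly.
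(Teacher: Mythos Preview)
Your proof is correct and is exactly the natural transport-of-structure argument one expects here. The paper in fact omits the proof entirely, stating only that it is proved ``in much the same way as the corresponding ones of section 2.4 of \cite{dSPsquarezeroquadratic},'' so your write-up is precisely what a reader filling in the details would produce.
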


\begin{lemma}\label{cpdirectsumlemma}
Let $\varepsilon \in \{-1,1\}$. Let $(b_1,c_1)$ and $(b_2,c_2)$ be pairs of bilinear forms on respective vectors spaces $V_1$ and $V_2$.
Assume finally that $b_1,c_1,b_2,c_2$ are all skewsymmetric if $\varepsilon=1$, and all symmetric if $\varepsilon=-1$.
Then
$$\bigl(H_{V_1 \times V_2}^{\varepsilon},(b_1\bot b_2) \boxtimes (c_1 \bot c_2)\bigr) \simeq
(H_{V_1}^\varepsilon,b_1 \boxtimes c_1) \bot (H_{V_2}^\varepsilon,b_2 \boxtimes c_2).$$
\end{lemma}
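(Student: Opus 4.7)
The plan is to exhibit one explicit isomorphism of underlying spaces and verify, in turn, compatibility with the bilinear form and compatibility with the automorphism. Set $V:=V_1 \times V_2$ and let $\iota : V_1^\star \times V_2^\star \overset{\simeq}{\longrightarrow} V^\star$ be the canonical isomorphism $(\varphi_1,\varphi_2) \mapsto \bigl((x_1,x_2) \mapsto \varphi_1(x_1)+\varphi_2(x_2)\bigr)$. Define the vector space isomorphism
$$\Phi : \bigl((x_1,x_2),\iota(\varphi_1,\varphi_2)\bigr) \in V \times V^\star \longmapsto \bigl((x_1,\varphi_1),(x_2,\varphi_2)\bigr) \in (V_1 \times V_1^\star) \times (V_2 \times V_2^\star).$$
The whole lemma reduces to showing that $\Phi$ is an isometry from $H_V^\varepsilon$ to $H_{V_1}^\varepsilon \bot H_{V_2}^\varepsilon$ and that it conjugates $(b_1 \bot b_2) \boxtimes (c_1 \bot c_2)$ to the direct sum of $b_1 \boxtimes c_1$ and $b_2 \boxtimes c_2$.

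For the form, an immediate expansion gives
$$H_V^\varepsilon\Bigl(\bigl((x_1,x_2),\iota(\varphi_1,\varphi_2)\bigr), \bigl((y_1,y_2),\iota(\psi_1,\psi_2)\bigr)\Bigr) = \sum_{i=1}^{2}\bigl(\varphi_i(y_i)+\varepsilon\,\psi_i(x_i)\bigr),$$
and the right-hand side is exactly the value of $H_{V_1}^\varepsilon \bot H_{V_2}^\varepsilon$ on the images under $\Phi$.

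For the automorphism, the key observation is that under $\iota$, the linear map $L_{b_1 \bot b_2} : V \to V^\star$ corresponds to $L_{b_1} \oplus L_{b_2} : V_1 \times V_2 \to V_1^\star \times V_2^\star$, and likewise $L_{c_1 \bot c_2}$ corresponds to $L_{c_1} \oplus L_{c_2}$. Plugging these into the definitions of $v_{\bullet}$ and $w_{\bullet}$ from Section \ref{section:crossedproduct}, a short computation shows that $\Phi$ conjugates $w_{b_1 \bot b_2}$ to $w_{b_1} \oplus w_{b_2}$ and $v_{c_1 \bot c_2}$ to $v_{c_1} \oplus v_{c_2}$. Taking the product $(\id+v_{c_1 \bot c_2})(\id+w_{b_1 \bot b_2})$ on the left, the direct-sum structure is preserved, and one obtains that $\Phi$ conjugates $(b_1 \bot b_2) \boxtimes (c_1 \bot c_2)$ to $(b_1 \boxtimes c_1) \oplus (b_2 \boxtimes c_2)$, which is by definition the underlying automorphism of $(H_{V_1}^\varepsilon,b_1 \boxtimes c_1) \bot (H_{V_2}^\varepsilon,b_2 \boxtimes c_2)$.

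There is no genuine obstacle: the entire argument is formal bookkeeping of canonical identifications, and the only point requiring care is the consistent use of $\iota$ to pass between $(V_1 \times V_2)^\star$ and $V_1^\star \times V_2^\star$. This is exactly what makes the proof parallel to that of Lemmas 2.6--2.7 of \cite{dSPsquarezeroquadratic}, justifying the authors' decision to leave the details to the reader.
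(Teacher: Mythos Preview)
Your proof is correct and is precisely the direct verification the paper leaves to the reader: the canonical identification $\Phi$ via $\iota : V_1^\star \times V_2^\star \overset{\simeq}{\to} (V_1\times V_2)^\star$ simultaneously respects $H^\varepsilon$ and splits $L_{b_1\bot b_2}$, $L_{c_1\bot c_2}$ as block-diagonal, hence conjugates the boxed-product to the orthogonal direct sum. This is exactly the argument pattern of Lemmas 2.6--2.7 in \cite{dSPsquarezeroquadratic} that the paper cites in lieu of a proof.
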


It is well known that the classification of pairs of symmetric (respectively, skewsymmetric) bilinear forms with the first form non-degenerate
comes entirely down to the one of pairs $(b,u)$ consisting of a non-degenerate symmetric (respectively, skewsymmetric) bilinear form $b$
on a vector space $V$ together with a $b$-selfadjoint endomorphism $u$ of $V$ (we call such a pair an $(\varepsilon,1)$-pair, where
$\varepsilon=1$ if $b$ is symmetric and $\varepsilon=-1$ if $b$ is skewsymmetric).

To see this, let $\varepsilon \in \{1,-1\}$, and let $(b,c)$ be a pair of bilinear forms on a vector space $V$, both symmetric if $\varepsilon=1$, and
both skewsymmetric if $\varepsilon=-1$. Then, $u:=L_b^{-1} L_c$ is an endomorphism of $V$, and one checks that
$(b,u)$ is an $(\varepsilon,1)$-pair. Conversely, given an $(\varepsilon,1)$-pair $(b,u)$, one sees that
$c : (x,y) \mapsto b(x,u(y))$ is a bilinear form that is symmetric if $\varepsilon=1$, skewsymmetric if $\varepsilon=-1$.
Moreover, one checks that, given pairs $(b,c)$ and $(b',c')$ of bilinear forms, every isometry from $(b,c)$ to $(b',c')$
turns out to be an isometry from $(b,L_b^{-1} L_c)$ to $(b',L_{b'}^{-1} L_{c'})$. Conversely, given
$(\varepsilon,1)$-pairs $(b,u)$ and $(b',u')$, every isometry from $(b,u)$ to $(b',u')$ turns out to be an isometry from
$(b,c)$ to $(b',c')$ where $c : (x,y) \mapsto b(x,u(y))$ and $c' : (x,y) \mapsto b'(x,u'(y))$.

\begin{lemma}\label{lemma:boxprodv}
Let $\varepsilon \in \{1,-1\}$.
Let $(b,c)$ be a pair of bilinear forms defined on the same vector space $V$, both symmetric if $\varepsilon=1$,
both skewsymmetric if $\varepsilon=-1$. Assume that $b$ is non-degenerate.
Set $v:=(b \boxtimes c)+(b \boxtimes c)^{-1}$ and $u:=L_b^{-1} L_c+2\id_V$.
Then
$$\forall (x,\varphi)\in V \times V^\star, \; v(x,\varphi)=(u(x),u^t(\varphi)).$$
\end{lemma}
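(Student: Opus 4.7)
The plan is to exploit the fact that $v_c$ and $w_b$ are both square-zero, which makes $(b\boxtimes c)$ and its inverse trivial to write down, and then to match the resulting block action against $u$ and $u^t$.

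First I would compute $(b\boxtimes c)^{-1}$. Since $v_c^2=0$ and $w_b^2=0$, we have $(\id+v_c)^{-1}=\id-v_c$ and $(\id+w_b)^{-1}=\id-w_b$, so
\[
(b\boxtimes c)^{-1}=(\id-w_b)(\id-v_c)=\id-v_c-w_b+w_bv_c,
\]
while $(b\boxtimes c)=\id+v_c+w_b+v_cw_b$. Adding these expressions, all first-order terms cancel and one obtains
\[
v=2\,\id+v_cw_b+w_bv_c.
\]

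Next I would unfold the two composed operators on a generic vector $(x,\varphi)\in V\times V^\star$. Using the explicit formulas $v_c(x,\varphi)=(0,L_c(x))$ and $w_b(x,\varphi)=(L_b^{-1}(\varphi),0)$, we get
\[
v_cw_b(x,\varphi)=\bigl(0,L_cL_b^{-1}(\varphi)\bigr),\qquad w_bv_c(x,\varphi)=\bigl(L_b^{-1}L_c(x),0\bigr).
\]
Consequently,
\[
v(x,\varphi)=\bigl(2x+L_b^{-1}L_c(x),\;2\varphi+L_cL_b^{-1}(\varphi)\bigr)=\bigl(u(x),\;(L_cL_b^{-1}+2\,\id_{V^\star})(\varphi)\bigr).
\]

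It remains to check that $L_cL_b^{-1}+2\,\id_{V^\star}$ equals $u^t$. Taking the transpose of $u=L_b^{-1}L_c+2\,\id_V$ (under the canonical identification $V^{\star\star}\simeq V$) gives $u^t=L_c^t(L_b^t)^{-1}+2\,\id_{V^\star}$. Now a direct computation from the definition $L_b(x)(y)=b(y,x)$ shows that $L_b^t=L_b$ when $b$ is symmetric and $L_b^t=-L_b$ when $b$ is skewsymmetric (and similarly for $c$). Under our assumption, $b$ and $c$ share the same symmetry type, so either both signs are $+$ or both are $-$; in the latter case the two sign flips cancel inside $L_c^t(L_b^t)^{-1}$. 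In both cases we obtain $u^t=L_cL_b^{-1}+2\,\id_{V^\star}$, completing the identification.

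There is no real obstacle here: the only subtle point is the sign bookkeeping in the transpose step, which is rescued by the hypothesis that $b$ and $c$ have matching symmetry type.
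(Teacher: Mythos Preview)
Your proof is correct and follows essentially the same route as the paper: both expand $v=2\id+v_cw_b+w_bv_c$, read off the block action $(L_b^{-1}L_c(x),\,L_cL_b^{-1}(\varphi))$, and then use the symmetry hypothesis (via $L_b^t\circ i=\varepsilon L_b$, $L_c^t\circ i=\varepsilon L_c$ under the biduality identification) to identify $L_cL_b^{-1}$ with $(L_b^{-1}L_c)^t$. Your write-up is slightly more explicit about the inverse and the sign cancellation, but there is no substantive difference.
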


\begin{proof}
One checks that $v=2\id+w_b v_c+v_c w_b$.
Let $x \in V$ and $\varphi \in V^\star$. Then
$$(w_bv_c+v_c w_b)(x,\varphi)=(L_b^{-1} L_c(x),L_c L_b^{-1}(\varphi)).$$
Denoting by $i : V \overset{\simeq}{\rightarrow} V^{\star\star}$
the standard double-duality isomorphism, we have $L_b=\varepsilon L_b^t \circ i$ and $L_c=\varepsilon L_c^t \circ i$, to the effect that
$L_c L_b^{-1}=L_c^t (L_b^t)^{-1}=(L_b^{-1} L_c)^t$.
Hence,
$$v(x,\varphi)=2\,(x,\varphi)+\bigl((u-2\id)(x),(u-2\id)^t(\varphi)\bigr)=\bigl(u(x),u^t(\varphi)\bigr).$$
\end{proof}

\subsection{The Wall invariants of boxed-products}\label{section:invariantsboxedproduct}

Before we compute the Wall invariants of boxed-products, we need to recall
the construction of the quadratic invariants of a pair $(b,c)$ of symmetric bilinear forms on the same vector space $V$, with
$b$ non-degenerate. Letting $P=(b,c)$ be such a pair, we set $u:=L_b^{-1} L_c$ and consider the corresponding
$(1,1)$-pair $(b,u)$. Let $p \in \Irr(\F) \cup \{t\}$ be of degree $d$, and $r \geq 1$ be an integer.
Consider the field $\L:=\F[t]/(p)$ equipped with the $\F$-linear form $e_p$ that takes the class of $1$ to $1$
and the class of $t^k$ to $0$ for all $k \in \lcro 1,d-1\rcro$. Consider the symmetric $\F$-bilinear form
$$\begin{cases}
\bigl(\Ker p(u)^{r}\bigr)^2 & \longrightarrow \F \\
(x,y) & \longmapsto b(x,p(u)^{r-1}[y]).
\end{cases}$$
Its radical is the intersection of $\Ker p(u)^r$
with the inverse image of $(\Ker p(u)^r)^{\bot_b}=\im p(u)^r$ under $p(u)^{r-1}$, and one easily
checks that this inverse image equals $\Ker p(u)^{r-1}+(\Ker p(u)^r \cap \im p(u))$.
Hence, the preceding bilinear form induces a non-degenerate $\F$-bilinear form $\overline{b_{p,r}}$ on the quotient space $V_{p,r}:=\Ker p(u)^r/(\Ker p(u)^{r-1}+(\Ker p(u)^r \cap \im p(u)))$, i.e.\ on the cokernel of the mapping from
$\Ker p(u)^{r+1}/\Ker p(u)^r$ to $\Ker p(u)^{r}/\Ker p(u)^{r-1}$ induced by $p(u)$.
Both these spaces are naturally seen as vector spaces over $\L$, and hence so is the said cokernel.
Since $u$ is $b$-selfadjoint we find that
$\overline{b_{p,r}}(x,\lambda y)=\overline{b_{p,r}}(\lambda x,y)$ for all
$\lambda \in \L$ and all $(x,y)\in (V_{p,r})^2$.
Just like in the construction of the Wall invariants, this yields a uniquely defined
non-degenerate symmetric bilinear form $P_{p,r}$ on $V_{p,r}$ such that
$$\forall (x,y,\lambda)\in (V_{p,r})^2 \times \L, \; \overline{b_{p,r}}(x,\lambda y)=e_p(\lambda P_{p,r}(x,y)).$$
The form $P_{p,r}$ is the \textbf{quadratic invariant} of $(b,u)$ with respect to $(p,r)$.

We will now compute the invariants of $(H_V^\varepsilon,b \boxtimes c)$
when $(b,c)$ is a pair of bilinear forms on the vector space $V$, both skewsymmetric if $\varepsilon=1$, both symmetric otherwise.
Using the compatibility of boxed-products with orthogonal direct sums and with isometry (see Lemmas \ref{cpequivalencelemma} and \ref{cpdirectsumlemma}), we will limit the computation to the
case where $(b,c)$ is indecomposable.
There are only two special cases to consider:
\begin{itemize}
\item If $b$ and $c$ are symmetric, $b$ is non-degenerate and $(b,c)$ is indecomposable, then $u:=L_b^{-1} L_c$ is cyclic and its minimal
polynomial equals $p^r$ for some $p \in \Irr(\F) \cup \{t\}$ and some $r \geq 1$. In that case, exactly one quadratic invariant of $(b,c)$
is non-trivial, namely $(b,c)_{p,r}$, and it is a non-degenerate symmetric bilinear form on a $1$-dimensional vector space over the field $\F[t]/(p)$.

\item If $b$ and $c$ are skewsymmetric, $b$ is non-degenerate and $(b,c)$ is indecomposable, then $u:=L_b^{-1} L_c$
is the direct sum of two cyclic endomorphisms with minimal polynomial $p^r$ for some $p \in \Irr(\F) \cup \{t\}$ and some $r \geq 1$.
\end{itemize}

\begin{Not}
Let $p$ be a monic polynomial of degree $d \geq 1$. Set
$$R(p):=t^d p(t+t^{-1}).$$
Note that $R(p)$ is a palindromial of degree $2d$.
\end{Not}

Let us recall corollary 4.7 from \cite{dSPregular}:

\begin{lemma}\label{lemma:qcorblockinvariants}
Let $N$ be an arbitrary matrix of $\Mat_n(\F)$, and $\delta$ be a nonzero scalar.
Denote by $r_1,\dots,r_a$ the invariant factors of $N$.
Then the invariant factors of
$$K(N):=\begin{bmatrix}
0_n & -I_n \\
I_n & N
\end{bmatrix}$$
are $R(r_1),\dots,R(r_a)$.
\end{lemma}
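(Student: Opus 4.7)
My plan is to reduce the statement to the case where $N$ is cyclic and then exhibit a cyclic vector for $K(N)$ directly.

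First I would record two compatibility properties. The construction $N \mapsto K(N)$ is additive with respect to direct sums: a block permutation of coordinates shows that $K(N_1 \oplus N_2)$ is similar to $K(N_1) \oplus K(N_2)$. The operation $R$ is multiplicative, namely $R(pq) = R(p)R(q)$ for monic $p,q$, so $r_i \mid r_j$ implies $R(r_i) \mid R(r_j)$. Using the Frobenius decomposition $N \sim C(r_1) \oplus \cdots \oplus C(r_a)$, these two observations reduce the lemma to the following cyclic statement: if $r \in \F[t]$ is monic of degree $d$ and $N$ is cyclic with minimal polynomial $r$, then $K(N)$ is cyclic on $\F^{2d}$ with minimal polynomial $R(r)$. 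The characteristic polynomial follows at once from the Schur complement formula:
$$\det(tI_{2n} - K(N)) = \det(tI_n) \cdot \det\bigl((tI_n - N) + t^{-1}I_n\bigr) = t^n \det\bigl((t+t^{-1})I_n - N\bigr),$$
which in the cyclic case equals $t^d r(t+t^{-1}) = R(r)$. So the only thing left is to verify that $K(N)$ is cyclic (equivalently, that its minimal polynomial has degree $2d$).

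The key to cyclicity is the identity
$$M + M^{-1} = N \oplus N, \qquad \text{where } M := K(N),$$
which comes from the explicit computation $M^{-1} = \begin{bmatrix} N & I_n \\ -I_n & 0_n \end{bmatrix}$. Since $M$ is invertible, $M^{-1}$ is a polynomial in $M$, and hence so is $M + M^{-1}$. Now I would pick a cyclic vector $v$ of $N$, set $w := (v, 0)$, and show that $w$ is cyclic for $M$. Indeed $Mw = (0,v)$ lies in the $M$-cyclic subspace $Z$ generated by $w$, and a direct calculation gives $(M+M^{-1})(v,0) = (Nv,0)$ and $(M+M^{-1})(0,v) = (0,Nv)$; iterating shows that $(N^k v, 0)$ and $(0, N^k v)$ all lie in $Z$ for every $k \geq 0$. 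Since $v$ is cyclic for $N$, these vectors span $\F^n \oplus \F^n$, so $Z = \F^n \oplus \F^n$. Thus $M$ is cyclic, and since
$$R(r)(M) = M^d \, r(M+M^{-1}) = M^d \bigl(r(N) \oplus r(N)\bigr) = 0,$$
the minimal polynomial of $M$ equals $R(r)$, as required.

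The main obstacle is the cyclicity step; everything else is formal bookkeeping. The elegant identity $M + M^{-1} = N \oplus N$ does most of the work, converting questions about the $\F[t,t^{-1}]$-module structure on $\F^{2n}$ under $M$ into tractable questions about the $\F[s]$-module structure under $N$, and thereby bypassing any eigenvalue-by-eigenvalue case analysis, which would be especially awkward near $\lambda = \pm 1$, where the quadratic $x^2 - \mu x + 1$ has a double root.
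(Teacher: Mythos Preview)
Your proof is correct. The reduction to the cyclic case via the direct-sum compatibility of $N \mapsto K(N)$ and the multiplicativity $R(pq)=R(p)R(q)$ is clean, and the identity $M+M^{-1}=N\oplus N$ (which you verify explicitly) does exactly what you claim: since $M^{-1}$ is a polynomial in $M$, the $\F[M]$-submodule generated by $(v,0)$ is stable under $N\oplus N$ and already contains $(0,v)=M(v,0)$, whence it equals $\F^{2d}$. The Schur-complement computation of the characteristic polynomial then pins down the minimal polynomial as $R(r)$.

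There is no proof in the paper to compare against: the lemma is quoted without argument as corollary~4.7 of \cite{dSPregular}. Your write-up thus supplies a self-contained proof where the paper relies on an external reference. (Incidentally, the unused scalar $\delta$ in the statement is an artifact carried over from the paper and plays no role.)
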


\begin{lemma}\label{lemma:invariantscrossedprod}
Let $b$ and $c$ be non-degenerate bilinear forms on a vector space $V$.
Denote by $p_1,\dots,p_r$ the invariant factors of $L_b^{-1} L_c+2\id$. Then
the invariant factors of $b \boxtimes c$ are $R(p_1),\dots,R(p_r)$.
\end{lemma}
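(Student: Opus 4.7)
The plan is to reduce to Lemma \ref{lemma:qcorblockinvariants} by exhibiting an explicit similarity between $b \boxtimes c$ and a matrix of the form $K(M)$, where $M$ represents $u := L_b^{-1}L_c+2\,\id$ in a suitable basis. First I will choose a basis of $V$ and use its dual basis of $V^\star$, producing a natural basis of $V \times V^\star$ in which $L_b$ and $L_c$ are represented by matrices $B,C \in \Mat_n(\F)$ (with $B$ invertible) and $u$ is represented by $M := B^{-1}C+2I_n$.

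Unwinding the definition $b \boxtimes c=(\id+v_c)(\id+w_b)$ yields the block form
\[
T \;:=\; b \boxtimes c \;=\; \begin{bmatrix} I_n & B^{-1} \\ C & I_n+CB^{-1} \end{bmatrix},
\]
and conjugating $T$ by the invertible block matrix $I_n \oplus B$ simplifies it to $T':= \begin{bmatrix} I_n & I_n \\ M-2I_n & M-I_n \end{bmatrix}$. The core step is then a direct block computation showing that the involution $Q := \begin{bmatrix} I_n & 0 \\ -I_n & -I_n \end{bmatrix}$ satisfies $Q\,K(M)\,Q = T'$, which yields the desired similarity between $b \boxtimes c$ and $K(M)$. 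Applying Lemma \ref{lemma:qcorblockinvariants} then gives the invariant factors of $b \boxtimes c$ as $R(p_1),\dots,R(p_r)$, where $p_1,\dots,p_r$ are the invariant factors of $M$ and hence of $u$.

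The only non-routine step is guessing $Q$. Here Lemma \ref{lemma:boxprodv} provides a decisive hint: it shows that $T+T^{-1}$ acts on $V \times V^\star$ as $u \oplus u^t$, exactly mirroring the identity $K(N)+K(N)^{-1}=N \oplus N$ read off directly from the definition of $K$. This strongly suggests that $T$ and $K(M)$ are similar and reduces the search for $Q$ to a short ansatz among lower-triangular block involutions. Note that the non-degeneracy of $c$ is not used in this argument: it enters only as a hypothesis of the ambient proposition.
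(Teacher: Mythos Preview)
Your proof is correct and follows essentially the same approach as the paper: both arguments write $b\boxtimes c$ as a $2\times 2$ block matrix and reach the form $K(M)$ (with $M$ representing $L_b^{-1}L_c+2\,\id$) via a short chain of explicit block conjugations, then invoke Lemma~\ref{lemma:qcorblockinvariants}. The only cosmetic difference is that the paper starts from the twisted basis $\bigl((0,L_b(e_i)),(e_i,0)\bigr)$ of $V\times V^\star$ (which absorbs your conjugation by $I_n\oplus B$), and then uses two different conjugating matrices in place of your involution $Q$; your heuristic via Lemma~\ref{lemma:boxprodv} and the identity $K(N)+K(N)^{-1}=N\oplus N$ is a pleasant addition that the paper does not make explicit.
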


\begin{proof}
Set $u:=L_b^{-1} L_c+2\id$.
Consider a basis $(e_1,\dots,e_n)$ of $V$. Then, $\mathbf{B}:=\bigl((0,L_b(e_1)),\dots,(0,L_b(e_n)),(e_1,0),\dots,(e_n,0)\bigr)$
is a basis of $V \times V^\star$.
The respective matrices of $v_c$ and $w_b$ in that basis equal
$$\begin{bmatrix}
0_n & A \\
0_n & 0_n
\end{bmatrix} \quad \text{and} \quad
\begin{bmatrix}
0_n & 0_n \\
I_n & 0_n
\end{bmatrix},$$
where $A$ stands for the matrix that represents $L_b^{-1} L_c$ in $(e_1,\dots,e_n)$.
It follows that the matrix of $b \boxtimes c$ in $\bfB$ equals
$$M:=\begin{bmatrix}
A+I_n & A \\
I_n & I_n
\end{bmatrix}.$$
Conjugating by the block-matrix $\begin{bmatrix}
I_n & I_n \\
0 & I_n
\end{bmatrix}$, we find that $M$ is similar to
$$M':=\begin{bmatrix}
A+2I_n & -I_n \\
I_n & 0_n
\end{bmatrix}.$$
By further conjugating with
$\begin{bmatrix}
0 & I_n \\
-I_n & 0
\end{bmatrix}$, we deduce that $M$ is similar to
$$M'':=\begin{bmatrix}
0_n & -I_n \\
I_n & A+2I_n
\end{bmatrix}.$$
As the invariant factors of $A+2I_n$ are the ones of $u$, we conclude by Lemma \ref{lemma:qcorblockinvariants} that the
invariant factors of $M$ are $R(p_1),\dots,R(p_r)$.
\end{proof}

Here is an immediate corollary:

\begin{lemma}\label{lemma:jordannumbersboxedprod}
Let $b$ and $c$ be symplectic forms on a vector space $V$.
Then the Jordan numbers of $b \boxtimes c$ are all even.
\end{lemma}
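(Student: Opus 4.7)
The plan is to reduce the claim to Lemma \ref{lemma:invariantscrossedprod} by using the classification of pairs of skewsymmetric bilinear forms. First I would set $u := L_b^{-1} L_c$. Since $b$ is non-degenerate and both $b$ and $c$ are skewsymmetric, $(b, u)$ is a $(-1,1)$-pair in the sense of Section \ref{section:invariantsboxedproduct}. By the description of indecomposable pairs of skewsymmetric forms recalled in the excerpt immediately preceding Lemma \ref{lemma:invariantscrossedprod}, every indecomposable summand of $(b,c)$ has $u$ equal to a direct sum of two cyclic endomorphisms with the \emph{same} minimal polynomial $p^r$. Decomposing $(b,c)$ orthogonally into indecomposable pairs, this shows that every invariant factor of $u$ occurs with even multiplicity.

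Next, I would note that adding $2\,\id_V$ does not affect multiplicities of invariant factors: it simply replaces each invariant factor $q(t)$ by $q(t-2)$. Hence the invariant factors $p_1,\dots,p_r$ of $u + 2\,\id_V$ also come in matching pairs. Applying Lemma \ref{lemma:invariantscrossedprod} then yields that the invariant factors of $b \boxtimes c$ are $R(p_1),\dots,R(p_r)$, which by the previous observation likewise come in matching pairs.

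To conclude, I would pass from invariant factors to primary invariants: each invariant factor of $b \boxtimes c$ factors uniquely as a product of pairwise coprime prime powers $p^k$, and each such prime power contributes exactly one unit to the Jordan number $n_{p,k}(b \boxtimes c)$. Since the invariant factors come in identical pairs, each prime power appears an even number of times overall, so every Jordan number $n_{p,k}(b \boxtimes c)$ is even.

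The only substantive ingredient is the first step (the fact that the invariant factors of a $b$-selfadjoint endomorphism relative to a symplectic form appear with even multiplicity), which is stated in the excerpt; the rest is a direct application of Lemma \ref{lemma:invariantscrossedprod} and elementary bookkeeping on invariant factors versus primary invariants, so I do not anticipate any real obstacle.
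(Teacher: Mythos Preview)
Your proof is correct and follows essentially the same route as the paper: the paper cites Scharlau's result \cite{Scharlaupairs} that the invariant factors of $L_b^{-1}L_c$ come in identical pairs, then applies the shift by $2\,\id_V$ and Lemma \ref{lemma:invariantscrossedprod} exactly as you do. Your derivation of the pairing from the description of indecomposable $(-1,1)$-pairs is just a restatement of that cited fact, so the two arguments coincide.
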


\begin{proof}
It is known (see e.g.\ \cite{Scharlaupairs}) that the invariant factors of $L_b^{-1}L_c$
come in pairs, i.e.\ their sequence is of the form $q_1,q_1,q_2,q_2,\dots,q_r,q_r$.
Hence, the ones of $L_b^{-1}L_c+2\id_V$ are $q_1(t-2),q_1(t-2),q_2(t-2),q_2(t-2),\dots,q_r(t-2),q_r(t-2)$,
and the ones of $b \boxtimes c$ are $R(q_1(t-2)),R(q_1(t-2)),R(q_2(t-2)),R(q_2(t-2)),\dots,R(q_r(t-2)),R(q_r(t-2))$.
The conclusion follows easily.
\end{proof}

A small lemma will be useful before we move forward:

\begin{lemma}\label{lemma:splitLaurent}
One has $\F[t,t^{-1}]=\F[t+t^{-1}]+t \F[t+t^{-1}]$.
\end{lemma}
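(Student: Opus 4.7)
The plan is as follows. The inclusion $\F[t+t^{-1}] + t\,\F[t+t^{-1}] \subseteq \F[t,t^{-1}]$ is obvious, so I focus on the reverse inclusion. Write $s:=t+t^{-1}$ and $M:=\F[s]+t\,\F[s]$. Since $\F[t,t^{-1}]$ is $\F$-spanned by the monomials $t^k$ for $k \in \Z$, it suffices to show that $t^k \in M$ for every $k \in \Z$.

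Everything rests on two elementary identities read off directly from the definition of $s$: first, $t^{-1} = s-t$, which belongs to $M$; second, multiplying by $t$, $t^2 = st-1$, which also belongs to $M$. With these in hand, I would argue by induction on $|k|$. The base cases $k \in \{-1,0,1\}$ are immediate. For the step on positive $k$: if $t^k = p(s)+t\,q(s) \in M$, then
\[
t^{k+1} = t\,p(s) + t^2\,q(s) = -q(s) + \bigl(p(s)+s\,q(s)\bigr)\,t \in M.
\]
For the step on negative exponents: if $t^{-k} = p(s)+t\,q(s) \in M$ for some $k \geq 1$, then
\[
t^{-(k+1)} = (s-t)\bigl(p(s)+t\,q(s)\bigr) = s\,p(s) + s\,t\,q(s) - t\,p(s) - t^2\,q(s),
\]
and substituting $t^2 = st-1$ collapses this to $\bigl(s\,p(s)+q(s)\bigr) - p(s)\,t \in M$.

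The main obstacle is essentially nonexistent: once the two identities $t^{-1} = s-t$ and $t^2 = st-1$ are on the page, the lemma reduces to a routine two-sided induction. This presumably serves later as a tool for splitting a Laurent polynomial into its \emph{symmetric} and \emph{antisymmetric} parts with respect to the involution $t \mapsto t^{-1}$, which explains why only the coset representatives $1$ and $t$ are needed.
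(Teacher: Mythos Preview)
Your proof is correct and follows essentially the same approach as the paper's: both argue by induction on $|k|$ that every monomial $t^k$ lies in $M=\F[s]+t\,\F[s]$. The only cosmetic difference is that you run two separate one-sided inductions driven by the relations $t^2=st-1$ and $t^{-1}=s-t$, whereas the paper runs a single symmetric induction from $\lcro -n,n\rcro$ to $\lcro -(n+1),n+1\rcro$ by reading off the leading terms of $t(t+t^{-1})^n$ and $(t+t^{-1})^{n+1}$.
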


\begin{proof}
Set $E:=\F[t+t^{-1}]+t \F[t+t^{-1}]$.
Firstly, $t^0=(t+t^{-1})^0$ belongs to $E$. Now, let $n \in \N$ be such that $\forall k \in \lcro -n,n\rcro, \; t^k \in E$.
Then $t(t+t^{-1})^n=t^{n+1}$ modulo $\Vect(t^k)_{-n \leq k \leq n}$, and hence modulo $E$, which shows that $t^{n+1} \in E$.
Finally, $(t+t^{-1})^{n+1}=t^{n+1}+t^{-(n+1)}$ modulo $\Vect(t^k)_{-n \leq k \leq n}$, and again we deduce that $t^{-(n+1)} \in E$.
Hence, by induction $t^k$ belongs to the linear subspace $E$ for all $k \in \Z$, and the conclusion follows.
\end{proof}

Next, we compute the Wall invariants of $(H_V^1,b \boxtimes c)$ when $b$ and $c$ are symplectic forms.

\begin{lemma}\label{lemma:hyperbolicboxedprod}
Let $(b,c)$ be an indecomposable pair of symplectic forms on a vector space $V$.
Then all the Wall invariants of $(H_V^1 ,b \boxtimes c)$
are hyperbolic.
\end{lemma}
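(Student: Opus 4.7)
The plan is to exploit the explicit structure of the boxed-product to reduce the computation of the Wall invariants to a short calculation on the underlying pair. Set $U := b \boxtimes c$, $u' := L_b^{-1} L_c$ and $w := u' + 2\,\id_V$. Since $(b,c)$ is indecomposable, $u'$ has exactly two invariant factors, both equal to $p^n$ for some $p \in \Irr(\F) \cup \{t\}$ and some $n \geq 1$ (as recalled in Section \ref{section:invariantsboxedproduct}); hence by Lemma \ref{lemma:invariantscrossedprod} together with the easy identity $R(f^n) = R(f)^n$, the invariant factors of $U$ are $R(p(t-2))^n$, twice. A direct computation yields $R(t-2) = (t-1)^2$ and $R(t+2) = (t+1)^2$, whereas for any $p \in \Irr(\F) \setminus \{t,t+4\}$ the polynomial $R(p(t-2))$ is an irreducible palindromial distinct from $t \pm 1$, by the correspondence recalled in Section \ref{section:extension}. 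This splits into three cases: if $p = t$ or $p = t+4$, the primary invariants of $U$ are $(t \mp 1)^{2n}$ with multiplicity $2$, so $U$ has no palindromial primary invariant outside $\{t \pm 1\}$ and its Jordan blocks at $\pm 1$ all have even size $2n$; both the Hermitian and the quadratic Wall invariants are then trivially zero. Only the remaining case requires actual work: with $P := R(p(t-2))$ and $\L := \F[t,t^{-1}]/(P)$, the unique nontrivial Wall invariant is the Hermitian one $(H_V^1, U)_{P, n}$, living on an $\L$-vector space $V_{P,n}$ of $\L$-dimension $2$, and it will suffice to produce a nonzero isotropic vector.

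By Lemma \ref{lemma:boxprodv}, $V_U := U + U^{-1}$ acts on $V \times V^\star$ as $w \oplus w^t$. The polynomial $m$ associated to $P$ via $P(t) = t^{\deg m} m(t+t^{-1})$ equals $p(t-2)$, so $m(w) = p(u')$; in particular $\Ker m(V_U)^n = V \times V^\star$, and the image of $V \times \{0\}$ in $V_{P,n}$ identifies with $V/(\Ker p(u')^{n-1} + \im p(u')) = V/\Ker p(u')^{n-1}$, which has positive $\F$-dimension $2\deg p$. Fix $x \in V$ with $\xi := [(x,0)] \neq 0$ in $V_{P,n}$; the goal is to show that the $\L$-line $\L\xi$ is totally isotropic for $\overline{b_{P,n}}^\L$. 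Denoting by $\K$ the fixed subfield of $\L$ under the involution $\bullet$, we have $\L = \K \oplus \K \overline{t}$, so $\L \xi = \K \xi + \K U \xi$, and it suffices to check that the underlying $\F$-bilinear form $\overline{b_{P,n}}$ vanishes on $\K \xi$, on $\K U \xi$, and between those two subspaces.

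The $\K$-action on $V_{P,n}$ factors through $V_U = w \oplus w^t$, which preserves the image of $V \times \{0\}$, so $\K \xi$ consists of classes of vectors $(y,0)$, and $\overline{b_{P,n}}$ vanishes there because $H_V^1$ pairs $V \times \{0\}$ trivially with itself. Using the intertwining $L_c u' = (u')^t L_c$, elements of $\K U \xi$ are classes of $(y, L_c y)$ for $y \in \F[u']\cdot x$, and a direct computation gives $\overline{b_{P,n}}([(y, L_c y)], [(z, L_c z)]) = c(m(w)^{n-1} z, y) + c(y, m(w)^{n-1} z)$, which vanishes by skew-symmetry of $c$. The crux is the cross-pairing $\overline{b_{P,n}}([(y, 0)], [(z, L_c z)]) = c(y, m(w)^{n-1} z)$: writing $y = f(u')x$, $z = g(u')x$ and using the identity $c(y',z') = b(y', u'z')$ together with the $b$-selfadjointness of $u'$, it reduces to $b(x, h(u')\,x)$ for a single polynomial $h$. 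Since $h(u')$ is $b$-selfadjoint and $b$ is skew-symmetric, $b(x, h(u') x) = b(h(u')x, x) = -b(x, h(u')x)$, which forces it to vanish in characteristic different from $2$. This cross-pairing computation is the main technical point; once it is established, $\L \xi$ is a totally isotropic $\L$-line in $V_{P,n}$, and the Hermitian Wall invariant is therefore hyperbolic.
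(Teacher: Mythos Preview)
Your main computation in the irreducible case is correct and follows essentially the same route as the paper: both arguments pick $X=(x,0)$, use the splitting $\L=\K\oplus\K\overline{t}$ (the paper states this as Lemma~\ref{lemma:splitLaurent}), and reduce to the key observation that $b(x,h(u')x)=0$ for every polynomial $h$, which holds because $u'$ is $b$-selfadjoint while $b$ is skewsymmetric. Your organization into three blocks ($\K\xi\times\K\xi$, $\K U\xi\times\K U\xi$, cross terms) is a slight variant of the paper's two-term split, but the content is the same.

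There is, however, a genuine gap in your case analysis. You assert that for every $p\in\Irr(\F)\setminus\{t,t+4\}$ the polynomial $R(p(t-2))$ is irreducible, citing the correspondence of Section~\ref{section:extension}. That correspondence goes \emph{from} irreducible even-degree palindromials (other than $t\pm 1$) \emph{to} irreducible $m\neq t\pm 2$; it is injective in that direction but not surjective. For a concrete counterexample, over $\F_7$ take $p=t+1$, so $m:=p(t-2)=t-1$; then $R(m)=t^2-t+1=(t-3)(t-5)$ with $(t-3)^\sharp=t-5$. In general $R(m)$ may factor as $qq^\sharp$ with $q$ irreducible and $q\neq q^\sharp$, and this third case is simply missing from your breakdown.

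The repair is immediate and exactly what the paper does: when $R(m)=qq^\sharp$ with $q\neq q^\sharp$, the primary invariants of $U$ are $q^n,q^n,(q^\sharp)^n,(q^\sharp)^n$, none of which is attached to an irreducible palindromial nor to $t\pm 1$, so every Wall invariant of $(H_V^1,U)$ is zero and the conclusion is trivial. (Incidentally, since $c$ is symplectic and hence non-degenerate, $u'$ is invertible and the case $p=t$ cannot occur; including it is harmless but unnecessary.) Once you insert this missing case, your proof is complete and aligned with the paper's.
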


\begin{proof}
Here, there exists an irreducible monic polynomial $m \neq t-2$ and an integer $r \geq 1$ such that
 $u:=L_b^{-1} L_c+2\id_V$ has exactly two primary invariants: $m^r$ and $m^r$.
 Lemma \ref{lemma:invariantscrossedprod} shows that the invariant factors of $b \boxtimes c$ are $R(m)^r$ and $R(m)^r$.

We note that $p:=R(m)$ is a palindromial of even degree, and as $m$ is irreducible either $p=qq^\sharp$ where $q$
is a monic irreducible polynomial such that $q \neq q^\sharp$, or $p$ is an even power of $t+1$.
In the last case, all the Wall invariants of $(H_V^1 ,b \boxtimes c)$ vanish.
In the second case, the primary invariants of $b \boxtimes c$
are $q^r$, $q^r$, $(q^\sharp)^r$ and $(q^\sharp)^r$, and again all the Wall invariants of $(H_V^1 ,b \boxtimes c)$ vanish.

Assume now that $p$ is irreducible.
Then $b \boxtimes c$ has exactly two primary invariants, namely $p^r$ and $p^r$, and
exactly one Hermitian invariant of $(H_V^1 ,b \boxtimes c)$ is non-trivial, namely $(H_V^1 ,b \boxtimes c)_{p,r}$:
it is a Hermitian form defined on a $2$-dimensional vector space over the field $\L:=\F[t,t^{-1}]/(p)$ equipped with the involution $\lambda \mapsto \lambda^\bullet$ that takes the class $\overline{t}$ to its inverse.

Set
$$v:=(b \boxtimes c)+(b \boxtimes c)^{-1}=2\id+w_bv_c+v_c w_b.$$
Remember from Lemma \ref{lemma:boxprodv} that
$$\forall (x,\varphi)\in V \times V^\star, \; v(x,\varphi)=(u(x),\varphi \circ u).$$
It follows that $v$ has exactly four primary invariants, all equal to $m^r$.
Consequently, $\Ker m(v)^{r-1}=\im m(v)$, and hence the Wall invariant $(H_V^1 ,b \boxtimes c)_{p,r}$ is defined on the quotient space
$(V \times V^\star)/\im m(v)$. Besides $\im m(v)=\im m(u) \times \im m(u)^t$.

Let us choose $x \in V \setminus \im m(u)$, set $X:=(x,0)$ and denote by $\overline{X}$ the class of $X$ modulo $\im m(v)$.
Note that $\overline{X} \neq 0$. We shall prove that $X$ is isotropic for $(H_V^1 ,b \boxtimes c)_{p,r}$, which will complete the proof.

Next, we equip $V \times V^\star$ by the structure of $\F[t,t^{-1}]$-module induced by $b \boxtimes c$, and
$V$ with the structure of $\F[t]$-module induced by $u$.

Noting that $v(x,0)=(u(x),0)$, we obtain
$\forall q \in \F[t], \; q(t+t^{-1})\,(x,0)=(q\,x,0)$ and it follows that
\begin{equation}\label{eq:even}
\forall q \in \F[t], \; H_V^1\bigl((x,0),q(t+t^{-1})\,(x,0)\bigr)=0.
\end{equation}
Let $q \in \F[t]$. Then, $tq(t+t^{-1})\,(x,0)=\bigl(q\,x,L_c(q\,x)\bigr)$.
Hence
\begin{align*}
H_V^1\bigl((x,0),tq(t+t^{-1}).(x,0)\bigr) & =L_c(q(u)[x])[x] \\
& =c(x,q(u)[x]) \\
& =b(x,(u-2\id)q(u)[x]).
\end{align*}
Yet $b$ and $c$ are skewsymmetric and $u$ is $b$-selfadjoint, and hence $b(x,u^k(x))=0$ for every integer
$k \geq 0$: indeed, if $k=2l$ for some integer $l$, one writes $b(x,u^k(x))=b(u^l(x),u^l(x))=0$; if $k=2l+1$ for some integer $l$, one writes $$b(x,u^k(x))=b(u^{l}(x),u^{l+1}(x))=c(u^l(x),u^{l}(x))+2b(u^{l}(x),u^{l}(x))=0.$$
Hence,
\begin{equation}\label{eq:odd}
H_V^1\bigl((x,0),tq(t+t^{-1}).(x,0)\bigr)=b\bigl(x,((t-2)q)(u)[x]\bigr)=0.
\end{equation}
With the help of Lemma \ref{lemma:splitLaurent},
combining \eqref{eq:even} and \eqref{eq:odd} yields $H_V^1(X,\lambda\,X)=0$ for all $\lambda \in \F[t,t^{-1}]$.

It follows that, in the quotient $\F[t,t^{-1}]$-module $V/\im m(v)$,
the class $\overline{X}$ generates a submodule that is orthogonal to itself
for the bilinear form induced by $(Y,Z) \mapsto H_V^1(Y,m^{r-1}\,Z)$.

By coming back to the definition of the Hermitian Wall invariant of order $r$ with respect to $p$,
we conclude that $(H_V^1,b \boxtimes c)_{p,r}(\overline{X},\overline{X})=0$, which completes the proof.
\end{proof}

Finally, we compute the Wall invariants of $(H_V^{-1},b \boxtimes c)$ when $b$ and $c$ are symmetric forms.

\begin{Def}
A symmetric bilinear form $B$ on a vector space $V$ is said to \textbf{represent} a non-zero scalar $\alpha$
if there exists a vector $x$ of $V$ such that $B(x,x)=\alpha$.
\end{Def}

\begin{lemma}\label{lemma:skewrepresentabledim1}
Let $(b,c)$ be a pair of non-degenerate symmetric bilinear forms on a vector space $V$.
Assume that $u:=2\id_V+L_b^{-1}L_c$ is cyclic with minimal polynomial $m^r$ for some monic irreducible polynomial $m$
and some $r \geq 1$. Assume furthermore that $p:=R(m)$ is irreducible.
Assume finally that the quadratic invariant $(b,u)_{m,r}$ represents, for some polynomial $\alpha(t)\in \F[t]$ (not divisible by $m$),
the class of $\alpha(t)$ modulo $m$. \\
Then $(H_V^{-1},b \boxtimes c)$ has exactly one nontrivial Wall invariant, namely $(H_V^{-1},b \boxtimes c)_{p,r}$, which is defined on a
$1$-dimensional vector space, and it represents the element $\beta:=(\overline{1-t})(\overline{1+t})^{-1}\,\overline{\alpha(t+t^{-1})}$ of the quotient field $\F[t,t^{-1}]/(p)$.
\end{lemma}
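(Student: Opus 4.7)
The plan is to mimic the proof of Lemma \ref{lemma:hyperbolicboxedprod}: exhibit a specific generator $\overline Y$ of the $1$-dimensional $\L$-module on which the Wall invariant is defined, and evaluate the form there directly against its definition.

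First, I would apply Lemma \ref{lemma:invariantscrossedprod} to see that $b\boxtimes c$ is cyclic with invariant factor $R(m^r)=R(m)^r=p^r$, so $p^r$ is its unique primary invariant; since $p$ is palindromic of even degree $2d\ge 2$ and $p$ is irreducible (hence $p\ne t\pm 1$), the only potentially nontrivial Wall invariant of $(H_V^{-1},b\boxtimes c)$ is the Hermitian one indexed by $(p,r)$. Setting $v:=(b\boxtimes c)+(b\boxtimes c)^{-1}$, Lemma \ref{lemma:boxprodv} gives $v(x,\varphi)=(u(x),u^t(\varphi))$, whence $\Ker m(v)^r=V\times V^\star$ and $\im m(v)=\im m(u)\times\im m(u)^t$, so $V_{p,r}=(V\times V^\star)/\im m(v)$ has $\F$-dimension $2d$ and $\L$-dimension $1$.

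Next I would pick $y\in V$ representing a class $\overline y\in V/\im m(u)=V_{m,r}$ with $(b,u)_{m,r}(\overline y,\overline y)=\overline{\alpha(t)}$; since $m\nmid\alpha$, $y\notin\im m(u)$, so $Y:=(y,0)$ has nonzero image in $V_{p,r}$ and thus generates it over $\L$. Direct computation yields $(b\boxtimes c)(y',0)=(y',L_c(y'))$ while $v(y',0)=(u(y'),0)$, so by Lemma \ref{lemma:splitLaurent} any $\tilde\lambda\in\F[t,t^{-1}]$ decomposes as $\tilde\lambda=\lambda_0(t+t^{-1})+t\lambda_1(t+t^{-1})$ with $\lambda_0,\lambda_1\in\F[t]$, and the $b\boxtimes c$-action gives
$$\tilde\lambda\cdot Y=\bigl((\lambda_0(u)+\lambda_1(u))(y),\;L_c(\lambda_1(u)y)\bigr).$$
Because $m(v)$ commutes with this action, the same formula applied to $m(u)^{r-1}y$, together with $L_c=L_b\circ(u-2\id)$ and the defining identity of $(b,u)_{m,r}$, produces
$$H_V^{-1}\bigl(Y,m(v)^{r-1}(\tilde\lambda Y)\bigr)=-c(y,\lambda_1(u)m(u)^{r-1}y)=-e_m\bigl((\overline t-2)\lambda_1(\overline t)\overline{\alpha(t)}\bigr).$$

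Finally I would compare with $f_p(\tilde\lambda\beta)$. Non-degeneracy of $c$ forbids $0$ as an eigenvalue of $L_b^{-1}L_c$, i.e.\ $-2$ as a root of $m$, hence $-1$ as a root of $p$; so $\overline{1+t}$ is a unit of $\L$ and $\beta$ is well defined, and a quick check yields $\beta^\bullet=-\beta$. Therefore $\tilde\lambda\beta+(\tilde\lambda\beta)^\bullet=(\tilde\lambda-\tilde\lambda^\bullet)\beta=(t-t^{-1})\lambda_1(t+t^{-1})\beta$, and the rational identity
$$(t-t^{-1})(1-t)(1+t)^{-1}=-t^{-1}(t-1)^2=2-(t+t^{-1})$$
places this product in $\K=\F[\overline{t+t^{-1}}]$ and makes $\psi$ send it to $(2-\overline t)\lambda_1(\overline t)\overline{\alpha(t)}$; applying $e_m$ yields $f_p(\tilde\lambda\beta)=-e_m\bigl((\overline t-2)\lambda_1(\overline t)\overline{\alpha(t)}\bigr)$, matching the earlier computation for every $\tilde\lambda$, so $(H_V^{-1},b\boxtimes c)_{p,r}(\overline Y,\overline Y)=\beta$. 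The main obstacle is the bookkeeping between the two parallel algebras $\F[t]$ (acting via $u$ on $V$) and $\F[t,t^{-1}]$ (acting via $b\boxtimes c$ on $V\times V^\star$); the rational identity above is the ``bridge'' that converts the $\K$-anti-invariant component of the $\L$-action into a $\K$-element expressible through the quadratic invariant of $u$.
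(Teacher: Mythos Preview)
Your proof is correct and follows essentially the same route as the paper's: pick a generator $Y=(y,0)$ of the $1$-dimensional $\L$-module, compute $H_V^{-1}(Y,m(v)^{r-1}\tilde\lambda Y)$ using the $\F[t+t^{-1}]\oplus t\F[t+t^{-1}]$ decomposition of Lemma~\ref{lemma:splitLaurent}, and match it against $f_p(\tilde\lambda\beta)$ via the rational identity $(t-t^{-1})(1-t)(1+t)^{-1}=2-(t+t^{-1})$. The paper's only cosmetic difference is that it splits into the basis $\F[t+t^{-1}]\oplus (t-t^{-1})\F[t+t^{-1}]$ (the ``even/odd'' decomposition under $\bullet$) rather than $\F[t+t^{-1}]\oplus t\F[t+t^{-1}]$, which makes the vanishing on the symmetric part slightly more transparent; otherwise the computations are the same.

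One small slip: non-degeneracy of $c$ says $0$ is not an eigenvalue of $L_b^{-1}L_c=u-2\id$, hence $2$ (not $-2$) is not a root of $m$. This does not affect your argument, because the invertibility of $\overline{1+t}$ in $\L$ follows directly from the hypothesis that $p$ is irreducible of degree $2d\ge 2$ (so $p(-1)\ne 0$); you may simply cite that instead.
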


\begin{proof}
This time around, Lemma \ref{lemma:qcorblockinvariants} shows that the sole invariant factor of $b \boxtimes c$ is $p^r$.
Here $p$ is an irreducible palindromial of even degree.

We consider the fields $\mathbb{M}:=\F[t]/(m)$ and $\mathbb{L}:=\F[t,t^{-1}]/(p)$.
For a Laurent polynomial $a \in \F[t,t^{-1}]$, we denote by $\overline{a(t)}^{\mathbb{L}}$ its class modulo $p$, and
for a standard polynomial $a \in \F[t]$, we denote by $\overline{a(t)}^{\mathbb{M}}$ its class modulo $m$.

Set $v:=(b \boxtimes c)+(b \boxtimes c)^{-1}$. By Lemma \ref{lemma:boxprodv}, we have
$$\forall (x,\varphi) \in V \times V^\star, \; v(x,\varphi)=(u(x),\varphi \circ u).$$
Just like in the proof of Lemma \ref{lemma:hyperbolicboxedprod}, the form $\overline{B}$ that is used to construct the Wall invariant
$(H_V^{-1},b \boxtimes c)_{p,r}$ is defined on the quotient space $(V \times V^\star)/(\im m(u) \times \im m(u)^t)$,
whereas the one that is used to construct the quadratic invariant $(b,u)_{m,r}$ is defined
on the quotient space $V /\im m(u)$. The assumption on $m$ shows that we can choose
a vector $x$ of $V \setminus \im m(u)$ such that
$$\forall q \in \F[t], \; b\bigl(x,(qm^{r-1})(u)[x]\bigr)=e_m\bigl(\overline{\alpha(t)q(t)}^{\mathbb{M}}\bigr).$$
To simplify the notation, we now endow $V \times V^\star$ with the structure of $\F[t,t^{-1}]$-module induced by $b \boxtimes c$,
whereas $V$ is endowed with the structure of $\F[t]$-module induced by $u$.
Let us consider the vector $X:=(x,0) \in V \times V^\star$.
Note that $p^{r-1}\,X=(m^{r-1}\,x,0)$.

Let $s \in \F[t]$.
Noting that $\beta$ is skew-Hermitian in $\F[t,t^{-1}]/(p)$,
we find, like in the proof of Lemma \ref{lemma:hyperbolicboxedprod}, that
$$f_p(\beta \overline{s(t+t^{-1})})=0=H_V^{-1}\bigl((x,0),(sm^{r-1})(t+t^{-1})\,(x,0)\bigr).$$
Now, set
$$q:=(t-t^{-1})\,s(t+t^{-1})=2t s(t+t^{-1})-(ts)(t+t^{-1}).$$
Then
\begin{align*}
H_V^{-1}\bigl((x,0),m^{r-1}(t+t^{-1})q\,(x,0)\bigr)
& =2 H_V^{-1}\bigl((x,0),(sm^{r-1} x,L_c(s m^{r-1}\, x))\bigr) \\
& =-2\, L_c(s m^{r-1}\, x)[x] \\
& =-2\, c(x,s m^{r-1}x) \\
& =-2\,b(x,(t-2)s m^{r-1}x) \\
& = -2\, e_m\bigl(\overline{(t-2)s\alpha}^{\mathbb{M}}\bigr) \\
& = - f_p\Bigl(\overline{(t+t^{-1}-2)\, s(t+t^{-1})\, \alpha(t+t^{-1})}^{\mathbb{L}}\Bigr) \\
& = f_p\Bigl(\beta\,\overline{q(t)}^{\mathbb{L}}\Bigr),
\end{align*}
where we have used the identity
$$(1-t)(1+t)^{-1} (t-t^{-1})=t^{-1} (1-t)(1+t)^{-1} (t+1)(t-1)=-t^{-1}(t-1)^2=-(t+t^{-1}-2).$$
Using Lemma \ref{lemma:splitLaurent}, we deduce by linearity that
$$\forall q \in \F[t,t^{-1}], \;
H_V^{-1}\bigl((x,0),(m(t+t^{-1}))^{r-1}) q\,(x,0)\bigr)=f_p\bigl(\beta\,\overline{q}^\L\bigr),$$
and we conclude that $\beta=(H_V^{-1},b \boxtimes c)_{p,r}(\overline{X},\overline{X})$,
where $\overline{X}$ stands for the class of $X$ in $(V \times V^\star)/\im p(b \boxtimes c)$.
Hence, the claimed result is proved.
\end{proof}

\section{Symplectic groups}\label{section:symplectic}

\subsection{Decompositions into two factors}

Here, we prove Theorem \ref{theo:symplectic}.
So, we take a $-1$-isopair $(b,u)$ such that $u$ has no Jordan cell of odd size for the eigenvalue $-1$.
We will prove that $u$ is the product of two $U_2$-elements of $\Sp(b)$.

To do so, we can limit our study to the case where $(b,u)$ is indecomposable as a $-1$-isopair.
Then, there are four possibilities:
\begin{itemize}
\item Case 1. The invariant factors of $u$ read $(t-1)^r,(t-1)^r$ for some odd integer $r \geq 1$.
\item Case 2. The primary invariants of $u$ read $p^r,(p^{\sharp})^r$ for some $p \in \Irr(\F)$ such that $p \neq p^\sharp$ and some
integer $r \geq 1$.
\item Case 3. $u$ is cyclic with minimal polynomial $(t-\eta)^r$ for some even integer $r \geq 1$ and some $\eta \in \{1,-1\}$.
\item Case 4. $u$ is cyclic with minimal polynomial $p^r$ for some palindromial $p \in \Irr(\F) \setminus \{t\pm 1\}$ and some integer $r \geq 1$.
\end{itemize}

\noindent \textbf{Case 1. The invariant factors of $u$ equal $(t-1)^r$, $(t-1)^r$ for some odd integer $r \geq 1$.} \\
Let us choose a cyclic endomorphism $v$ of a vector space $W$, with minimal polynomial $(t-1)^r$.
Then it is known that $h(v)$ has exactly two invariants factors, namely $(t-1)^r$ and $(t-1)^r$.
By the classification of conjugacy classes in symplectic groups, it follows that $(b,u)$ is isometric to $H_{-1}(v)$.
Since $v$ is the product of two $U_2$-elements of $\GL(W)$, it follows from Proposition \ref{prop:hyperbolicexpansionU2split} that $H_{-1}(v)$
is $U_2$-splittable, and hence so is $(b,u)$.

\vskip 2mm
\noindent \textbf{Case 2. The primary invariants of $u$ equal $p^r,(p^{\sharp})^r$ for some $p \in \Irr(\F)$ such that $p \neq p^\sharp$ and some
integer $r \geq 1$.}

Let us write $pp^\sharp=R(m)$ where $m \in \F[t]$. By the classification of pairs of symmetric bilinear forms, we can find a pair $(B,C)$
of symmetric bilinear forms on a vector space $W$ such that $2\id_W+L_B^{-1} L_C$ is cyclic with minimal polynomial $m^r$.
By Lemma \ref{lemma:invariantscrossedprod}, the endomorphism $B \boxtimes C$ is cyclic with minimal polynomial $(pp^\sharp)^r$.
By the classification of $-1$-isopairs, it follows that $(H_W^{-1},B \boxtimes C) \simeq (b,u)$.
Since $(H_W^{-1},B \boxtimes C)$ is $U_2$-splittable, we conclude that so is $(b,u)$.

\vskip 2mm
\noindent \textbf{Case 3. $u$ is cyclic with minimal polynomial $(t-\eta)^r$ for some $\eta \in \{1,-1\}$ and some even integer $r>0$.} \\
By the classification of pairs of symmetric bilinear forms, we can find a pair $(B,C)$
of symmetric bilinear forms on a vector space $W$ such that $2\id_W+L_B^{-1} L_C$ is cyclic with minimal polynomial $(t-2\eta)^{r/2}$.
By Lemma \ref{lemma:invariantscrossedprod}, the endomorphism $B \boxtimes C$ is cyclic with minimal polynomial $(t-\eta)^r$.
Let us choose a nonzero value $\alpha$ represented by the nonzero Wall invariant $(b,u)_{t-\eta,r}$, and
a nonzero value $\beta$ represented by the nonzero Wall invariant $(H_W^{-1},B \boxtimes C)_{t-\eta,r}$.
Then, for $\lambda:=\alpha \beta^{-1}$ we have that the $1$-dimensional Wall invariants $(\lambda H_W^{-1},B \boxtimes C)_{t-\eta,r}$
and $(b,u)_{t-\eta,r}$ are equivalent. It follows that $(b,u) \simeq (\lambda H_W^{-1},B \boxtimes C)$.
Since $\lambda H_W^{-1}$ and $H_W^{-1}$ have the same symplectic group, we obtain that
$(\lambda H_W^{-1},B \boxtimes C)$ is $U_2$-splittable because $(H_W^{-1},B \boxtimes C)$ is $U_2$-splittable.
Hence, $(b,u)$ is $U_2$-splittable.

\vskip 2mm
\noindent \textbf{Case 4. $u$ is cyclic with minimal polynomial $p^r$ for some irreducible palindromial $p \in \Irr(\F) \setminus \{t\pm 1\}$ and some $r \geq 1$.}

We can write $p=R(m)$ for some irreducible polynomial $m \neq t \pm 2$.
The pair $(b,u)$ has a sole non-zero Wall invariant, namely $(b,u)_{p,r}$.
This invariant is a skew-Hermitian form on a $1$-dimensional vector space over $\L:=\F[t,t^{-1}]/(p)$.
Let us take a nonzero value $\beta$ represented by this form.
In $\L$, the element $(\overline{1-t})(\overline{1+t})^{-1}$ is non-zero and skew-Hermitian, so
$\beta=(\overline{1-t})(\overline{1+t})^{-1}\overline{\alpha(t+t^{-1})}$ for some polynomial $\alpha \in \F[t]$ that is not divided by $m$.
It follows from the classification of pairs of symmetric bilinear forms that we can find a vector space $W$ together with a pair
$(B,C)$ of symmetric bilinear forms on $W$ such that $U:=2\id_W+L_B^{-1} L_C$ is cyclic with minimal polynomial $m^r$
and the class of $\alpha$ in $\F[t]/(m)$ is represented by the quadratic invariant $(B,U)_{m,r}$.
By Lemma \ref{lemma:skewrepresentabledim1}, we conclude that $\beta$ is represented by $(H_W^{-1}, B \boxtimes C)_{p,r}$.

Hence, by the classification of $1$-isopairs we conclude that $(b,u) \simeq (H_V^{-1}, B \boxtimes C)$.
And since $(H_V^{-1}, B \boxtimes C)$ is $U_2$-splittable we conclude that so is $(b,u)$.

This completes the proof of Theorem \ref{theo:symplectic}.

\subsection{Decompositions into three factors}

Here, we prove that every element of a symplectic group is the product of three $U_2$-elements (Theorem \ref{theo:symplectic3}).
It will suffice to consider the situation of an indecomposable $-1$-isopair $(b,u)$.
By Theorem \ref{theo:symplectic}, $u$ is the product of two, and hence of three $U_2$-elements of $\Sp(b)$
(simply insert the identity) unless $u+\id$ is nilpotent with exactly two Jordan cells, both of odd size $r \geq 1$.

Assume now that $u+\id$ is nilpotent with exactly two Jordan cells, both of odd size $r \geq 1$.
Assume first that $r=1$. Then $u=-\id$, the underlying vector space $V$ has dimension $2$
and $\Sp(b)=\SL(V)$.
Choosing a unipotent $u_1 \in \SL(V) \setminus \{\id_V\}$, we see that
$-u_1^{-1}$ has a sole Jordan cell, which is of size $2$ and associated with the eigenvalue $-1$.
It follows from Theorem \ref{theo:symplectic} that $-u_1^{-1}=u_2u_3$ for $U_2$-elements $u_2,u_3$ of $\Sp(b)$, and hence
$-\id_V=u_1u_2u_3$ is the product of three $U_2$-elements.

Assume finally that $r>2$.
We have $(b,u) \simeq H_1(v)$, where $v$ is a cyclic automorphism of a vector space $V$, with minimal polynomial $(t+1)^r$.

We recall the following lemma (see e.g. proposition 3.5 in \cite{dSPinvol3}):

\begin{lemma}\label{lemma:uadapted}
Let $v$ be a cyclic automorphism of a vector space $V$ of dimension $n$,
and let $p \in \F[t]$ be a monic polynomial of degree $n$ such that $p(0)=(-1)^n \det v$.
Then there exists a $U_2$-automorphism $u_1$ of $V$ such that $u_1^{-1} v$
is cyclic with minimal polynomial $p$.
\end{lemma}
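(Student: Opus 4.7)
The plan is to exhibit $u_1$ explicitly by working in a cyclic basis for $v$. Since $v$ is cyclic of dimension $n$, pick a cyclic vector $e$, so that $\bfB:=(e,v(e),\dots,v^{n-1}(e))$ is a basis of $V$ in which $v$ is represented by the companion matrix $C_q$ of its (minimal, hence characteristic) polynomial $q$. The constant term of $q$ equals $(-1)^n\det v$, which by hypothesis equals $p(0)$. Let $w$ be the endomorphism of $V$ whose matrix in $\bfB$ is the companion matrix $C_p$; since $p(0)\neq 0$, this $w$ is invertible, and it is cyclic with minimal polynomial $p$ by construction. Set $u_1:=v\,w^{-1}$, so that $u_1^{-1}v=w$ is automatically cyclic with minimal polynomial $p$.

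It remains only to check that $u_1$ is a $U_2$-element. The key observation is that the companion matrices $C_q$ and $C_p$ differ only in their last column, and within that column the top entry agrees (it is $-p(0)=-q(0)$ in both cases, thanks to the hypothesis). Hence $v-w$ has rank at most one, and consequently $u_1-\id=(v-w)\,w^{-1}$ has rank at most one. Write $u_1-\id=x\otimes\varphi$ with $x\in V$ and $\varphi\in V^\star$; then $(u_1-\id)^2=\varphi(x)\,(u_1-\id)$, so it suffices to show that $\varphi(x)=0$. Now the eigenvalues of $u_1$ are $1$ (with multiplicity at least $n-1$) and $1+\varphi(x)$, whence $\det u_1=1+\varphi(x)$. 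But $\det v=(-1)^n q(0)=(-1)^n p(0)=\det w$, giving $\det u_1=1$ and therefore $\varphi(x)=0$, i.e.\ $(u_1-\id)^2=0$.

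The main obstacle, if any, is recognizing that one should take $w$ to be the companion matrix of $p$ \emph{in the very basis that already puts $v$ in companion form}. Once that choice is made, the rest is a short rank-and-determinant computation: the alignment of constant terms forced by $p(0)=(-1)^n\det v$ makes $v-w$ rank one rather than rank two, and makes $\det u_1=1$ rather than some spurious scalar, which together pin down the $U_2$ condition. No induction, case analysis, or appeal to the Wall invariants is needed.
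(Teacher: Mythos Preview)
Your proof is correct. The paper does not actually prove this lemma inline; it merely recalls it with a citation to proposition~3.5 of \cite{dSPinvol3}, so there is no in-paper argument to compare against. Your companion-matrix approach is the natural and direct route.

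One small expository inaccuracy in your closing paragraph: the hypothesis $p(0)=q(0)$ is \emph{not} what forces $v-w$ to have rank at most one. Since $C_q$ and $C_p$ agree on their first $n-1$ columns and differ only in the last column, $v-w$ has rank at most one regardless of whether the top entries of that column match. The hypothesis $p(0)=q(0)$ enters only at the determinant step, where it gives $\det v=\det w$ and hence $\det u_1=1$, forcing $\varphi(x)=0$. Your actual argument in the second paragraph uses the hypothesis in exactly the right place; only the informal summary misattributes its role.
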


Since $r \geq 2$, we can find a monic polynomial $p \in \F[t]$ such that $p(0)=1$
and $p(-1) \neq 0$ (e.g.\ $p(t)=t^r+(-1)^r t+1$).
By Lemma \ref{lemma:uadapted}, we can choose a $U_2$-automorphism $v$ of $V$ such that $u_1^{-1} v$ is cyclic with minimal polynomial $p$.
Hence $h(u_1^{-1} v)$ belongs to $\Sp(H_V^{-1})$ but $-1$ is outside its spectrum. By Theorem \ref{theo:symplectic},
$h(u_1^{-1} v)$ is the product of two $U_2$-elements of $\Sp(H_V^{-1})$. Hence $h(u)=h(u_1) h(u_1^{-1} v)$
is the product of three  $U_2$-elements of $\Sp(H_V^{-1})$. We conclude that $u$ is the product of three $U_2$-elements of $\Sp(b)$.
This completes the proof of Theorem \ref{theo:symplectic3}.

\section{Orthogonal groups: automorphisms whose spectrum does not contain $1$}\label{section:orthogonalnonunipotent}

In this short section, we characterize, among the $1$-isopairs $(b,u)$ in which $u-\id$ is invertible, the $U_2$-splittable ones.
Here, Theorem \ref{theo:orthogonal1noeigenvalue} is enriched as follows:

\begin{theo}\label{theo:orthogonal1noeigenvalueenriched}
Let $(b,u)$ be a $1$-isopair such that $u-\id$ is invertible.
The following conditions are equivalent:
\begin{enumerate}[(i)]
\item $u$ is the product of two $U_2$-elements of $\Ortho(b)$.
\item All the Jordan numbers of $u$ are even, all the Wall invariants of $(b,u)$ are hyperbolic, and
$u$ has no Jordan cell of odd size for the eigenvalue $-1$.
\item $(b,u) \simeq H_1(v)$ for some automorphism $v$ of a vector space
such that $v$ is similar to its inverse and has no Jordan cell of odd size for the eigenvalue $-1$.
\item $(b,u) \simeq H_1(v)$ for some automorphism $v$ of a vector space $V$ such that $v$
is the product of two $U_2$-automorphisms of $V$.
\end{enumerate}
\end{theo}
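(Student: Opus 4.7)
The plan is to prove the cycle (iv) $\Rightarrow$ (i) $\Rightarrow$ (ii) $\Rightarrow$ (iii) $\Rightarrow$ (iv), which will establish the four-way equivalence.

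The easy implications are (iv) $\Rightarrow$ (i) and (iii) $\Leftrightarrow$ (iv). The first is immediate from Proposition \ref{prop:hyperbolicexpansionU2split}, and the second follows from the Wang-Wu-Botha theorem (Theorem \ref{theo:Botha}) applied to $v$. So the real content lies in (i) $\Rightarrow$ (ii) and in (ii) $\Rightarrow$ (iii).

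For (i) $\Rightarrow$ (ii): Assume $u = u_1 u_2$ with $u_1, u_2$ being $U_2$-elements of $\Ortho(b)$. Since these are also $U_2$-elements of $\GL(V)$, the Wang-Wu-Botha theorem already yields that $u$ has no odd Jordan cell for the eigenvalue $-1$. Because $u - \id$ is invertible, Proposition \ref{prop:caracboxedprod} provides an isometry $(b, u) \simeq (H_W^1, B \boxtimes C)$ for some pair $(B, C)$ of non-degenerate skewsymmetric bilinear forms on a space $W$. Decompose $(B, C)$ into indecomposable pairs and use Lemma \ref{cpdirectsumlemma} to write $(H_W^1, B \boxtimes C)$ as the orthogonal sum of the isopairs $(H_{W_i}^1, B_i \boxtimes C_i)$ associated to the indecomposable summands. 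On each such summand, Lemma \ref{lemma:jordannumbersboxedprod} shows that all Jordan numbers are even, while Lemma \ref{lemma:hyperbolicboxedprod} shows that all Hermitian Wall invariants are hyperbolic; both properties pass to the orthogonal sum.

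For (ii) $\Rightarrow$ (iii): Construct an automorphism $v$ of an auxiliary vector space by prescribing its Jordan numbers as follows. For each palindromial $p \in \Irr(\F) \setminus \{t-1, t+1\}$ and each $r \geq 1$, set $n_{p, r}(v) := n_{p, r}(u)/2$. For each unordered pair $\{p, p^\sharp\}$ with $p \in \Irr(\F)$ non-palindromial, set $n_{p, r}(v) := n_{p^\sharp, r}(v) := n_{p, r}(u)/2$ (recall $n_{p, r}(u) = n_{p^\sharp, r}(u)$ because $u$ is similar to $u^{-1}$). Set $n_{t-1, r}(v) := 0$ (since $u - \id$ is invertible) and $n_{t+1, r}(v) := n_{t+1, r}(u)/2$, which is zero for odd $r$. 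Evenness of the Jordan numbers ensures all of this is well defined. Then $v$ is similar to $v^{-1}$ and has no Jordan cell of odd size for $-1$, and a direct computation using $n_{p, r}(h(v)) = n_{p, r}(v) + n_{p^\sharp, r}(v)$ shows that $h(v)$ has the same Jordan numbers as $u$. By Proposition \ref{prop:Wallinvariantshyperbolic}, the Hermitian Wall invariants of $H_1(v)$ are all hyperbolic, hence equivalent to those of $(b, u)$. Finally, the quadratic Wall invariants $(b, u)_{t-1, r}$ vanish because $u - \id$ is invertible, and the $(b, u)_{t+1, r}$ vanish for odd $r$ by the no-odd-$(-1)$-Jordan-cell condition; the analogous invariants of $H_1(v)$ vanish for the same reasons. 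Wall's classification theorem for $1$-isopairs then yields $(b, u) \simeq H_1(v)$.

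The principal obstacle is the implication (i) $\Rightarrow$ (ii), whose proof depends crucially on the structural Proposition \ref{prop:caracboxedprod} identifying $U_2$-splittable isopairs with boxed-products, together with the somewhat delicate invariant computations of Lemmas \ref{lemma:jordannumbersboxedprod} and \ref{lemma:hyperbolicboxedprod}. The implication (ii) $\Rightarrow$ (iii), while combinatorially involved, reduces to bookkeeping Jordan numbers and checking that the quadratic Wall invariants vanish on both sides for trivial reasons once (ii) is assumed.
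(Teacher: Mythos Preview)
Your proof is correct and follows essentially the same route as the paper: the cycle (iv) $\Rightarrow$ (i) via Proposition~\ref{prop:hyperbolicexpansionU2split}, (iii) $\Rightarrow$ (iv) via the Wang--Wu--Botha theorem, (i) $\Rightarrow$ (ii) via Proposition~\ref{prop:caracboxedprod} together with Lemmas~\ref{lemma:jordannumbersboxedprod} and~\ref{lemma:hyperbolicboxedprod}, and (ii) $\Rightarrow$ (iii) by halving the Jordan numbers and invoking Wall's classification. Your explicit reduction to indecomposable summands in (i) $\Rightarrow$ (ii) is slightly more careful than the paper's direct citation of Lemma~\ref{lemma:hyperbolicboxedprod} (which is stated only for indecomposable pairs), but the argument is the same.
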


First of all, let us assume that condition (i) holds. Since $1$ is not an eigenvalue of $u$,
we know from Proposition \ref{prop:caracboxedprod} that $(b,u)$ is isometric to the boxed-product $(H_W^1,B \boxtimes C)$
for some pair $(B,C)$ of symplectic forms on a vector space $W$. By Lemmas \ref{lemma:jordannumbersboxedprod} and \ref{lemma:hyperbolicboxedprod},
all the Jordan numbers of $u$ are even and all the Wall invariants of $(b,u)$ are hyperbolic.
Finally, since $u$ is the product of two $U_2$-automorphisms, it is known by the Wang-Wu-Botha theorem (Theorem \ref{theo:Botha}) that
$u$ has no Jordan cell of odd size for the eigenvalue $-1$. Hence, condition (ii) holds.

Next, the Wang-Wu-Botha theorem also shows that condition (iii) implies condition (iv).
And by Proposition \ref{prop:hyperbolicexpansionU2split}, condition (iv) implies condition (i).

Assume finally that condition (ii) holds. Let us prove that condition (iii) holds. Since all the Jordan numbers of $u$ are even, we can find a vector space
$V$ and an automorphism $v$ of $V$ such that $n_{p,r}(v)=\frac{1}{2}\,n_{p,r}(u)$ for all $p \in \Irr(\F)$ and all $r \geq 1$.
Note that $v$ still has no Jordan cell of odd size for the eigenvalue $-1$. Moreover, since $u$ is similar to its inverse we have
$$\forall p \in \Irr(\F), \; \forall r \geq 1, \; n_{p,r}(v^{-1})=n_{p^\sharp,r}(v)=
\frac{1}{2}\,n_{p^\sharp,r}(u)=\frac{1}{2}\,n_{p,r}(u)=n_{p,r}(v).$$
Hence $v$ is similar to its inverse. We claim finally that $(b,u) \simeq H_1(v)$.
To see this, we use the classification of $1$-isopairs. Since
$n_{p,r}(h(v))=n_{p,r}(v)+n_{p^\sharp,r}(v)=n_{p,r}(u)$ for all $p \in \Irr(\F)$ and all $r \geq 1$,
it will suffice to see that $(b,u)$ and $H_1(v)$ have the same Wall invariants.
So, let $p \in \Irr(\F) \setminus \{t\pm 1\}$ be an irreducible palindromial, and let $r \geq 1$.
The Wall invariants $(b,u)_{p,r}$ and $H_1(v)_{p,r}$ are hyperbolic Hermitian forms defined on spaces of the same
dimension over the field $\F[t,t^{-1}]/(p)$, and hence they are equivalent.
Moreover, $1$ is neither an eigenvalue of $u$ nor one of $h(v)$, and
there is no Jordan cell of odd size associated with the eigenvalue $-1$ for $u$ nor for $h(v)$; there is no other nontrivial Wall invariant
to consider. We conclude that $(b,u) \simeq H_1(v)$.

Hence, Theorem \ref{theo:orthogonal1noeigenvalueenriched} is proved.

\section{Orthogonal groups: unipotent automorphisms}\label{section:unipotent}

This section is devoted to the proof of Theorem \ref{theo:orthogonalunipotent}.
In Section \ref{section:unipotent1}, for a unipotent $1$-isopair $(b,u)$ we compute the Witt type of $b$
as a function of the Wall invariants of $(b,u)$.
In Section \ref{section:unipotenttwisted}, we construct a specific $U_2$-splittable unipotent
$1$-isopair $(b,u)$ where $u$ has exactly two Jordan cells, one of size $2k+1$ and one of size $2k-1$, and
we compare its Wall invariants associated with $(t-1,2k+1)$ and $(t-1,2k-1)$.
The main result of Section \ref{section:unipotent1} is then used to give more general constructions of $1$-isopairs
of this type. Then, in Section \ref{section:reconstruction}, these pairs are combined with hyperbolic extensions of unipotent automorphisms
to obtain the implication (ii) $\Rightarrow$ (i) in Theorem \ref{theo:orthogonalunipotent}.

The remainder of the section is devoted to the proof of the implication (i) $\Rightarrow$ (ii) in Theorem \ref{theo:orthogonalunipotent}.
The first part of the proof consists in reducing the study to the case where $(u-\id)^3=0$, using an induction process and well-chosen induced pairs:
this is done in Sections \ref{section:induced} and \ref{section:induction}. Finally, the case where $(u-\id)^3=0$ is dealt with in Section \ref{section:keylemmaproof}.

The reader acquainted with \cite{dSPsquarezeroquadratic} will recognize a similar strategy, with several subtle differences (in particular, in the proof of the case where $(u-\id)^3=0$).

\subsection{On quadratic Wall invariants and the type of $b$}\label{section:unipotent1}

Let $(b,u)$ be a unipotent $1$-isopair. Here, we discuss the relationship between the Witt type of $b$
and the Wall invariants of $(b,u)$.

We start from the case where all the Wall invariants of $(b,u)$ but one vanish.
So, assume that, for some $k \geq 1$,  $u$ has only Jordan cells of size $k$.

Note first that
$$\Ker(u-\id)^i=\bigl(\im ((u-\id)^i)^\star\bigr)^{\bot_b}=\bigl(\im (u^{-1}-\id)^i\bigr)^{\bot_b}=\bigl(\im (u-\id)^i\bigr)^{\bot_b}.$$
\noindent \textbf{Case 1:} $k=2l$ for some $l \geq 1$. Then $\Ker (u-\id)^i=\im (u-\id)^{k-i}$ for all $i \in \lcro 0,k\rcro$ (this is obvious on a Jordan cell of size $k$). In particular $\Ker (u-\id)^l=\im (u-\id)^l=(\Ker (u-\id)^l)^{\bot_b}$, and it follows that $b$ is hyperbolic.

\noindent \textbf{Case 2:} $k=2l+1$ for some $l \geq 0$. This time around, $(\Ker (u-\id)^l)^{\bot_b}=\im (u-\id)^l=\Ker (u-\id)^{l+1}$.
Classically, this yields that $b$ is Witt-equivalent to the non-degenerate bilinear form it induces on the quotient space
$\Ker (u-\id)^{l+1}/\Ker (u-\id)^l$. Set $v:=u+u^{-1}$. The bilinear form $B : (x,y)\in V^2 \mapsto b(x,(v-2\id)^l(y))$ reads
$(x,y) \in V^2 \mapsto (-1)^l b((u-\id)^l(x),(u-\id)^l(y))$.
Hence, the form induced by $B$ on $V/\im (u-\id)=\Ker (u-\id)^{2l+1}/\Ker (u-\id)^{2l}$
is equivalent to the form induced by $(-1)^l b$ on the quotient space $\Ker (u-\id)^{l+1}/\Ker (u-\id)^{l}$ through the isomorphism
$\Ker (u-\id)^{2l+1}/\Ker (u-\id)^{2l} \overset{\simeq}{\longrightarrow} \Ker (u-\id)^{l+1}/\Ker (u-\id)^{l}$ induced by $(u-\id)^l$.
Hence, $b$ is Witt-equivalent to $(-1)^l (b,u)_{t-1,2l+1}$.

\vskip 3mm
Let us come back to the general case of a unipotent $1$-isopair $(b,u)$. The classification of 1-isopairs shows that for every integer $k \geq 1$,
there exists a unipotent $1$-isopair $P^{(k)}$ for which:
\begin{itemize}
\item If $k$ is odd then the Wall invariant $(P^{(k)})_{t-1,i}$ vanishes
for every odd positive integer $i \neq k$, the Wall invariant $(P^{(k)})_{t-1,k}$ is equivalent to $(b,u)_{t-1,k}$, and
$P^{(k)}$ has no Jordan cell of even size;
\item If $k$ is even then $P^{(k)}$ has only Jordan cells of size $k$, and it has as many such cells as $(b,u)$.
\end{itemize}
It follows that $(b,u) \simeq \underset{k \geq 1}{\bot} P^{(k)}$.
Using the previous special cases, we conclude:

\begin{prop}\label{prop:wittequivalencetype}
Let $(b,u)$ be a unipotent $1$-isopair.
Then $b$ is Witt-equivalent to $\underset{l \geq 0}{\bot} (-1)^l (b,u)_{t-1,2l+1}$.
\end{prop}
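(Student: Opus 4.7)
The plan is to assemble the proposition from the two special cases just treated and the orthogonal decomposition $(b,u) \simeq \underset{k \geq 1}{\bot} P^{(k)}$. Since Witt equivalence is compatible with orthogonal sums, and the underlying bilinear form of $(b,u)$ is the orthogonal direct sum of the underlying bilinear forms of the $P^{(k)}$, the Witt class of $b$ equals the sum of the Witt classes of the pieces.

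First, I would handle the even-indexed summands. For every $l \geq 1$, the pair $P^{(2l)}$ has only Jordan cells of size $2l$, so by Case 1 the underlying form of $P^{(2l)}$ is hyperbolic, contributing trivially to the Witt class. These summands can therefore be discarded.

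Next, I would handle the odd-indexed summands. For each $l \geq 0$, the pair $P^{(2l+1)}$ has only Jordan cells of odd size $2l+1$, so by Case 2 the underlying form of $P^{(2l+1)}$ is Witt-equivalent to $(-1)^l (P^{(2l+1)})_{t-1,2l+1}$. But by the way the $P^{(k)}$ were chosen, this quadratic Wall invariant agrees with $(b,u)_{t-1,2l+1}$. Hence the contribution of $P^{(2l+1)}$ to the Witt class of $b$ is precisely $(-1)^l (b,u)_{t-1,2l+1}$.

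Summing the surviving contributions over $l \geq 0$ yields the claimed Witt equivalence. There is no real obstacle here: all of the substantive content — the hyperbolicity in the even case, the Witt reduction through $\Ker(u-\id)^{l+1}/\Ker(u-\id)^l$ in the odd case, and the existence of the decomposition $(b,u) \simeq \underset{k \geq 1}{\bot} P^{(k)}$ via the classification of unipotent $1$-isopairs — has already been established, so the argument is purely a matter of bookkeeping.
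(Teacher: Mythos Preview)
Your proposal is correct and follows essentially the same approach as the paper: decompose $(b,u)$ into the orthogonal sum of the $P^{(k)}$, discard the even-$k$ pieces as hyperbolic by Case~1, and identify the odd-$k$ contributions via Case~2. The paper's own argument is in fact terser than yours, simply saying ``using the previous special cases, we conclude'' after setting up the decomposition.
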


\subsection{The splitting of twisted pairs}\label{section:unipotenttwisted}

\begin{lemma}\label{lemma:twisted}
Let $k$ be a positive integer. Let $V$ be a $4k$-dimensional vector space
together with a hyperbolic bilinear form $b : V^2 \rightarrow \F$.
Then there exists $u \in \Ortho(b)$ such that $(b,u)$ is $U_2$-splittable and
$u$ is unipotent with exactly two Jordan cells, one of size $2k+1$ and one of size $2k-1$.
\end{lemma}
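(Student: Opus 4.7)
The plan is to construct the required pair $(b,u)$ explicitly. I realize $V = W \oplus W^\star$ with $\dim_{\F} W = 2k$ and $b := H_W^1$, which is the (up to isometry) unique hyperbolic symmetric form of dimension $4k$. The task reduces to exhibiting two square-zero $b$-skewselfadjoint endomorphisms $a_1, a_2$ of $V$; then $u_i := \id + a_i$ are automatically $U_2$-isometries of $b$, and one must arrange that $u := u_1 u_2$ is unipotent with Jordan type $(2k+1, 2k-1)$.

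A natural first candidate is $a_1 := v_c$ for a nondegenerate skewsymmetric form $c$ on $W$; by the analysis in Section~\ref{section:crossedproduct}, $a_1$ is $b$-skewselfadjoint and square-zero, so $u_1$ is a $U_2$-isometry. For $a_2$, I would use a block structure adapted to the splitting $V = W \oplus W^\star$: writing $a_2$ in blocks as $\begin{pmatrix} \alpha & \beta \\ \gamma & -\alpha^t \end{pmatrix}$ with $\beta,\gamma$ skewsymmetric (the conditions enforcing $b$-skewselfadjointness) and $a_2^2 = 0$, and tuning these to interact correctly with $a_1$. The product $u - \id = a_1 + a_2 + a_1 a_2$ is then nilpotent, and its Jordan structure is controlled by the ranks of the iterates $(u-\id)^j$; we need precisely $\rk (u-\id)^j = \max(2k+1-j,0) + \max(2k-1-j,0)$ for all $j \geq 0$.

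The main obstacle is producing explicit $a_1, a_2$ for which this rank identity holds for all $j$. A transparent base case is $k = 1$: on $V = \F^4$ with standard hyperbolic form, taking $a_1$ to send $(0,0,x_3,x_4)^t \mapsto (-x_4, x_3, 0, 0)^t$ (vanishing on the first two coordinates) and $a_2$ of analogous upper-triangular shape but with image in a transverse Lagrangian yields $u_1 u_2$ with the right ranks, giving Jordan type $(3,1)$ by direct verification. For general $k$ the construction is more delicate—naive candidates such as $a_2 = A \oplus (-A^t)$ for nilpotent $A \in \End(W)$ force $A^2 = 0$ and hence $(u-\id)^{2k} = 0$, blocking all Jordan cells of size $\geq 2k+1$—so the off-diagonal skew terms $\beta, \gamma$ must be nonzero and must interact with the diagonal nilpotent parts. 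The hardest part of the proof is therefore the simultaneous choice of the four blocks of $a_2$ together with $c$ so that the composite $u_1 u_2$ produces exactly two coupled Jordan chains of lengths $2k+1$ and $2k-1$, followed by the rank verification. This is most naturally carried out by exhibiting explicit matrices in a well-chosen basis and computing $(u-\id)^j$ inductively on $j$.
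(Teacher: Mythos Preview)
Your proposal has the right shape but is, by your own admission, only a plan: the explicit construction of $a_1,a_2$ for general $k$ and the rank verification are deferred as ``the hardest part,'' so the proof is not yet complete. There is also a minor inconsistency: you propose $a_1=v_c$, which maps $W$ into $W^\star$, but in your $k=1$ sketch your $a_1$ maps $W^\star$ into $W$.

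The paper does carry out exactly the kind of explicit matrix construction you anticipate, but with different and cleaner choices. Working in a hyperbolic basis $(e_1,\dots,e_{2k},f_1,\dots,f_{2k})$, it takes $a_1$ \emph{block-diagonal} of the form $A_1\oplus(-A_1^T)$, where $A_1$ is the ``half-shift'' $e_{2i}\mapsto e_{2i-1}$, and $a_2$ block-upper-triangular $\begin{pmatrix} A_2 & C\\ 0 & -A_2^T\end{pmatrix}$ with $A_2$ the complementary half-shift $e_{2i+1}\mapsto e_{2i}$ and $C$ a rank-$2$ skewsymmetric block linking $f_1\mapsto e_{2k}$, $f_{2k}\mapsto -e_1$. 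So neither $a_i$ is of the form $v_c$.

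The payoff of these choices is that the Jordan structure is not read off from $(u-\id)^j$ directly but from $v:=u+u^{-1}-2\id=a_1a_2+a_2a_1$, which satisfies $v^l=u^{-l}(u-\id)^{2l}$ and hence $\rk(u-\id)^{2l}=\rk v^l$. With the half-shifts chosen as above, $v$ acts on the $e_i$ and (almost) on the $f_i$ as a shift by $2$, so the ranks of $v^l$ and of $(u-\id)v^{l}$ are immediately computable. This reduces the entire verification to a handful of explicit evaluations rather than an induction on~$j$. If you pursue your $a_1=v_c$ route, the analogous simplification via $v$ is the thing to look for; otherwise the rank computation you describe will be considerably messier.
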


\begin{proof}
We choose a $b$-hyperbolic basis $\bfB=(e_1,\dots,e_{2k},f_1,\dots,f_{2k})$ of $V$.
The matrix of $b$ in that basis is
$S:=\begin{bmatrix}
0 & I_{2k} \\
I_{2k} & 0
\end{bmatrix}$. We convene that $e_l=0$ for every integer $l \leq 0$.

Next, we define $a_1 \in \End(V)$ by $a_1(e_{2i})=e_{2i-1}$ and
$a_1(f_{2i-1})=- f_{2i}$ for all $i \in \lcro 1,k\rcro$, and $a_1$ maps all the other vectors of $\bfB$ to $0$.
We define $a_2 \in \End(V)$ by $a_2(e_{2i+1})=e_{2i}$ and
$a_2(f_{2i})=- f_{2i+1}$ for all $i \in \lcro 1,k-1\rcro$,
$a_2(f_1)=e_{2k}$ and $a_2(f_{2k})=- e_1$, and $a_2$ maps all the other vectors of $\bfB$ to $0$.
It is easily checked on the vectors of $\bfB$ that $a_1^2=0$ and $a_2^2=0$.

The respective matrices $M_1$ and $M_2$ of $a_1$ and $a_2$ in $\bfB$ are of the form
$$M_1=\begin{bmatrix}
A_1 & 0 \\
0 & - A_1^T
\end{bmatrix} \quad \text{and} \quad
\begin{bmatrix}
A_2 & C \\
0 & - A_2^T
\end{bmatrix}$$
where $C^T=-C$.
One checks that both $SM_1$ and $SM_2$ are skewsymmetric. Hence $a_1$ and $a_2$ are $b$-skewselfadjoint, and we conclude that
$\id_V+a_1$ and $\id_V+a_2$ are $U_2$-elements in $\Ortho(b)$.

Let us show that $u:=(\id_V+a_1)(\id_V+a_2)=\id_V+a_1+a_2+a_1a_2$ has the required property.
It will suffice to see that $u$ is unipotent with exactly two Jordan cells, one of size $2k+1$ and one of size $2k-1$.
Let us set
$$v:=u+u^{-1}-2\id=a_1a_2+a_2a_1.$$
For all $l \geq 0$, note that $v^l= u^{-l} (u-\id)^{2l}$ and hence
$\rk (u-\id)^{2l}=\rk v^l$ and $\rk (u-\id)^{2l+1}=\rk((u-\id) v^l)$.

Next, we have $v(e_i)=e_{i-2}$ for all $i \in \lcro 1,2k\rcro$, and
$v(f_1)=f_3+e_{2k-1}$ and $v(f_i)=f_{i+2}$ for all $i \in \lcro 2,2k-2\rcro$, $v(f_{2k-1})=e_1$ and $v(f_{2k})=0$.
It easily follows that $v^k$ vanishes at every vector of $\mathbf{B}$ with the exception of
$f_1$, for which $v^k(f_1)=2\,e_1$. And then $v^{k+1}=0$, and better still $(u-\id)\,v^k=0$. As $v=u^{-1}(u-\id)^2$, it follows that
$u$ is unipotent, $\rk((u-\id)^{2k})=\rk v^k=1$ and $(u-\id)^{2k+1}=0$.

Now, remember that, for all $i \geq 0$, the difference $\rk((u-\id)^i)-\rk((u-\id)^{i+1})$ is the number of Jordan cells of
$u$ of size greater than $i$ (for the eigenvalue $1$).
In particular, we find that $u$ has exactly one Jordan cell of size at least $2k+1$ for $1$, and its size equals $2k+1$.

Finally, one computes on the basis $\bfB$ that $\im v^{k-1}=\Vect(e_1,e_2,e_3+f_{2k-1},f_{2k})$,
from which we find that $(u-\id)(\im v^{k-1})=\Vect(e_1,-f_{2k}+e_2+e_1)$ and hence $\rk (u-\id)^{2k-1}=2$. Therefore
$\rk(u-\id)^{2k-2}-\rk (u-\id)^{2k-1}=2$. Hence, $u$ has exactly two Jordan cells of size at least $2k-1$.
Since $u$ is unipotent with one Jordan cell of size $2k+1$, and since $\dim V=(2k+1)+(2k-1)$, we conclude that $u$ has exactly two Jordan cells, one of size $2k+1$
and one of size $2k-1$.
\end{proof}

\begin{cor}\label{cor:twistedcor}
Let $(b,u)$ be a unipotent $1$-isopair with exactly two Jordan cells, of respective sizes $2k+1$ and
$2k-1$. Assume that $(b,u)_{t-1,2k+1} \simeq (b,u)_{t-1,2k-1}$. Then
$(b,u)$ is $U_2$-splittable.
\end{cor}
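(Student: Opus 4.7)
The plan is to match $(b,u)$ with a concrete $U_2$-splittable model obtained from Lemma \ref{lemma:twisted}, after a suitable rescaling of the form. Since $u$ has exactly one Jordan cell of each of the sizes $2k+1$ and $2k-1$, the vector spaces carrying the Wall invariants $(b,u)_{t-1,2k+1}$ and $(b,u)_{t-1,2k-1}$ are both one-dimensional over $\F$, so by hypothesis they are each equivalent to a common form $\langle \alpha \rangle$ for some $\alpha \in \F^*$. Proposition \ref{prop:wittequivalencetype} then shows that $b$ is Witt-equivalent to $(-1)^{k} \langle \alpha \rangle \bot (-1)^{k-1} \langle \alpha \rangle$, whose discriminant is $-\alpha^2 \equiv -1$ modulo squares; this two-dimensional form is therefore hyperbolic, and since $\dim V = 4k$ is even, $b$ itself is hyperbolic.

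Next, I would invoke Lemma \ref{lemma:twisted} to produce a $U_2$-splittable $1$-isopair $(b_0,u_0)$ on a $4k$-dimensional hyperbolic space, with $u_0$ unipotent having exactly two Jordan cells, of sizes $2k+1$ and $2k-1$. Applying Proposition \ref{prop:wittequivalencetype} in the reverse direction to $(b_0,u_0)$: the Witt-triviality of the hyperbolic form $b_0$ forces the two-dimensional sum $(-1)^{k}(b_0,u_0)_{t-1,2k+1} \bot (-1)^{k-1}(b_0,u_0)_{t-1,2k-1}$ to be hyperbolic, and comparing discriminants of its one-dimensional summands yields $(b_0,u_0)_{t-1,2k+1} \simeq (b_0,u_0)_{t-1,2k-1} \simeq \langle \beta \rangle$ for some $\beta \in \F^*$.

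The final step is a rescaling. Set $\lambda := \alpha \beta^{-1} \in \F^*$ and consider the pair $(\lambda b_0, u_0)$. The isometry group is unchanged under a global rescaling, so $(\lambda b_0, u_0)$ remains $U_2$-splittable; the form $\lambda b_0$ remains hyperbolic; and directly from the definition in Section \ref{section:Wallinvariants}, each quadratic Wall invariant scales by $\lambda$, giving $(\lambda b_0, u_0)_{t-1,r} \simeq \langle \lambda \beta \rangle = \langle \alpha \rangle$ for both $r \in \{2k+1, 2k-1\}$. Hence $(b,u)$ and $(\lambda b_0, u_0)$ share the same Jordan numbers and the same quadratic Wall invariants, and possess no other nontrivial invariants. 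Wall's theorem for orthogonal groups then gives $(b,u) \simeq (\lambda b_0, u_0)$, and $U_2$-splittability transfers across this isometry, proving the corollary. There is no deep obstacle once Lemma \ref{lemma:twisted} and Proposition \ref{prop:wittequivalencetype} are in hand; the one point that needs attention is the linear scaling of the quadratic Wall invariants under $b \leadsto \lambda b$, which makes the final step of matching invariants work on the nose.
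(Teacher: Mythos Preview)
Your argument is correct and follows essentially the same approach as the paper: invoke Lemma~\ref{lemma:twisted} to get a model $(b_0,u_0)$, use Proposition~\ref{prop:wittequivalencetype} to deduce that its two one-dimensional Wall invariants coincide, rescale the form by $\alpha\beta^{-1}$ to match invariants, and conclude by Wall's classification. The only difference is cosmetic: you take an extra detour to show that $b$ is hyperbolic (which is true but not needed, since Wall's theorem already forces the underlying forms to agree once the Jordan numbers and Wall invariants match), and you phrase the matching of the two Wall invariants of $(b_0,u_0)$ via discriminants rather than directly from $\langle\gamma\rangle\bot\langle-\gamma\rangle$ being hyperbolic.
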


\begin{proof}
By the previous lemma, there is a $U_2$-splittable unipotent $1$-isopair $(b',u')$ with exactly two Jordan cells, of respective sizes $2k+1$ and $2k-1$, and with $b'$ hyperbolic.
By Proposition \ref{prop:wittequivalencetype}, $b'$ is Witt-equivalent to $(-1)^k (b',u')_{t-1,2k+1} \bot (-1)^{k-1}  (b',u')_{t-1,2k-1}$,
and it follows that $(-1)^k (b',u')_{t-1,2k+1} \simeq -(-1)^{k-1}  (b',u')_{t-1,2k-1}$,
that is $(b',u')_{t-1,2k+1} \simeq (b',u')_{t-1,2k-1}$.
Now, choose a nonzero value $\alpha$ (respectively, $\beta$) represented by the Wall invariant $(b,u)_{t-1,2k-1}$ (respectively, by $(b',u')_{t-1,2k-1}$).
Then, for $b'':=\alpha \beta^{-1}b'$, we see that $(b'',u')$ is a $1$-isopair, with sole non-trivial Wall invariants $(b'',u')_{t-1,2k-1}= \alpha \beta^{-1} (b',u')_{t-1,2k-1}$ and
$(b'',u')_{t-1,2k+1}= \alpha \beta^{-1} (b',u')_{t-1,2k+1}$. Both represent the scalar $\alpha$.
It follows from our assumptions that $(b'',u')_{t-1,2k-1} \simeq (b,u)_{t-1,2k-1}$ and $(b'',u')_{t-1,2k+1} \simeq (b,u)_{t-1,2k+1}$,
and we deduce from the classification of $1$-isopairs that $(b'',u')$ is isometric to $(b,u)$.
As $\Ortho(b')=\Ortho(b'')$, we find that $u'$ is the product of two $U_2$-elements of $\Ortho(b'')$, and hence $u$ is the product of two $U_2$-elements of
$\Ortho(b)$.
\end{proof}

\begin{cor}\label{cor:twistedpairs}
Let $(b,u)$ be a unipotent $1$-isopair with only Jordan cells of sizes $2k+1$ and
$2k-1$. Assume that $(b,u)_{t-1,2k+1} \simeq (b,u)_{t-1,2k-1}$. Then
$(b,u)$ is $U_2$-splittable.
\end{cor}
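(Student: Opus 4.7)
The plan is to reduce the statement to Corollary \ref{cor:twistedcor} by splitting $(b,u)$ as an orthogonal sum of pieces, each having exactly one Jordan cell of size $2k+1$ and one of size $2k-1$ with matching one-dimensional Wall invariants. Throughout we of course assume $k \geq 1$, otherwise $2k-1$ is not a legitimate Jordan-cell size.

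First I would observe that since $(b,u)_{t-1,2k+1}$ and $(b,u)_{t-1,2k-1}$ are equivalent, they have the same dimension. But the dimension of $(b,u)_{t-1,2l+1}$ equals the number of Jordan cells of $u$ of size $2l+1$, so $u$ has the same number $n$ of cells of size $2k+1$ as it has of size $2k-1$. Next, I would diagonalize one of the Wall invariants: write $(b,u)_{t-1,2k-1} \simeq \langle \alpha_1,\dots,\alpha_n\rangle$ for some $\alpha_i \in \F^*$. By the equivalence assumption we also have $(b,u)_{t-1,2k+1} \simeq \langle \alpha_1,\dots,\alpha_n\rangle$.

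Then for each $i \in \lcro 1,n\rcro$, the classification of $1$-isopairs (Wall's theorem, combined with Theorem \ref{theo:indecomposable}) guarantees the existence of a unipotent $1$-isopair $(b_i,u_i)$ in which $u_i$ has exactly two Jordan cells (of sizes $2k+1$ and $2k-1$) and whose sole nontrivial Wall invariants are $(b_i,u_i)_{t-1,2k+1} \simeq \langle \alpha_i \rangle$ and $(b_i,u_i)_{t-1,2k-1} \simeq \langle \alpha_i \rangle$. (Concretely, take the orthogonal sum of two cyclic indecomposable pairs, one with minimal polynomial $(t-1)^{2k+1}$ and the other with minimal polynomial $(t-1)^{2k-1}$, each carrying Wall invariant $\langle \alpha_i \rangle$.) Since these two Wall invariants are equivalent, Corollary \ref{cor:twistedcor} applies and yields that $(b_i,u_i)$ is $U_2$-splittable.

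Finally, I would form $(b',u') := \underset{i=1}{\overset{n}{\bot}}\,(b_i,u_i)$. Orthogonal sums of $U_2$-splittable isopairs are $U_2$-splittable (this is already noted in the paper), so $(b',u')$ is $U_2$-splittable. Because Wall invariants are additive under orthogonal sums, we get
\[
(b',u')_{t-1,2k\pm 1} \simeq \underset{i=1}{\overset{n}{\bot}} \langle \alpha_i \rangle \simeq (b,u)_{t-1,2k\pm 1},
\]
while $u'$ and $u$ have the same primary invariants (each being the union of $n$ copies of $(t-1)^{2k+1}$ and $n$ copies of $(t-1)^{2k-1}$), and all other Wall invariants vanish on both sides. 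Wall's classification theorem for $1$-isopairs therefore gives $(b',u') \simeq (b,u)$, so $(b,u)$ is $U_2$-splittable. There is no real obstacle: the only verification of substance is the matching of invariants between $(b',u')$ and $(b,u)$, which reduces to the initial diagonalization step.
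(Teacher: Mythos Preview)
Your proof is correct and follows essentially the same approach as the paper: diagonalize the Wall invariants, build one two-cell summand per diagonal entry via Corollary \ref{cor:twistedcor}, and match invariants to conclude via Wall's classification. Your write-up is slightly more explicit about why the number of cells of each size must coincide and about the final invariant check, but the argument is the same.
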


\begin{proof}
Let us take an orthogonal basis $(e_1,\dots,e_n)$ for the Wall bilinear form $(b,u)_{t-1,2k+1}$. Denote by
$\alpha_1,\dots,\alpha_n$ the values
 of the associated quadratic form at the vectors $e_1,\dots,e_n$.
By assumption, we can find an orthogonal basis $(f_1,\dots,f_n)$ for the form $(b,u)_{t-1,2k-1}$
such that the values of the associated quadratic form at $f_1,\dots,f_n$ are $\alpha_1,\dots,\alpha_n$.
By the classification of $1$-isopairs, we can find, for all $i \in \lcro 1,n\rcro$,
a unipotent $1$-isopair $(b_i,u_i)$ with exactly two Jordan cells, one of size $2k+1$ and one of size $2k-1$,
and both invariants $(b_i,u_i)_{t-1,2k+1}$ and $(b_i,u_i)_{t-1,2k-1}$ represent $\alpha_i$.
Then $(b,u) \simeq (b_1,u_1) \bot \cdots \bot (b_n,u_n)$.
By Corollary \ref{cor:twistedcor}, each pair $(b_i,u_i)$ is $U_2$-splittable, and we conclude
that so is $(b,u)$.
\end{proof}

\subsection{Reconstructing $U_2$-splittable pairs}\label{section:reconstruction}

We now have all the tools to prove the implication (i) $\Rightarrow$ (ii) in Theorem \ref{theo:orthogonal}.

We start with a simple case:

\begin{prop}\label{prop:Jordanevensize}
Let $(b,u)$ be a unipotent $1$-isopair whose Wall invariants are all hyperbolic.
Then $(b,u)$ is $U_2$-splittable.
\end{prop}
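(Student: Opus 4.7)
The plan is to reduce the proposition to the Wang-Wu-Botha theorem (Theorem \ref{theo:Botha}) via the hyperbolic extension construction of Section \ref{expansionsection}. More precisely, I will exhibit a unipotent automorphism $v$ of some vector space $W$ such that $(b,u) \simeq H_1(v)$; once this is done, Theorem \ref{theo:Botha} gives that $v$ is the product of two $U_2$-automorphisms of $W$ (the two conditions of the theorem being trivially satisfied because $v$ is unipotent), and Proposition \ref{prop:hyperbolicexpansionU2split} then immediately yields that $H_1(v)$, and hence $(b,u)$, is $U_2$-splittable.

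First I would show that every Jordan number $n_{t-1,r}(u)$ is even. For odd $r=2k+1$, this integer equals the $\F$-dimension of the space on which the quadratic Wall invariant $(b,u)_{t-1,r}$ is defined; since that form is hyperbolic by hypothesis, its dimension is even. For even $r$, the classification of indecomposable unipotent $1$-isopairs (Theorem \ref{theo:indecomposable}) asserts that each indecomposable summand contributing a Jordan cell of even size $r$ is of the fourth type listed there, with exactly two Jordan cells of size $r$; therefore Jordan cells of even size automatically come in even numbers.

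With this in hand, I define a unipotent automorphism $v$ of a vector space $W$ by prescribing $n_{t-1,r}(v)=\tfrac{1}{2}\,n_{t-1,r}(u)$ for every $r\geq 1$. Since $(t-1)^\sharp=t-1$, the formula $n_{p,k}(h(v))=n_{p,k}(v)+n_{p^\sharp,k}(v)$ recalled in Section \ref{expansionsection} shows that $h(v)$ has exactly the same primary invariants as $u$. To prove $(b,u)\simeq H_1(v)$ it now suffices, by Wall's theorem for orthogonal groups applied in the unipotent setting, to check that the quadratic Wall invariants of $(b,u)$ and $H_1(v)$ coincide at every pair $(t-1,2k+1)$: those of $H_1(v)$ are all hyperbolic by Proposition \ref{prop:Wallinvariantshyperbolic}, those of $(b,u)$ are hyperbolic by assumption, and both are non-degenerate symmetric bilinear forms on $\F$-vector spaces of the same dimension (namely $n_{t-1,2k+1}(u)$), so they are equivalent. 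There are no Hermitian Wall invariants to match since $u$ is unipotent, and there are no quadratic Wall invariants at $t+1$ to match for the same reason.

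Finally, $v$ is unipotent, hence similar to $v^{-1}$ and vacuously free of Jordan cells of odd size for the eigenvalue $-1$; so Theorem \ref{theo:Botha} yields a factorisation $v=v_1 v_2$ with $v_1,v_2$ two $U_2$-automorphisms of $W$, and Proposition \ref{prop:hyperbolicexpansionU2split} completes the argument. I do not anticipate a serious obstacle: the only delicate point is the parity of the Jordan numbers at odd sizes, which is immediate from the fact that a hyperbolic symmetric form has even dimension together with the description of $V_{t-1,r}$.
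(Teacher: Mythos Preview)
Your proposal is correct and follows essentially the same route as the paper: establish that all Jordan numbers $n_{t-1,r}(u)$ are even (odd $r$ from hyperbolicity, even $r$ from the structure of indecomposable $1$-isopairs), halve them to build a unipotent $v$, identify $(b,u)\simeq H_1(v)$ via Wall's theorem and Proposition~\ref{prop:Wallinvariantshyperbolic}, and conclude by Theorem~\ref{theo:Botha} and Proposition~\ref{prop:hyperbolicexpansionU2split}. Your write-up is in fact slightly more explicit than the paper's in justifying the parity claims and the matching of invariants.
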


\begin{proof}
By the classification of $1$-isopairs, the Jordan numbers of the form $n_{t-1,2k}(u)$ are all even,
and by the assumption of hyperbolicity, all the Jordan numbers of the form $n_{t-1,2k+1}(u)$ are even.
Hence we can choose a unipotent automorphism $v$ of a vector space $V$ such that $n_{t-1,k}(v)=\frac{1}{2}n_{t-1,k}(u)$ for all $k \geq 1$. Then $h(v)$ is unipotent with the same Jordan numbers as $u$. We know from Proposition \ref{prop:Wallinvariantshyperbolic} that all the Wall invariants of $H_1(v)$ are hyperbolic. Since hyperbolic forms are equivalent if and only if they have the same rank, we deduce from the classification of $1$-isopairs that $(b,u) \simeq H_1(v)$.

Besides, $v$ is unipotent and hence it is the product of two $U_2$-elements of $\GL(V)$ (see Theorem \ref{theo:Botha}). Hence $h(v)$ is the product of two $U_2$-elements of $\Ortho(H_V^1)$,
by Proposition \ref{prop:hyperbolicexpansionU2split}. We conclude that $(b,u)$ is $U_2$-splittable.
\end{proof}

\begin{prop}\label{prop:Jordanoddsize}
Let $(b,u)$ be a $1$-isopair, where $u$ is nilpotent with only Jordan cells of odd size,
and $(-1)^k (b,u)_{t-1,2k+1}$ Witt-simplifies $\underset{i > k}{\bot}(-1)^i (b,u)_{t-1,2i+1}$ for every integer $k \geq 0$.
Then $(b,u)$ is $U_2$-splittable.
\end{prop}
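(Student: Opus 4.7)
The plan is to proceed by induction on the largest integer $N \geq 0$ for which $B_N := (b,u)_{t-1,2N+1}$ is not hyperbolic, with the convention $N = -1$ when no such integer exists. In the base case $N = -1$, every Wall invariant $B_k$ is hyperbolic, and Proposition \ref{prop:Jordanevensize} immediately yields that $(b,u)$ is $U_2$-splittable.

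For the inductive step, fix such an $N \geq 0$. Decompose $B_N \simeq B_N^{\mathrm{an}} \bot H_N$ with $H_N$ hyperbolic and $B_N^{\mathrm{an}} \simeq \underset{1 \leq i \leq s}{\bot}\langle \alpha_i \rangle$ for scalars $\alpha_i \in \F^*$. Since $B_i$ is hyperbolic for every $i > N$, the form $\underset{i \geq N}{\bot}(-1)^i B_i$ is Witt-equivalent to $(-1)^N B_N^{\mathrm{an}}$. The Witt-simplification hypothesis at $k = N-1$ therefore forces every nonisotropic part of $(-1)^N B_N^{\mathrm{an}}$ to be equivalent to a subform of $-(-1)^{N-1}B_{N-1} = (-1)^N B_{N-1}$; equivalently, $B_N^{\mathrm{an}}$ is a subform of $B_{N-1}$. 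By Witt cancellation, I may write $B_{N-1} \simeq \bigl(\underset{1 \leq i \leq s}{\bot}\langle \alpha_i \rangle\bigr) \bot B_{N-1}'$ for some symmetric bilinear form $B_{N-1}'$.

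Using Wall's classification theorem for $1$-isopairs together with the description of indecomposables in Theorem \ref{theo:indecomposable}, I then realize an isometry $(b,u) \simeq P \bot Q$, where $P = \underset{1 \leq i \leq s}{\bot}(b_i,u_i)$ is an orthogonal sum of $s$ unipotent $1$-isopairs, each $(b_i,u_i)$ having exactly two Jordan cells of respective sizes $2N+1$ and $2N-1$ and with both quadratic Wall invariants equal to $\langle \alpha_i \rangle$, and where $Q$ is the $1$-isopair whose Wall invariants are $B_k^Q \simeq B_k$ for $k \notin \{N-1,N\}$, $B_{N-1}^Q \simeq B_{N-1}'$, and $B_N^Q \simeq H_N$. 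By Corollary \ref{cor:twistedpairs} each $(b_i,u_i)$ is $U_2$-splittable, so $P$ is $U_2$-splittable. To apply the inductive hypothesis to $Q$, I verify that $Q$ inherits the Witt-simplification assumption: for every $k \geq 0$, the form $\underset{i > k}{\bot}(-1)^i B_i^Q$ differs from $\underset{i > k}{\bot}(-1)^i B_i$ exactly by the hyperbolic summand $\underset{1 \leq j \leq s}{\bot}\bigl(\langle (-1)^{N-1}\alpha_j\rangle \bot \langle (-1)^N \alpha_j\rangle\bigr)$, so the two sums lie in the same Witt class and the Witt-simplification condition transfers. Since $B_N^Q$ is hyperbolic, the induction parameter for $Q$ is strictly smaller than $N$, and the inductive hypothesis yields that $Q$ is $U_2$-splittable. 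Combining, $(b,u) \simeq P \bot Q$ is $U_2$-splittable.

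The main obstacle is the combinatorial bookkeeping inside Wall's classification needed to build the decomposition $(b,u) \simeq P \bot Q$: one must verify that $(b,u)$ has enough Jordan cells of size $2N-1$ to be consumed by the twisted pairs (ensured by $\rk B_{N-1} \geq s$, itself a consequence of the embedding $B_N^{\mathrm{an}} \hookrightarrow B_{N-1}$) and that the Wall invariants of $P \bot Q$ reproduce those of $(b,u)$ exactly. Once this is in place, the inheritance of the Witt-simplification condition by $Q$ reduces to the elementary identity that $\langle(-1)^{N-1}\alpha\rangle \bot \langle(-1)^N\alpha\rangle$ is hyperbolic for every $\alpha \in \F^*$.
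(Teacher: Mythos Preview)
Your approach is essentially the same as the paper's: both peel off twisted pairs (via Corollary~\ref{cor:twistedpairs}) from the top of the Jordan structure and check that the Witt-simplification condition is inherited by the remainder. The paper inducts on the dimension and splits into two cases according to whether $B_\ell$ is isotropic (peel off one hyperbolic plane) or anisotropic (peel off all of $B_\ell$ as twisted pairs); you combine these by peeling off the entire anisotropic part $B_N^{\mathrm{an}}$ in one stroke and inducting on $N$ instead.

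There is, however, a genuine gap at $N=0$. Your inductive step invokes the Witt-simplification hypothesis at $k=N-1$, but for $N=0$ this would mean $k=-1$, which is outside the range $k\geq 0$ guaranteed by the assumption. Worse, you build twisted pairs with Jordan cells of sizes $2N+1$ and $2N-1$, and for $N=0$ the latter size is $-1$, which is meaningless. The case $N=0$ does occur (take $u=\id$ on any space where $b$ is not hyperbolic) and requires a separate, easy argument: when every $B_i$ with $i\geq 1$ is hyperbolic, split $(b,u)$ into the part with only cells of size $1$ (on which $u=\id$, trivially the product $\id\cdot\id$) and the part with only odd cells of size $\geq 3$ (all Wall invariants hyperbolic, handled by Proposition~\ref{prop:Jordanevensize}). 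Once you add this, your argument is complete.

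A minor point: your claim that $\underset{i>k}{\bot}(-1)^i B_i^Q$ and $\underset{i>k}{\bot}(-1)^i B_i$ differ by that hyperbolic summand ``for every $k\geq 0$'' is literally false for $k\geq N-1$ (for $k=N-1$ they differ by $(-1)^N B_N^{\mathrm{an}}$, and for $k\geq N$ they are equal). This does not affect the conclusion, since for $k\geq N-1$ the $Q$-side is hyperbolic and the condition is vacuous, but you should say so explicitly rather than fold it into the general claim.
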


\begin{proof}
We shall say that a $1$-isopair $(b',u')$ satisfies condition $(\calC)$ whenever
$(-1)^k (b,u)_{t-1,2k+1}$ Witt-simplifies $\underset{i > k}{\bot} (-1)^i\,(b,u)_{t-1,2i+1}$ for every integer $k \geq 0$.

We prove the result by induction on the dimension of the underlying space. The result is obvious for $0$-dimensional spaces, and more generally
when $u=\id$ (in that case it suffices to take the factors equal to $\id$).

So now we assume that $u \neq \id$. Because all the Jordan cells of $u$ have odd size, the nilindex of $u-\id$ reads $2\ell+1$ for some $\ell \geq 1$.
Write $B_i:=(b,u)_{t-1,2i+1}$ for all $i \geq 0$  and note that $B_{\ell}$ is non-zero, whereas $B_k=0$ for all $k>\ell$.

\noindent \textbf{Case 1: $B_\ell$ is isotropic.} \\
Then we can split $B_\ell=H \bot B'$ where $H$ is a hyperbolic form of rank $2$ and $B'$ is a non-degenerate symmetric bilinear form. Using the classification of $1$-isopairs, this helps us split $(b,u)=(b_1,u_1) \bot (b_2,u_2)$
in which:
\begin{itemize}
\item $(b_1,u_1)$ is a $1$-isopair with exactly two Jordan cells, both of size $2\ell+1$, and
$(b_1,u_1)_{t-1,2\ell+1}$ is hyperbolic;
\item $(b_2,u_2)$ is a $1$-isopair with only Jordan cells of odd size,
$(b_2,u_2)_{t-1,2\ell+1} \simeq B'$ and $(b_2,u_2)_{t-1,2k+1} \simeq  (b,u)_{t-1,2k+1}$ for all $k \neq \ell$.
\end{itemize}

By Proposition \ref{prop:Jordanevensize}, $(b_1,u_1)$ is $U_2$-splittable.

Next, we show that $(b_2,u_2)$ satisfies condition $(\calC)$: for all $k\neq \ell$,
the forms $\underset{i>k}{\bot} (-1)^i (b_2,u_2)_{t-1,2i+1}$ and $\underset{i>k}{\bot} (-1)^i B_i$
have the same non-isotropic parts up to equivalence, whereas $(-1)^k (b_2,u_2)_{t-1,2k+1} \simeq (-1)^k B_k$;
moreover $\underset{i>\ell}{\bot} (-1)^i (b_2,u_2)_{t-1,2i+1}$ equals zero, and hence every non-isotropic part of it is equivalent to a subform of
$(-1)^\ell (b_2,u_2)_{t-1,2\ell+1}$. Using the fact that $(b,u)$  satisfies condition $(\calC)$, we deduce that so does
$(b_2,u_2)$.

Hence, by induction $(b_2,u_2)$ is $U_2$-splittable, and we conclude that so is $(b,u)$.

\vskip 3mm
\noindent \textbf{Case 2: $B_\ell$ is nonisotropic.} \\
By condition $(\calC)$, we can split $B_{\ell-1} \simeq B_\ell \bot \varphi$ for some non-degenerate symmetric bilinear form $\varphi$.
This helps us split $(b,u)=(b_1,u_1) \bot (b_2,u_2)$
in which:
\begin{itemize}
\item $(b_1,u_1)$ is a $1$-isopair with only Jordan cells of size $2\ell-1$ and $2\ell+1$
$(b_1,u_1)_{t-1,2\ell+1} \simeq B_\ell$ and $(b_1,u_1)_{t-1,2\ell-1} \simeq B_\ell$;
\item $(b_2,u_2)$ is a $1$-isopair with only Jordan cells of odd size at most $2\ell-1$,
$(b_2,u_2)_{t-1,2\ell-1} \simeq \varphi$ and
$(b_2,u_2)_{t-1,2k+1} \simeq B_k$ for every $k \in \lcro 0,\ell-2\rcro$.
\end{itemize}
By Corollary \ref{cor:twistedcor}, $(b_1,u_1)$ is $U_2$-splittable.

To check that $(b_2,u_2)$ satisfies condition $(\calC)$, we note first that, for all
$k \leq \ell-2$, we have
$\underset{i >k}{\bot}(-1)^i B_i \simeq \bigl((-(-1)^\ell B_\ell) \bot ((-1)^\ell B_\ell)\bigr)
\bot \underset{i >k}{\bot}(-1)^i (b_2,u_2)_{t-1,2i+1}$, and since the form
$(-(-1)^\ell B_\ell) \bot ((-1)^\ell B_\ell)$ is hyperbolic this shows that the nonisotropic parts of
$\underset{i >k}{\bot}(-1)^i B_i$ are equivalent to those of $\underset{i >k}{\bot}(-1)^i (b_2,u_2)_{t-1,2i+1}$.
This is enough to check condition $(\calC)$ for $(b_2,u_2)$ at each integer $k \leq \ell-2$.
Besides, for $k\geq \ell-1$ we have $\underset{i >k}{\bot}(-1)^i (b_2,u_2)_{t-1,2i+1}=\{0\}$ and hence condition
$(\calC)$ is trivially satisfied at $k$ for $(b_2,u_2)$. We conclude that $(b_2,u_2)$ satisfies condition $(\calC)$.
Hence, by induction $(b_2,u_2)$ is $U_2$-splittable, and we conclude that so is $(b,u)$.

Hence, our inductive proof is complete.
\end{proof}

Now, we complete the proof of (i) $\Rightarrow$ (ii) in Theorem \ref{theo:orthogonalunipotent}.
So, let $(b,u)$ be a unipotent $1$-isopair in which, for all
$k \geq 1$, the symmetric bilinear form $(-1)^k\,(b,u)_{t-1,2k+1}$ Witt-simplifies $\underset{i>k}{\bot} (-1)^i\, (b,u)_{t-1,2i+1}$.
Using the classification of indecomposable $1$-isopairs, we can split
$(b,u) \simeq (b_1,u_1) \bot (b_2,u_2)$ in which
$(b_1,u_1)$ (respectively, $(b_2,u_2)$) is a unipotent $1$-isopair with all Jordan cells of odd (respectively, of even) size.

By Proposition \ref{prop:Jordanevensize}, the pair $(b_2,u_2)$ is $U_2$-splittable.
Next, we see that $(b_1,u_1)_{t-1,2k+1} \simeq (b,u)_{t-1,2k+1}$ for every integer $k \geq 0$, and it follows that
$(b_1,u_1)$ satisfies the assumptions of Proposition \ref{prop:Jordanoddsize}.
Hence, $(b_1,u_1)$ is $U_2$-splittable, and we conclude that so is $(b,u)$.

Thus, it is now proved that in Theorem \ref{theo:orthogonalunipotent} condition (i) implies condition (ii).
The proof of the converse implication is spread over the next (and last) three sections.

\subsection{Three reduction techniques to lower the nilindex}\label{section:induced}

Here, we shall examine three techniques to reduce the nilindex in the analysis of a unipotent $1$-isopair $(b,u)$.

\subsubsection{Descent}\label{section:descent}

Let $(b,u)$ be a unipotent $1$-isopair in which the nilindex of $u-\id$ equals $\nu \geq 3$.
We denote the underlying vector space by $V$.

Here, we compute the invariants of the induced pair $(b,u)^{\im (u-\id)}$, which we will locally denote by $(\overline{b},\overline{u})$.
Noting that $(\im (u-\id))^{\bot_b}=\Ker (u-\id)$, we see that the underlying vector space of the pair $(b,u)^{\im (u-\id)}$ is
$W:=\im (u-\id)/(\Ker (u-\id) \cap \im (u-\id))$.

Obviously $\overline{u}^{\nu-1}=0$. Let us look at the Wall invariants $(b,u)_{t-1,r}$.
To this end, we start by examining the case where all the Jordan cells of $u$ have the same size.

So, assume that, for some integer $r \geq 1$, all the Jordan cells of $u$ have size $r$.
If $r=1$ then $\im (u-\id)=\{0\}$ and $(\overline{b},\overline{u})$ is the zero pair.
If $r=2$ then $\im (u-\id)=\Ker (u-\id)$ and again $(\overline{b},\overline{u})$ is the zero pair.

Assume now that $r>2$. If $r$ is even, all the Wall invariants of $(\overline{b},\overline{u})$ equal zero.
Assume now that $r$ is odd.
Then the form $(\overline{b},\overline{u})_{t-1,r}$ is the sole non-zero Wall invariant of $(\overline{b},\overline{u})$.
We have $\Ker (u-\id)^k=\im (u-\id)^{r-k}$ for all $k \in \lcro 0,r\rcro$, and $\Ker (u-\id)^k=V$ for all $k \geq r$.
Hence, the source space of $\overline{u}-\id$ is $\Ker (u-\id)^{r-1}/\Ker (u-\id)$.
Then, $\Ker (\overline{u}-\id)^i=\Ker (u-\id)^{i+1}/\Ker (u-\id)$ for all $i \geq 0$.
It follows that all the Jordan cells of $\overline{u}-\id$ have size $r-2$ (and there are as many of them as there are Jordan cells of $u$).
Hence, $(\overline{b},\overline{u})_{t-1,r-2}$ is the sole non-zero Wall invariant of $(\overline{b},\overline{u})$.
It is defined as the bilinear form induced by $(x,y) \mapsto \overline{b}(x,(\overline{u}+\overline{u}^{-1}-2\id)^{(r-3)/2}(y))$ on the quotient space of $\Ker (u-\id)^{r-1}/\Ker (u-\id)$ with $\Ker (u-\id)^{r-2}/\Ker (u-\id)$, and it is therefore naturally isometric to the bilinear form
induced by $(x,y) \mapsto b(x,(\overline{u}+\overline{u}^{-1}-2\id)^{(r-3)/2}(y))$ on the quotient space $\Ker (u-\id)^{r-1}/\Ker (u-\id)^{r-2}$.

Set $v:=u+u^{-1}-2\id$.
The linear map $\alpha:=u- \id$ induces an isomorphism from $V/\Ker (u-\id)^{r-1}$ to $\Ker (u-\id)^{r-1}/\Ker (u-\id)^{r-2}$.
Noting that
$$\forall (x,y)\in V^2, \;
b\bigl(\alpha(x),v^{(r-3)/2}(\alpha(y))\bigr)=b(x,-v^{(r-1)/2}(y))=-b(x,v^{(r-1)/2}(y)),$$
we find that this isomorphism defines an isometry from $V/\Ker (u-\id)^{r-1}$ equipped with $(\overline{x},\overline{y})\mapsto b(x,v^{(r-1)/2}(y))$ to
$\Ker (u-\id)^{r-1}/\Ker (u-\id)^{r-2}$ equipped with $(\overline{x},\overline{y})\mapsto -\overline{b}(x,v^{(r-3)/2}(y))$.
Therefore, $(\overline{b},\overline{u})_{t-1,r-2} \simeq -(b,u)_{t-1,r}$.

Let us come back to the general case where one only assumes that $u$ is unipotent.
The pair $(b,u)$ is known to split into $(b,u) \simeq \underset{i \geq 1}{\bot} (b_i,u_i)$
where each $(b_i,u_i)$ is a $1$-isopair, $n_{t-1,k}(u_i)=0$ for all $k \neq i$, $(b_i,u_i)_{t-1,i} \simeq (b,u)_{t-1,i}$ for all odd $i$,
and $n_{t-1,i}(u)=n_{t-1,i}(u_i)$ for all even $i$.
Clearly, the previous reduction is compatible with isomorphisms and orthogonal direct sums. Compiling the information we have obtained on
the above special cases, we obtain the following conclusion:

\begin{prop}\label{proposition:descenteunipotent1}
Let $(b,u)$ be a unipotent $1$-isopair.
Set $(\overline{b},\overline{u})=(b,u)^{\im (u-\id)}$.
Then:
\begin{enumerate}[(i)]
\item For all odd $r \geq 1$, $(\overline{b},\overline{u})_{t-1,r} \simeq - (b,u)_{t-1,r+2}$.
\item The nilindex of $\overline{u}-\id$ equals $\max(1,\nu-2)$, where $\nu$ stands for the nilindex of $u-\id$.
\end{enumerate}
\end{prop}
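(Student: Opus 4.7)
The plan is to reduce to the case of a single Jordan-cell type and combine via orthogonal direct sums, exactly following the discussion that precedes the statement. More precisely, using the classification of unipotent $1$-isopairs one writes
$$(b,u) \simeq \underset{i \geq 1}{\bot} (b_i,u_i),$$
where each $(b_i,u_i)$ is a unipotent $1$-isopair all of whose Jordan cells for $u_i$ have the same size $i$, with $(b_i,u_i)_{t-1,i} \simeq (b,u)_{t-1,i}$ for odd $i$ and $n_{t-1,i}(u_i)=n_{t-1,i}(u)$ for even $i$. The first step in my plan is to establish that the operation $(b,u) \mapsto (b,u)^{\im(u-\id)}$ is compatible with isometry and with orthogonal direct sums; this is immediate because $\im(u-\id)$ is computed componentwise, as are $\Ker(u-\id)$, and the form $b$ restricts to the forms $b_i$ on the summands.

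The second (and main) step is to handle, for each fixed $r \geq 1$, the situation where all Jordan cells of $u$ have size exactly $r$. In this case $\Ker(u-\id)^k = \im(u-\id)^{r-k}$ for $0 \leq k \leq r$, so the underlying space of $(\overline{b},\overline{u})$ is $\im(u-\id)/\Ker(u-\id)$, which is zero when $r \leq 2$ (so (i) and (ii) both hold trivially, since then $\nu \leq 2$ and the convention $\max(1,\nu-2)=1$ matches the trivial case). For $r \geq 3$, the identity $\Ker(\overline{u}-\id)^i = \Ker(u-\id)^{i+1}/\Ker(u-\id)$ shows that $\overline{u}$ is unipotent with all Jordan cells of size $r-2$, which proves part (ii) in this special case and also shows that the only potentially nonzero Wall invariant of $(\overline{b},\overline{u})$ is $(\overline{b},\overline{u})_{t-1,r-2}$, which vanishes automatically when $r$ is even. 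There remains to treat $r \geq 3$ odd.

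For $r=2s+1$ with $s \geq 1$, I will identify $(\overline{b},\overline{u})_{t-1,r-2}$ with the form induced by $(x,y) \mapsto b(x,v^{s-1}(y))$ on $\Ker(u-\id)^{r-1}/\Ker(u-\id)^{r-2}$, where $v:=u+u^{-1}-2\id$, using that the passage from $(b,u)$ on $\im(u-\id)$ to the quotient by $\Ker(u-\id)$ does not affect the defining quotient (since $\Ker(u-\id) \subset \Ker(u-\id)^{r-2}$). The key calculation is then that $\alpha:=u-\id$ induces an isomorphism
$$\alpha : V/\Ker(u-\id)^{r-1} \overset{\simeq}{\longrightarrow} \Ker(u-\id)^{r-1}/\Ker(u-\id)^{r-2},$$
and, because $v = u^{-1}\alpha^2$, for all $x,y$ in $V$ one has $b(\alpha(x),v^{s-1}(\alpha(y))) = b(x,u^{-(s-1)}\alpha^{2s-1}\alpha(y)) = -b(x,v^{s}(y))$ (using $u^{-1}$ being a $b$-isometry to shift powers of $u^{-1}$ through $b$). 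This sign $-1$ together with the definition of $(b,u)_{t-1,r}$ yields $(\overline{b},\overline{u})_{t-1,r-2} \simeq -(b,u)_{t-1,r}$.

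Finally, I assemble the pieces: combining (ii) over all summands gives that the nilindex of $\overline{u}-\id$ equals the maximum over $i$ of $\max(1,i-2)$, which equals $\max(1,\nu-2)$ where $\nu:=\max\{i : (b_i,u_i) \neq 0\}$; and combining (i) over the summands gives $(\overline{b},\overline{u})_{t-1,r} \simeq -(b,u)_{t-1,r+2}$ for every odd $r \geq 1$, since only the summand $(b_{r+2},u_{r+2})$ contributes. The subtlest point, and the one I expect to be the main obstacle, is the sign computation in step three: one has to be careful that the definitions of the Wall invariants $(b,u)_{t-1,r}$ and $(\overline{b},\overline{u})_{t-1,r-2}$ (modulo the respective images) actually line up under the isomorphism induced by $\alpha$, and that no spurious sign comes from the passage to $u^{-1}$ inside the expression $v^{s-1}\alpha^2$.
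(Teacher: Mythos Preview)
Your approach is exactly the paper's: decompose into summands with a single Jordan-cell size, compute the induced pair on each, and reassemble. The only slip is in your displayed intermediate step: moving $\alpha=u-\id$ across $b$ uses $\alpha^\star=u^{-1}-\id=-u^{-1}\alpha$, so $b(\alpha(x),v^{s-1}\alpha(y))=b(x,-u^{-1}\alpha\,v^{s-1}\alpha(y))=-b(x,u^{-s}\alpha^{2s}(y))=-b(x,v^s(y))$; your written middle term $b(x,u^{-(s-1)}\alpha^{2s-1}\alpha(y))$ is off by a factor of $-u^{-1}$, which is precisely the ``spurious sign from $u^{-1}$'' you flagged as the obstacle.
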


\subsubsection{Twisted descent}\label{section:twisteddescent}

Here, we use a slightly different process.
Again, we let $(b,u)$ be a unipotent $1$-isopair.
This time around, we consider the induced pair $(\overline{b},\overline{u}):=(b,u)^{\Ker (u-\id)+\im (u-\id)}$.

Noting that $(\Ker (u-\id)+\im (u-\id))^{\bot_b}=\im (u-\id) \cap \Ker (u-\id)$ is included in $\Ker (u-\id)+\im (u-\id)$,
we see that the underlying vector space of $(\overline{b},\overline{u})$ is the quotient space
$(\Ker (u-\id) +\im (u-\id))/(\Ker (u-\id) \cap \im (u-\id))$.
From there, the results are very similar to the one of the previous section, with the notable exception
of the case where $u$ has only Jordan cells of size $1$ (i.e.\ $u=\id$).
In that case indeed, we have $(\overline{b},\overline{u})=(b,u)$, leading to
$(\overline{b},\overline{u})_{t,1}=(b,u)_{t-1,1}$ (instead of $(\overline{b},\overline{u})_{t-1,1}=0$).

Using the same method as in Section \ref{section:descent}, we obtain:

\begin{prop}\label{proposition:descentenilpotente2}
Let $(b,u)$ be a unipotent $1$-isopair.
Set $(\overline{b},\overline{u})=(b,u)^{\Ker (u-\id)+\im (u-\id)}$.
Then:
\begin{enumerate}[(i)]
\item For every odd $r \geq 2$, $(\overline{b},\overline{u})_{t-1,r} \simeq -(b,u)_{t-1,r+2}$.
\item $(\overline{b},\overline{u})_{t-1,1} \simeq (b,u)_{t-1,1}\, \bot \,-(b,u)_{t-1,3}$.
\end{enumerate}
\end{prop}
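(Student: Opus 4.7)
The plan is to follow the exact pattern of the proof of Proposition \ref{proposition:descenteunipotent1} in Section \ref{section:descent}: first reduce by orthogonal decomposition to pairs in which all Jordan cells of $u$ have the same size $r$, then compute the induced invariants case by case on each homogeneous summand. Specifically, using the standard splitting $(b,u) \simeq \underset{r \geq 1}{\bot} (b_r,u_r)$ (with $(b_r,u_r)$ carrying exactly the size-$r$ Jordan cells of $u$ and having $(b_r,u_r)_{t-1,r} \simeq (b,u)_{t-1,r}$ in the odd case), the compatibility of the induced-pair construction with orthogonal sums reduces everything to three elementary computations.

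Second, I would run the three homogeneous cases, which only differ from Section \ref{section:descent} in the trivial sizes. Let $V$ be the underlying space and set $K:=\Ker(u-\id)$, $I:=\im(u-\id)$.

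\textbf{Case $r=1$:} Here $u=\id_V$, so $K=V$ and $I=\{0\}$, whence $K+I=V$ and $K\cap I=\{0\}$; thus $(\overline{b},\overline{u})=(b,u)$, contributing $(b,u)_{t-1,1}$ to $(\overline{b},\overline{u})_{t-1,1}$.

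\textbf{Case $r=2$:} Here $K=I$, so $K+I=K\cap I$ and the induced pair is trivial.

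\textbf{Case $r\geq 3$:} Here $K\subsetneq I$, so $K+I=I$ and $K\cap I=K$, giving an induced space $I/K$ on which $\overline{u}$ has only Jordan cells of size $r-2$. This is exactly the same reduction pattern as in Section \ref{section:descent} except that we start from $I/K$ instead of $I/(K\cap I)$; but since $K\subset I$ we have $K\cap I=K$, so the two reductions literally coincide here.

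Third, for odd $r\geq 3$ the sign computation of Section \ref{section:descent} applies verbatim: the linear map $\alpha:=u-\id_V$ induces an isomorphism from $V/\Ker(u-\id)^{r-1}$ onto $\Ker(\overline{u}-\id)^{r-2}/\Ker(\overline{u}-\id)^{r-3}$ (shifting by two), and the identity $\alpha^\star=-u^{-1}\alpha$ shifts the form $(x,y)\mapsto b(x,v^{(r-1)/2}y)$ on the source onto the negative of the form $(x,y)\mapsto \overline{b}(x,v^{(r-3)/2}y)$ on the target. This yields $(\overline{b_r},\overline{u_r})_{t-1,r-2}\simeq -(b_r,u_r)_{t-1,r}$. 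Combining all pieces: for odd $r\geq 3$ only the size-$(r+2)$ summand contributes to $(\overline{b},\overline{u})_{t-1,r}$, which gives (i); and for $r=1$ both the size-$1$ summand (contributing $(b,u)_{t-1,1}$) and the size-$3$ summand (contributing $-(b,u)_{t-1,3}$ by the shift computation above) contribute, which gives (ii).

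The bulk of the work is already done in Section \ref{section:descent}; the only genuinely new point is the observation that in twisted descent the size-$1$ summand is \emph{preserved} rather than annihilated, which is exactly what produces the extra $(b,u)_{t-1,1}$ term in (ii). Consequently I expect no real obstacle, only bookkeeping to confirm that the isomorphism $\alpha : V/\Ker(u-\id)^{r-1}\to \im(u-\id)/\Ker(u-\id)$ used for $r=3$ is compatible with the quotient structure of $(\overline{b},\overline{u})$, i.e.\ lands in the correct quotient space $\Ker(\overline{u}-\id)/(\Ker(\overline{u}-\id)^0+\im(\overline{u}-\id))$; this is immediate since $\overline{u}=\id$ on the induced space when $r=3$.
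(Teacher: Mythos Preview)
Your proposal is correct and follows essentially the same approach as the paper: reduce to homogeneous Jordan size via the standard orthogonal decomposition, observe that for $r\geq 2$ one has $\Ker(u-\id)\subset \im(u-\id)$ so that twisted descent literally coincides with the ordinary descent of Section~\ref{section:descent}, and note that the only genuine difference is the size-$1$ case, where the pair is preserved rather than killed, producing the extra $(b,u)_{t-1,1}$ summand in~(ii). The paper's own argument is exactly this, stated even more tersely.
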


\subsubsection{Folding}\label{section:symmetrization}

Again, we let $(b,u)$ be a unipotent $1$-isopair. Let $k \geq 1$.

Here, we consider the induced pair $(b,u)^{\Ker (u-\id)^k}$, which is defined on the quotient space
$$W:=\Ker (u-\id)^k/(\im (u-\id)^k\cap \Ker (u-\id)^k)$$
because $(\Ker (u-\id)^k)^{\bot_b}=\im (u-\id)^k$.

Again, we compute the Wall invariants of $(\overline{b},\overline{u})=(b,u)^{\Ker (u-\id)^k}$ as functions of those
of $(b,u)$ by considering the special case where $u$ has only Jordan cells of size $r$ for some $r \geq 1$.
If $k\geq r$ then $\im (u-\id)^k \cap \Ker (u-\id)^k=\{0\}$ and $\Ker (u-\id)^k=V$, therefore
$(\overline{b},\overline{u})=(b,u)$.

Assume now that $k<r$. Then $\im (u-\id)^k=\Ker (u-\id)^{r-k}$ and hence
$\im (u-\id)^k \cap \Ker (u-\id)^k=\Ker (u-\id)^{r-k} \cap \Ker (u-\id)^k=\Ker (u-\id)^{\min(r-k,k)}$.
\begin{itemize}
\item If $k \leq r-k$ then $W=\{0\}$ and $(\overline{b},\overline{u})$ vanishes.
\item Assume that $k>r-k$. Then $W=\Ker (u-\id)^k/\Ker (u-\id)^{r-k}$, and $\overline{u}$ has only Jordan cells of size $2k-r$.
The invariant $(\overline{b},\overline{u})_{t-1,2k-r}$ is defined only if $r$ is odd.
So assume that $r$ is odd. Then the $(\overline{b},\overline{u})_{t-1,2k-r}$ invariant
is the bilinear form induced by
$(\overline{x},\overline{y}) \mapsto b(x,v^{(2k-r-1)/2}(y))$ on $\Ker (u-\id)^k/\Ker (u-\id)^{k-1}$, where $v:=u+u^{-1}-2\id$.
Noting that $b((u-\id)^{r-k}(x),v^{(2k-r-1)/2}((u-\id)^{r-k}(y)))=(-1)^{r-k} b(x,v^{(r-1)/2}(y))$ for all $x,y$ in $V$, we obtain that
the linear map $x \mapsto (u-\id)^{r-k}(x)$ induces an isometry
from $(b,u)_{t-1,r}$ to $(-1)^{r-k}(\overline{b},\overline{u})_{t-1,2k-r}$.
Hence, $(\overline{b},\overline{u})_{t-1,2k-r} \simeq (-1)^{r-k} (b,u)_{t-1,r}$, and it is the sole possible non-vanishing Wall invariant of $(\overline{b},\overline{u})$.
\end{itemize}

As in Section \ref{cor:twistedpairs}, we piece the previous results together to obtain the general form of the quadratic Wall invariants of $(\overline{b},\overline{u})$:

\begin{prop}\label{prop:symmetrization}
Let $(b,u)$ be a unipotent $1$-isopair. Let $k \geq 1$, and
set $(\overline{b},\overline{u})=(b,u)^{\Ker (u-\id)^k}$.
Then:
\begin{enumerate}[(i)]
\item For all odd $r \in \lcro 1,k-1\rcro$, $(\overline{b},\overline{u})_{t-1,r} \simeq
(b,u)_{t-1,r} \bot (-1)^{k+r}(b,u)_{t-1,2k-r}$.
\item $(\overline{b},\overline{u})_{t-1,k} \simeq (b,u)_{t-1,k}$ if $k$ is odd.
\item For all odd $r >k$, $(\overline{b},\overline{u})_{t-1,r}=0$.
\item The nilindex of $\overline{u}-\id$ is less than or equal to $k$.
\end{enumerate}
\end{prop}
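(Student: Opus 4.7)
The plan is to mirror the strategy used for the descent and twisted-descent constructions above: first reduce to the case where $u$ has only Jordan cells of a single size $r$, then compute directly in that case, then reassemble. Since the folding operation $(b,u)\mapsto(b,u)^{\Ker(u-\id)^k}$ is compatible with orthogonal direct sums and with isometry (routine from the definition of an induced pair), the classification of $1$-isopairs lets me decompose $(b,u)\simeq\underset{i\geq 1}{\bot}(b_i,u_i)$ where each $u_i$ has Jordan cells of a single size $i$, with $(b_i,u_i)_{t-1,i}\simeq(b,u)_{t-1,i}$ whenever $i$ is odd. This reduces the task to the single-size case.

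So I assume that $u$ is unipotent with all Jordan cells of common size $r$. In this setting the classical identities $\Ker(u-\id)^j=\im(u-\id)^{r-j}$ hold for $0\leq j\leq r$, and consequently $\im(u-\id)^k\cap\Ker(u-\id)^k=\im(u-\id)^{\max(k,r-k)}$. Three cases then arise. If $k\geq r$, then $\Ker(u-\id)^k=V$ and $\im(u-\id)^k=\{0\}$, hence $(\overline{b},\overline{u})=(b,u)$. If $2k\leq r$, the quotient space $\Ker(u-\id)^k/(\im(u-\id)^k\cap\Ker(u-\id)^k)$ is zero and $(\overline{b},\overline{u})$ is trivial. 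If $r-k<k<r$, the underlying space of $(\overline{b},\overline{u})$ is $\Ker(u-\id)^k/\Ker(u-\id)^{r-k}$ and $\overline{u}$ has only Jordan cells of size $2k-r$, so the only possibly nontrivial quadratic Wall invariant is $(\overline{b},\overline{u})_{t-1,2k-r}$, which vanishes unless $r$ is odd.

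Assuming $r$ odd in the third case, I use the $b$-selfadjointness of $v:=u+u^{-1}-2\id$ together with the identity $v=u^{-1}(u-\id)^2$ to check, by direct calculation in exactly the same spirit as in Section \ref{section:descent}, that the map $x\mapsto(u-\id)^{r-k}(x)$ induces a vector-space isomorphism from $V/\Ker(u-\id)^k$ onto $\Ker(u-\id)^k/\Ker(u-\id)^{r-k}$ that carries the form defining $(b,u)_{t-1,r}$ to $(-1)^{r-k}$ times the form defining $(\overline{b},\overline{u})_{t-1,2k-r}$. This gives $(\overline{b},\overline{u})_{t-1,2k-r}\simeq(-1)^{r-k}(b,u)_{t-1,r}$.

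Coming back to the general case, for each odd $s\geq 1$ the summands of $(b,u)=\underset{i\geq 1}{\bot}(b_i,u_i)$ that contribute to $(\overline{b},\overline{u})_{t-1,s}$ are exactly $(b_s,u_s)$, whenever $s\leq k$ (contributing $(b,u)_{t-1,s}$), and $(b_{2k-s},u_{2k-s})$, whenever $0<s<k$ (contributing $(-1)^{(2k-s)-k}(b,u)_{t-1,2k-s}=(-1)^{k+s}(b,u)_{t-1,2k-s}$, noting that $2k-s$ is automatically odd when $s$ is). Separating the subcases $s<k$, $s=k$, $s>k$ then produces (i), (ii), and (iii); assertion (iv) is immediate because each folded summand contributes Jordan cells of size at most $k$. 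The main pitfall will be the sign tracking in the core case, as $(-1)^{r-k}$ must be read off simultaneously from the power of $v$ involved and from the $b$-adjoint of $(u-\id)^{r-k}$.
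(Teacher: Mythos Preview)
Your approach is essentially identical to the paper's: reduce to the single-Jordan-size case via the orthogonal decomposition $(b,u)\simeq\bot_{i\geq 1}(b_i,u_i)$, compute $(\overline{b},\overline{u})_{t-1,2k-r}\simeq(-1)^{r-k}(b,u)_{t-1,r}$ in the nontrivial range $r-k<k<r$ using the map $(u-\id)^{r-k}$, and reassemble. One small slip to fix: the isomorphism induced by $(u-\id)^{r-k}$ that carries one Wall form to the other goes from $V/\Ker(u-\id)^{r-1}$ to $\Ker(u-\id)^k/\Ker(u-\id)^{k-1}$ (these are the actual underlying spaces of $(b,u)_{t-1,r}$ and $(\overline{b},\overline{u})_{t-1,2k-r}$ in the single-size case), not from $V/\Ker(u-\id)^k$ to $\Ker(u-\id)^k/\Ker(u-\id)^{r-k}$ as you wrote.
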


\subsection{The key lemma: statement and consequences}\label{section:induction}

Our key result is the implication (i) $\Rightarrow$ (ii) in Theorem \ref{theo:orthogonalunipotent}
in the special case where $(u-\id)^3=0$.

\begin{lemma}[Key lemma]\label{lemma:keyunipotent}
Let $(b,u)$ be a $U_2$-splittable $1$-isopair in which $(u-\id)^3=0$.
Then $(b,u)_{t-1,1}$ Witt-simplifies $-(b,u)_{t-1,3}$.
\end{lemma}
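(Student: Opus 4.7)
The plan is to express the two Wall invariants as restrictions of $b$ to concrete subquotients of $V$, identify them as restrictions of a single non-degenerate ambient form, and use the $U_2$-splitting to produce a sufficiently large totally isotropic subspace witnessing the Witt-simplification.

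Write $u_1 = \id + a_1$ and $u_2 = \id + a_2$ with $a_i^2 = 0$ and $a_i^\star = -a_i$, and set $N := u - \id = a_1 + a_2 + a_1 a_2$. A direct calculation yields $\Ker N = \Ker(a_1 + a_2)$ and $\im N = \im(a_1 + a_2)$. Put $E := \Ker N \cap \im N$. Since $N^3 = 0$ one obtains $u^{-1} = \id - N + N^2$, hence $v - 2\id = u^{-1} N^2 = N^2$, and a quick computation using the adjointness $N^\star = -N + N^2$ (together with the identity $b(N^2 y, Ny) = 0$) gives $b(y, (v - 2\id)\, y) = -b(Ny, Ny)$ for every $y \in V$. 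Under the isomorphism $V/\Ker N^2 \xrightarrow{\sim} \im N / E$, $\bar y \mapsto \overline{Ny}$, this identifies $(b,u)_{t-1,3}$ with $-b|_{\im N/E}$, while $(b,u)_{t-1,1} = b|_{\Ker N/E}$ by definition. Write $X_1 := \Ker N/E$, $X_2 := \im N/E$, $B_0 := (b,u)_{t-1,1}$, $B_1 := (b,u)_{t-1,3}$; the goal is to show that $b|_{X_1}$ Witt-simplifies $-b|_{X_2}$.

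Set $W := \Ker N + \im N$; since $(\Ker N)^{\bot_b} = \im N$ and $(\im N)^{\bot_b} = \Ker N$, one has $W^{\bot_b} = E \subset W$, so $b$ induces a non-degenerate symmetric form $b_Y$ on $Y := W/E$, with $Y = X_1 \oplus X_2$, $b_Y|_{X_1} = B_0$, and $b_Y|_{X_2} = -B_1$. The coisotropy $W^{\bot_b} \subset W$ yields $b \simeq b_Y \bot H$ for a hyperbolic form $H$ of rank $2\dim E$, so $b$ and $b_Y$ are Witt-equivalent; combined with Proposition \ref{prop:wittequivalencetype} (which gives $b \sim B_0 \bot -B_1$), we deduce that $b_Y$ and $B_0 \bot -B_1$ are Witt-equivalent non-degenerate forms of equal rank, hence isometric: $b_Y \simeq B_0 \bot -B_1$. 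Consequently, the Witt-simplification condition $\nu(-B_1) + \nu(-B_1 \bot B_0) \geq \rk(B_1)$ reduces to the inequality $\nu(b_Y) \geq \dim X_2 - \nu(B_1)$, i.e., to the existence of a totally $b_Y$-isotropic subspace $L$ of $Y$ with $\dim L \geq \dim X_2 - \nu(B_1)$.

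The final and main step is the construction of such an $L$ from the $U_2$-structure. The key inputs are that each $V_i := \im a_i$ is totally $b$-isotropic (since $V_i \subset \Ker a_i = V_i^{\bot_b}$) and that $\im N \subset V_1 + V_2$. Mirroring the strategy of Section 5 of \cite{dSPsquarezeroquadratic} — where the analogous lemma is established for sums $u = a + c$ of two square-zero $b$-skewselfadjoint endomorphisms — one assembles $L$ from the images in $Y$ of carefully chosen subspaces inside $V_1, V_2$, governed by their interactions with $\Ker N$, $\im N$, and $E$. The algebraic identity $(v-2\id)^2 = 0$, i.e., $(a_1 a_2)^2 + (a_2 a_1)^2 = 0$, controls the cross-terms that arise in the isotropy verification. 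The principal obstacle is this last construction: because $a_1$ and $a_2$ do not commute and $N$ differs from $a_1 + a_2$ by the correction $a_1 a_2$, the "diagonal" subspace used in the sum setting must be adjusted; verifying total $b_Y$-isotropy and obtaining the required lower bound on $\dim L$ both demand careful bookkeeping of the interactions between $a_1$, $a_2$ and the subspaces $\Ker N$, $\im N$, $E$.
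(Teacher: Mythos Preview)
Your setup is sound: the identifications $B_0 \simeq b|_{\Ker N/E}$ and $B_1 \simeq -\,b|_{\im N/E}$ are correct, the space $Y=W/E$ with $W=\Ker N+\im N$ carries the non-degenerate form $b_Y \simeq B_0 \bot (-B_1)$, and the Witt-simplification statement does reduce to exhibiting a totally $b_Y$-isotropic subspace $L\subset Y$ with $\dim L\ge \dim X_2-\nu(B_1)$. (In fact $X_1^{\bot_{b_Y}}=X_2$, so $b_Y=B_0\bot(-B_1)$ follows directly, without the detour through Proposition~\ref{prop:wittequivalencetype}.)

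The gap is your final paragraph: it is a plan, not a proof. You say ``one assembles $L$ from the images in $Y$ of carefully chosen subspaces inside $V_1,V_2$'' and that this ``demands careful bookkeeping'', but the bookkeeping is the entire content of the lemma. The paper's own argument shows why this is not routine. It first reduces to the case where $(b,u)$ is \emph{indecomposable as a $U_2$-splittable pair} and $B_3$ is non-hyperbolic; this reduction is legitimate because the target inequality is subadditive under orthogonal sums. Only under this hypothesis does one obtain the two structural facts that drive the construction: (i) $a_1$ maps $V$ into $\Ker v=\Ker N^2$, and (ii) $a_1$ maps $\Ker v$ into $\Ker N+\im N$. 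Each of these is proved by contradiction, using Lemma~\ref{lemma:squarezero} to locate a $b$-regular $u_1,u_2$-stable summand of positive dimension whenever the conclusion fails. With (i) and (ii) in hand one builds the isotropic subspace as the image of a linear map $f:V_3\oplus V_2\to Y$ defined via $a_1$, and bounds $\rk f$ from below by showing the projection of $\Ker f$ to $V/\Ker N^2$ is totally $B_3$-isotropic.

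None of this structure is visible from the raw datum $\im a_i\subset(\im a_i)^{\bot_b}$ alone; in particular $\im a_i$ need not lie in $W$, so passing to $Y$ directly does not produce a candidate for $L$. Your appeal to the sum case in \cite{dSPsquarezeroquadratic} is not a substitute: as you note yourself, the cross-term $a_1a_2$ breaks the symmetry, and the paper's fix is precisely the indecomposability reduction plus Steps~2--3, not a perturbation of the additive argument.
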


The proof of this lemma is the most technical part of the present manuscript, so we wait until the next section to give it.
Here, we shall show how this lemma, combined with the results of Section \ref{section:induced},
helps one recover the necessary condition featured in Theorem \ref{theo:orthogonalunipotent}.

\begin{prop}\label{prop:CNunipotentcomplete}
Let $(b,u)$ be a $U_2$-splittable unipotent $1$-isopair.
Then $(-1)^k (b,u)_{t-1,2k+1}$ Witt-simplifies $\underset{i >k}{\bot} (-1)^i (b,u)_{t-1,2i+1}$ for all $k \geq 0$.
\end{prop}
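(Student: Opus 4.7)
The plan is to proceed by induction on the nilindex $\nu$ of $u-\id$. Base case $\nu\le 3$: when $\nu\le 2$ only $B_0$ can be nonzero and every assertion is trivial; when $\nu=3$ only $B_0$ and $B_1$ are possibly nonzero, so cases $k\ge 1$ are vacuous and the case $k=0$ is precisely the conclusion of the Key Lemma (Lemma~\ref{lemma:keyunipotent}).

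Now fix $\nu\ge 4$ and assume the proposition for every $U_2$-splittable unipotent $1$-isopair of nilindex $<\nu$. For any $k\ge 1$, I apply the descent construction. The Stabilization Lemma (Corollary~\ref{cor:stabilizationlemma}) gives that $\im(u-\id)$ is stable under both $u_1$ and $u_2$, so by Remark~\ref{remark:inducedpairs} the induced pair $(\overline{b},\overline{u}):=(b,u)^{\im(u-\id)}$ is $U_2$-splittable; by Proposition~\ref{proposition:descenteunipotent1} it has nilindex $\nu-2<\nu$ and Wall invariants $(\overline{b},\overline{u})_{t-1,2i+1}\simeq -B_{i+1}$. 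Applying the induction hypothesis to this pair at index $k-1$ and re-expressing yields that $(-1)^{k}B_{k}$ Witt-simplifies $\underset{j>k}{\bot}(-1)^{j}B_{j}$.

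The case $k=0$ is the main obstacle, because descent only produces relations shifted upward in the index and therefore cannot by itself yield a relation involving $B_0$. For this case I use instead the folded pair $(b^{f},u^{f}):=(b,u)^{\Ker(u-\id)^{\nu-1}}$; again by the Stabilization Lemma and Remark~\ref{remark:inducedpairs} this is a $U_2$-splittable unipotent $1$-isopair, and Proposition~\ref{prop:symmetrization}(iv) shows that its nilindex is at most $\nu-1<\nu$. Unpacking parts (i)--(iii) of Proposition~\ref{prop:symmetrization} with parameter $\nu-1$ and using that $B_i=0$ beyond the top index of $(b,u)$, one verifies
\[
(b^{f},u^{f})_{t-1,1}\simeq B_{0}\qquad\text{and}\qquad \underset{i\ge 1}{\bot}(-1)^{i}(b^{f},u^{f})_{t-1,2i+1}\simeq \underset{i\ge 1}{\bot}(-1)^{i}B_{i};
\]
applying the induction hypothesis at $k=0$ to $(b^{f},u^{f})$ then delivers exactly the $k=0$ case of the proposition for $(b,u)$.

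The only real calculation is the second displayed isometry above. For $\nu$ even every folded odd Wall invariant equals the corresponding $B_j$ and the identity is immediate. For $\nu$ odd, say $\nu=2K+1$, the sole combined contribution from Proposition~\ref{prop:symmetrization}(i) occurs at $j=K-1$, where the sign $(-1)^{\nu}=-1$ forces $(b^{f},u^{f})_{t-1,2K-1}\simeq B_{K-1}\bot -B_{K}$; once this is weighted by $(-1)^{K-1}$ in the alternating sum, the extra summand produces precisely the missing $(-1)^{K}B_{K}$ term of the original alternating sum, yielding the desired identification and closing the induction.
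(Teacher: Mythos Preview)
Your proof is correct and follows essentially the same strategy as the paper: descent via $(b,u)^{\im(u-\id)}$ to handle all indices $k\ge 1$, and folding via $(b,u)^{\Ker(u-\id)^{\nu-1}}$ to handle $k=0$. The only difference is organizational: the paper runs the induction on the nilindex by steps of two (working with the assumption $(u-\id)^{2k+1}=0$) and therefore applies the folding construction twice in succession to drop the nilindex by two, whereas you induct by single steps and fold once; your parity analysis correctly shows that a single fold with parameter $\nu-1$ already yields $(b^f,u^f)_{t-1,1}\simeq B_0$ and $\underset{i\ge 1}{\bot}(-1)^i(b^f,u^f)_{t-1,2i+1}\simeq\underset{i\ge 1}{\bot}(-1)^iB_i$, so your streamlined version works.
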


Here, we assume the validity of Lemma \ref{lemma:keyunipotent},
and we prove Proposition \ref{prop:CNunipotentcomplete} by induction on the nilindex of $u-\id$ by steps of two.
If $(u-\id)^3=0$ then the result follows directly from Lemma \ref{lemma:keyunipotent} because in that case
$(b,u)_{t-1,2i+1}=0$ for all $i>1$, and hence the conclusion is limited to the statement that
$(b,u)_{t-1,1}$ Witt-simplifies $-(b,u)_{t-1,3}$.
Assume now that, for some integer $k \geq 2$, one has $(u-\id)^{2k+1}=0$.

We choose $U_2$-elements $u_1$ and $u_2$ of $\Ortho(b)$ such that $u=u_1u_2$.
By Lemma \ref{lemma:commutation}, $\im (u-\id)$ and $\Ker (u-\id)$ are stable under
$u_1$ and $u_2$.

Let us consider the induced $1$-isopair $(\overline{b},\overline{u}):=(b,u)^{\im (u-\id)}$, whose underlying vector space is
$\im (u-\id)/(\Ker (u-\id)\cap \im (u-\id))$. Since $\im (u-\id)$ is stable under $u_1$ and $u_2$,
we obtain (see Remark \ref{remark:inducedpairs}) that $(\overline{b},\overline{u})$ is $U_2$-splittable.
Then, $(\overline{u}-\id)^{2k-1}=0$, and by induction we recover that
$(-1)^\ell (\overline{b},\overline{u})_{t-1,2\ell+1}$ Witt-simplifies $\underset{i >\ell}{\bot} (-1)^i (\overline{b},\overline{u})_{t-1,2i+1}$ for all $\ell \geq 0$. Using Proposition \ref{proposition:descenteunipotent1}, we deduce that $(-1)^{\ell+1} (b,u)_{t-1,2\ell+1}$ Witt-simplifies $\underset{i >\ell}{\bot} (-1)^{i+1} (b,u)_{t-1,2i+1}$ for all $\ell \geq 1$, and hence $(-1)^\ell (b,u)_{t-1,2\ell+1}$ Witt-simplifies $\underset{i >\ell}{\bot} (-1)^{i} (b,u)_{t-1,2i+1}$ for all $\ell \geq 1$.

It only remains to obtain the conclusion for $\ell=0$. To do this, we go back to
$(b,u)$ and we apply the folding technique (Section \ref{section:symmetrization}) twice by
successively considering the induced pairs $(b',u'):=(b,u)^{\Ker (u-\id)^{2k}}$
and $(b'',u''):=(b',u')^{\Ker (u'-\id)^{2k-1}}$.
By Corollary \ref{cor:stabilizationlemma}, we know that $\Ker(u-\id)^{2k}$ is stable under $u_1$ and $u_2$, and
hence $(b',u')$ is $U_2$-splittable. We obtain likewise that $(b'',u'')$ is $U_2$-splittable.
Since $(u''-\id)^{2k-1}=0$, we find by induction that $(b'',u'')_{t-1,1}$ Witt-simplifies $\underset{i >0}{\bot} (-1)^{i} (b'',u'')_{t-1,2i+1}$.

Next, we note that $(b',u')_{t-1,2i+1}$ vanishes for all $i \geq k$, and it follows from Proposition \ref{prop:symmetrization}
that $(b',u')_{t-1,2i+1} \simeq (b'',u'')_{t-1,2i+1}$ for all $i \geq 0$.
Hence $(b',u')_{t-1,1}$ Witt-simplifies $\underset{i >0}{\bot} (-1)^{i} (b',u')_{t-1,2i+1}$.

Finally, noting that $(b,u)_{t-1,2l+1}=0$ for all $l > k$, we deduce from Proposition \ref{prop:symmetrization} that
$(b',u')_{t-1,2k-1} \simeq -(b,u)_{t-1,2k+1} \bot (b,u)_{t-1,2k-1}$, whereas
$(b',u')_{t-1,2i+1} \simeq (b,u)_{t-1,2i+1}$ for all $i \in \lcro 0,k-2\rcro$.
Hence $\underset{i >0}{\bot} (-1)^{i} (b,u)_{t-1,2i+1} \simeq \underset{i >0}{\bot} (-1)^{i} (b',u')_{t-1,2i+1}$,
whereas $(b,u)_{t-1,1} \simeq (b',u')_{t-1,1}$, and we conclude that
$(b,u)_{t-1,1}$ Witt-simplifies $\underset{i >0}{\bot} (-1)^i (b,u)_{t-1,2i+1}$.

This completes our inductive proof.

The proof of (ii) $\Rightarrow$ (i) in Theorem \ref{theo:orthogonalunipotent} is now entirely reduced to the proof of Lemma \ref{lemma:keyunipotent}, which
is detailed in the next and last section.

\subsection{Proof of the key lemma}\label{section:keylemmaproof}

We start with a purely algebraic result.

\begin{lemma}\label{lemma:skewcommutelemma}
Let $a,a'$ be square-zero elements of an $\F$-algebra $\mathcal{A}$ whose unity we denote by $\mathbf{1}$, set $u:=(\mathbf{1}+a)(\mathbf{1}+a')$ and assume that
$(u-\mathbf{1})^2=0$. Then, $a(u-u^{-1})=-(u-u^{-1})a$.
\end{lemma}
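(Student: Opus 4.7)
The plan is to exploit the hypothesis $(u-\mathbf{1})^2=0$ in the sharpest possible form: it is equivalent to $u+u^{-1}=2\cdot\mathbf{1}$. Indeed, expanding $(u-\mathbf{1})^2=0$ yields $u^2=2u-\mathbf{1}$, and multiplying by $u^{-1}$ gives $u+u^{-1}=2\cdot\mathbf{1}$. Consequently
\[
u-u^{-1}=2(u-\mathbf{1})=2(a+a'+aa'),
\]
where the last equality uses $u-\mathbf{1}=(\mathbf{1}+a)(\mathbf{1}+a')-\mathbf{1}=a+a'+aa'$.

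Using $a^2=0$, I would then compute the two sides of the target identity directly:
\[
a(u-u^{-1})=2\bigl(a^2+aa'+a^2a'\bigr)=2\,aa',
\]
\[
(u-u^{-1})a=2\bigl(a^2+a'a+aa'a\bigr)=2(a'a+aa'a).
\]
So the target identity $a(u-u^{-1})=-(u-u^{-1})a$ is equivalent to the single relation
\[
aa'+a'a+aa'a=0. \qquad (\star)
\]

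To establish $(\star)$, I would expand $(u-\mathbf{1})^2=(a+a'+aa')^2=0$. Exploiting $a^2=(a')^2=0$ the cross-terms simplify, and one obtains
\[
aa'+a'a+a'aa'+aa'a+aa'aa'=0. \qquad (\dagger)
\]
The main obstacle (though a very mild one) is to eliminate the two ``junk'' terms $a'aa'$ and $aa'aa'$. For this I would right-multiply $(\dagger)$ by $a'$: again using $(a')^2=0$, every term except $a'aa'\cdot a'+aa'aa'\cdot a'$ vanishes, leaving
\[
a'aa'+aa'aa'=0.
\]
Substituting this back into $(\dagger)$ collapses the last three terms, giving exactly $(\star)$, and the lemma follows.
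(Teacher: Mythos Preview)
Your proof is correct and follows essentially the same route as the paper's: both reduce the claim to elementary algebraic identities among $a$ and $a'$ derived from $(u-\mathbf{1})^2=0$. The paper's version is marginally shorter because it computes $u^{-1}=(\mathbf{1}-a')(\mathbf{1}-a)$ explicitly, giving $u+u^{-1}-2\cdot\mathbf{1}=aa'+a'a=0$ in one stroke; from this $aa'a=-a'a^2=0$, and then $a(u-u^{-1})=2aa'$ and $(u-u^{-1})a=2a'a=-2aa'$ follow directly, bypassing your expansion of $(a+a'+aa')^2$ and the elimination step.
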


\begin{proof}
Setting $v:=u+u^{-1}-2.\mathbf{1}$, we find $v=aa'+a'a$, while $v=0$ because $(u-\mathbf{1})^2=0$.
It follows that $aa'a=-a'a^2=0$.
Next $au=a(\mathbf{1}+a')=a+aa'$ and
$$au^{-1}=a(\mathbf{1}-a')(\mathbf{1}-a)=a(\mathbf{1}-a-a'+a'a)=a-aa'+aa'a=a-aa',$$
and hence $a(u-u^{-1})=2aa'$. Likewise
$$ua=(\mathbf{1}+a)(\mathbf{1}+a')a=(\mathbf{1}+a+a'+aa')a=a+a'a+aa'a=a+a'a,$$
and $u^{-1} a=(\mathbf{1}-a')(\mathbf{1}-a)a=(\mathbf{1}-a')a=a-a'a$,
and hence $(u-u^{-1})a=2a'a$. The conclusion follows by using the fact that $a'a=-a'a$.
\end{proof}

Next, we recall a basic result that was already featured in \cite{dSPsquarezeroquadratic} (see step 2 in section 5.6 there):

\begin{lemma}\label{lemma:squarezero}
Let $b'$ be a non-degenerate symmetric (respectively, skewsymmetric) bilinear form
and $u'$ be a $b'$-skewselfadjoint (respectively, $b'$-selfadjoint) endomorphism such that $(u')^2=0$ and $u' \neq 0$.

Then there exists a $b'$-hyperbolic (respectively, a $b'$-symplectic) family $(x_1,x_2,y_1,y_2)$ such that $u'(x_2)=x_1$ and $u'(y_1)=-y_2$, leading to $u'(x_1)=0$ and $u'(y_2)=0$.
\end{lemma}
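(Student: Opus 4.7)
The plan is to construct the four vectors by first using $u'$ to fix $x_1$ and $x_2$, then invoking the non-degeneracy of $b'$ to locate $y_1$, and finally performing three affine adjustments inside the fibres of $u'$ in order to secure the required orthogonality relations. A preliminary observation is that $(u')^2=0$ gives $\im u' \subset \Ker u'$, and that, because $u'$ is $b'$-skewselfadjoint (respectively $b'$-selfadjoint), one has $\Ker u'=(\im u')^{\bot_{b'}}$. Hence $\im u'$ is a totally $b'$-isotropic subspace; in particular, any nonzero $x_1 \in \im u'$ automatically satisfies $b'(x_1,x_1)=0$, and writing $x_1=u'(x_2)$, the adjointness identity together with $\charac(\F)\neq 2$ yields $b'(x_1,x_2)=0$.

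Next, by non-degeneracy I would select $y_1$ with $b'(x_1,y_1)=1$ and set $y_2:=-u'(y_1)$. Adjointness forces $y_1 \notin \Ker u'$ (otherwise $b'(x_1,y_1)=\pm b'(x_2,u'(y_1))=0$), so $y_2$ is a nonzero element of $\im u'$, hence isotropic and $b'$-orthogonal to $x_1$. Short calculations then give $b'(y_1,y_2)=0$ and the identity $b'(x_2,y_2)=\varepsilon\,b'(x_1,y_1)=\varepsilon$, with $\varepsilon=+1$ in the symmetric case and $\varepsilon=-1$ in the symplectic case. The specific sign $-y_2$ in the statement is precisely what matches the paper's convention for a hyperbolic (respectively symplectic) family of this parity.

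It then remains to kill the possibly nonzero scalars $b'(y_1,y_1)$, $b'(x_2,x_2)$ and $b'(x_2,y_1)$; in the skewsymmetric setting the first two vanish automatically by the skewsymmetry of $b'$, so only the third is at issue there. I would perform three successive corrections inside affine fibres of $u'$: first, $y_1 \mapsto y_1+\alpha x_1$ preserves $y_2$ and $b'(x_1,y_1)=1$ while shifting $b'(y_1,y_1)$ by $2\alpha$; then $x_2 \mapsto x_2+\beta y_2$ preserves $u'(x_2)=x_1$ and the value $b'(x_2,y_2)=\varepsilon$ while shifting $b'(x_2,x_2)$ by $2\varepsilon\beta$; and finally $x_2 \mapsto x_2+\gamma x_1$ shifts $b'(x_2,y_1)$ by $\gamma$ without disturbing anything else. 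Solving for $\alpha,\beta,\gamma$ is a linear operation, again using $\charac(\F)\neq 2$ in the first two steps. After these replacements the Gram matrix of $(x_1,x_2,y_1,y_2)$ is, up to the sign $\varepsilon$ on one diagonal, the standard hyperbolic block matrix $\begin{bmatrix} 0 & I_2 \\ I_2 & 0 \end{bmatrix}$, which encodes a hyperbolic family in the symmetric case and a symplectic family in the skewsymmetric case; its nonzero determinant guarantees linear independence.

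The principal obstacle is essentially bookkeeping: keeping track of the sign $\varepsilon$ and verifying at each of the three adjustment steps that all previously achieved relations are preserved. There is no deeper algebraic difficulty; the argument is a short sequence of elementary linear-algebraic manipulations that make essential but unsurprising use of the assumption $\charac(\F)\neq 2$.
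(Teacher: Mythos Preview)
The paper does not give its own proof of this lemma; it simply cites \cite{dSPsquarezeroquadratic}. So there is nothing to compare against, and the question is whether your argument stands on its own.

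In the symmetric case it does: your construction of $x_1,x_2,y_1,y_2$ and the three affine corrections are correct, and the resulting Gram matrix is exactly that of a $b'$-hyperbolic family.

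In the skewsymmetric case there is a genuine problem that you paper over. Your own computation gives $b'(x_2,y_2)=\varepsilon=-1$, whereas the paper's explicit definition of a $b'$-symplectic family (stated immediately after the lemma) requires $b'(x_i,y_i)=1$ for \emph{both} $i\in\{1,2\}$. This is not a matter of bookkeeping: the identity
\[
b'(x_1,y_1)=b'(u'(x_2),y_1)=b'(x_2,u'(y_1))=b'(x_2,-y_2)=-b'(x_2,y_2)
\]
is forced by $b'$-selfadjointness of $u'$ together with the constraints $u'(x_2)=x_1$ and $u'(y_1)=-y_2$. Hence no symplectic family in the paper's sense can satisfy those two constraints simultaneously, and the lemma as literally stated is inconsistent in the skewsymmetric case. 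This is a sign slip in the paper (the condition should presumably read $u'(y_1)=y_2$ in that case), not a defect in your method. Your family does have an invertible Gram matrix, which is all that Step~3 of the key lemma's proof actually needs, so the application survives; but your sentence ``the specific sign $-y_2$ \dots\ precisely matches the paper's convention'' is false and should be replaced by an explicit acknowledgement of the discrepancy.
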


Remember that if $b'$ is skewsymmetric, a $b'$-symplectic family $(x_1,x_2,y_1,y_2)$ is one in which $b'(x_i,y_i)=-b'(y_i,x_i)=1$ for all $i \in \{1,2\}$, whereas all the other pairs of vectors
of $(x_1,x_2,y_1,y_2)$ are mapped by $b'$ to $0$.

\vskip 3mm
We are now ready to analyze the situation in Lemma \ref{lemma:keyunipotent}.
We will write $u=(\id+a)(\id+a')$, where $a,a'$ are $b$-skewselfadjoint square-zero endomorphisms.
We set $v:=u+u^{-1}-2\id=aa'+a'a$. By Lemma \ref{lemma:commutation}, $\id+a$ commutes with $v$, and hence $a$ commutes with $v$.
Note that $\Ker v=\Ker (u-\id)^2$.

Throughout, we set
$$B_1=(b,u)_{t-1,1} \quad \text{and} \quad B_3:=(b,u)_{t-1,3}.$$
First of all, as seen in Section \ref{section:results}, the conclusion of Lemma \ref{lemma:keyunipotent} means that
$\nu(B_3)+\nu(B_1 \bot (-B_3)) \geq \rk(B_3)$, and it is precisely this inequality we shall prove.
Note that it is obviously true if $B_3$ is hyperbolic.

\vskip 3mm
\noindent \textbf{Step 1: Reduction to the indecomposable case.}

Here, we will see that it suffices to consider the case where $(b,u)$ is \emph{indecomposable} as
a $U_2$-splittable $1$-isopair, meaning that
there do not exist non-trivial $1$-isopairs $(b_1,u_1)$ and $(b_2,u_2)$
such that $(b,u) \simeq (b_1,u_1) \bot (b_2,u_2)$ and each $(b_i,u_i)$ is $U_2$-splittable.

So, assume that such a decomposition exists and that the conclusion of Lemma \ref{lemma:keyunipotent} is valid for both $(b_1,u_1)$ and $(b_2,u_2)$.
Note that $B_1 \simeq (b_1,u_1)_{t-1,1} \bot (b_2,u_2)_{t-1,1}$ and
$B_3 \simeq (b_1,u_1)_{t-1,3} \bot (b_2,u_2)_{t-1,3}$.
Thus,
\begin{align*}
\nu(B_3)+\nu\bigl(B_1 \bot (- B_3)\bigr)
& \geq \nu\bigl((b_1,u_1)_{t-1,3}\bigr)+\nu\bigl((b_2,u_2)_{t-1,3}\bigr)\\
& \hskip 10mm +\nu\bigl((b_1,u_1)_{t-1,1}\bot - (b_1,u_1)_{t-1,3}\bigr) \\
& \hskip 10mm +\nu\bigl((b_2,u_2)_{t-1,1}\bot - (b_2,u_2)_{t-1,3}\bigr)
\end{align*}
and hence
$$\nu(B_3)+\nu\bigl(B_1\bot (- B_3)\bigr)
\geq \rk\bigl((b_1,u_1)_{t-1,3}\bigr)+\rk\bigl((b_2,u_2)_{t-1,3}\bigr)=\rk(B_3).$$
Therefore, by induction on the dimension of the underlying vector space, we see that it suffices to prove the conclusion
under the following additional conditions:
\begin{itemize}
\item[(I)] $(b,u)$ is indecomposable as a $U_2$-splittable $1$-isopair;
\item[(II)] The invariant $B_3$ is non-hyperbolic.
\end{itemize}
So, in the rest of the proof, we assume that conditions (I) and (II) are valid.

\vskip 3mm
\noindent \textbf{Step 2: $a$ maps $V$ into $\Ker v$.}

Since $a$ commutes with $v$, it stabilizes $\Ker v$
and hence it induces an endomorphism $\overline{a}$ of the quotient space $V/\Ker v$, which is the underlying vector space of the bilinear form $B_3$.
It turns out that $\overline{a}$ is $B_3$-skewselfadjoint. Indeed,
as $a$ commutes with $v$, we have
$$\forall (x,y)\in V^2, \quad b\bigl(x,v(a(y))\bigr)=b\bigl(x,a(v(y))\bigr)=- b(a(x),v(y)).$$
Now, we assume that $\overline{a} \neq 0$ and we seek to find a contradiction by applying Lemma \ref{lemma:squarezero}.

First of all, let $x \in V$ be an arbitrary vector. We shall see that
$$W_x:=\Vect\bigl(x,a(x),(u-\id)(x),(u-\id)(a(x)),v(x),v(a(x))\bigr)$$
is stable under both $a$ and $u$.
First of all, we note that
\begin{equation}\label{Wx2eexpresion}
W_x=\Vect\bigl(x,u(x),u^{-1}(x),a(x),u(a(x)),u^{-1}(a(x))\bigr).
\end{equation}
Since $(u-\id)^3=0$, we see that $u^2$ is a linear combination of $u,\id,u^{-1}$, and hence equality \eqref{Wx2eexpresion}
shows that $W_x$ is stable under $u$, and hence also under $u^{-1}$.
It remains to prove that $W_x$ is stable under $a$.
Using $a^2=0$ and the commutation of $a$ with $v$, it will suffice to prove that $(a(u-\id))(x)$ and $(a(u-\id)a)(x)$ belong to $W_x$.
To prove these facts, we start by collecting the following information:
\begin{enumerate}[(i)]
\item $(u-\id)(a(x))=a'a(x)+aa'a(x)$ belongs to $W_x$;
\item $u^{-1}(a(x))=(\id-a')(a(x))=a(x)-a'a(x)$ belongs to $W_x$;
\item $v(x)=aa'(x)+a'a(x)$ belongs to $W_x$.
\end{enumerate}
By combining point (ii) with the definition of $W_x$, we find that $a'a(x) \in W_x$.
Then $aa'(x) \in W_x$ by point (iii), and finally $aa'a(x) \in W_x$ by point (i).
As $a(u-\id)(x)=aa'(x)$ and $a(u-\id)a(x)=aa'a(x)$, we conclude that $W_x$ is stable under $a$.

We will now use the $W_x$ spaces to contradict assumption (I).

As $\overline{a}\neq 0$ and $\overline{a}$ is $B_3$-skewselfadjoint, Lemma \ref{lemma:squarezero} yields a $B_3$-hyperbolic family $(\overline{x_1},\overline{x_2},\overline{y_1},\overline{y_2})$ in $V/\Ker v$ such that $\overline{a}(\overline{x_2})=\overline{x_1}$ and $\overline{a}(\overline{y_1})=-\overline{y_2}$.

We choose arbitrary representatives $x_2$ and $y_1$ of $\overline{x_2}$ and $\overline{y_1}$ in $V$, and then we set $x_1:=a(x_2)$ and $y_2:=-a(y_1)$.
Now, we consider the space $U:=W_{x_2}+W_{y_1}$, which is stable under $u$ and $u_1=\id+a$, and hence under $u_1$ and $u_2=u_1^{-1} u$.
We will prove that $U$ is $b$-regular.
To see this, we consider the $12$-tuple
\begin{multline*}
(z_i)_{1 \leq i \leq 12} :=
\bigl(x_1,x_2,y_1,y_2,(u-\id)(x_1),(u-\id)(x_2),(u-\id)(y_1),(u-\id)(y_2),\\
v(x_1),v(x_2),v(y_1),v(y_2)\bigr).
\end{multline*}
Since $(u-\id)^3=0$, we have $\im (u-\id)^2 \subset \Ker (u-\id)$ and hence $\im (u-\id)^2 \bot_b \im (u-\id)$.
Noting that $\im v=\im (u-\id)^2$,
it follows that the matrix of $b$ in $(z_i)_{1 \leq i \leq 12}$ has the form
$$M=\begin{bmatrix}
? & ? & A \\
? & B & [0]_{4 \times 4} \\
A^T & [0]_{4 \times 4} & [0]_{4 \times 4}
\end{bmatrix}$$
where each block is a $4$-by-$4$ matrix.
Noting that $b((u-\id)(x),(u-\id)(y))=b(x,(u-\id)^\star (u-\id)(y))=-b(x,v(y))$ for all $x,y$ in $V$, we find that
$B=-A$.
Moreover, $A$ is precisely the matrix of $B_3$ in the hyperbolic family $(\overline{x_1},\overline{x_2},\overline{y_1},\overline{y_2})$, and
hence it is invertible. From this, we derive that $M$ is invertible.
We deduce that $(z_i)_{1 \leq i \leq 12}$
is a basis of $U$ and that $U$ is $b$-regular.
Hence $V=U \overset{\bot_b}{\oplus} U^{\bot_b}$, and $U^{\bot_b}$
is stable under both $u$ and $u_1$ because $u$ and $u_1$ are $b$-isometries.
It follows that both induced pairs $(b,u)^{U}$ and $(b,u)^{U^{\bot_b}}$ are $U_2$-splittable.
By assumption (I), we deduce that $U=V$ (because $U \neq \{0\}$).
Now, in that situation $B_3$ is hyperbolic,
thereby contradicting assumption (II).

\vskip 2mm
We conclude that $\overline{a}=0$, which means that $a$ maps $V$ into $\Ker v$.
It follows that $va=0$ and hence $av=0$ (see point (iii) in Lemma \ref{lemma:commutation}). Thus, $a$ vanishes everywhere on $\im v=\im(u-\id)^2$.

\vskip 3mm
\noindent \textbf{Step 3: $a$ maps $\Ker v$ into $\im (u-\id)+\Ker (u-\id)$.}

Remember from the start of Step 2 that $a$ stabilizes $\Ker v$.
Moreover, by Lemma \ref{lemma:commutation}, $a$ also stabilizes $\im (u-\id)$ and $\Ker (u-\id)$.
It follows that $a$ induces an endomorphism
$\overline{a}$ of the quotient space $V':=\Ker v /(\im (u-\id)+\Ker (u-\id))$.

Noting that $u-u^{-1}=u^{-1}(u+\id)(u-\id)$, we obtain that
$$(x,y) \mapsto b\bigl(x,(u-u^{-1})(y)\bigr)$$
induces a bilinear form on $V'$, denoted by $B_2$.
Because here $\im (u-\id) \subset \Ker (u-\id)^2$, we note that
$V'=\Ker (u-\id)^2/(\Ker (u-\id)+(\im(u-\id)\cap \Ker (u-\id)^2))$,
and just like in Section \ref{section:Wallinvariants} for the construction of the quadratic Wall invariants of a $-1$-isopair,
we find that $B_2$ is non-degenerate.
And finally $B_2$ is skewsymmetric because
$$\forall x \in V, \; b(x,(u-u^{-1})(x))=b(x,u(x))-b(x,u^{-1}(x))=b(x,u(x))-b(u(x),x)=0.$$
Hence, $B_2$ is a symplectic form.

Next, we prove that $\overline{a}$ is $B_2$-selfadjoint. To see this, we simply note that the endomorphism $u''$ of $\Ker v$ induced by $u$ satisfies
$u''+(u'')^{-1}=2\id$. Applying Lemma \ref{lemma:skewcommutelemma}, we deduce that the endomorphism $a''$ of $\Ker v$ induced by $a$
skewcommutes with $u''-(u'')^{-1}$, and hence
\begin{equation}\label{eq:skewcomm}
\forall x \in \Ker v, \quad a(u-u^{-1})(x)=-(u-u^{-1})(a(x)).
\end{equation}
It follows that
$$\forall (x,y)\in (\Ker v)^2, \; b\bigl(x,(u-u^{-1})(a(y))\bigr)=b\bigl(x,-(a(u-u^{-1}))(y)\bigr)
=b\bigl(a(x),(u-u^{-1})(y)\bigr).$$
Hence, $\overline{a}$ is $B_2$-selfadjoint, as claimed.

Assume now that $\overline{a} \neq 0$.
For $x \in \Ker v$, let us set
$$U_x:=\Vect\bigl(x,a(x),(u-u^{-1})(x),(u-u^{-1})(a(x))\bigr)\subset \Ker v.$$
Noting that $(u-u^{-1})^2=u^{-1}(u+\id)^2 v$ vanish everywhere on $\Ker v$,
we see that $U_x$ is stable under $u-u^{-1}$. As $u-u^{-1}$ coincides with $2(u-\id)$ on $\Ker v$,
we deduce that $U_x$ is stable under $u$. Using $a^2=0$ together with formula \eqref{eq:skewcomm},
we also find that $U_x$ is stable under $a$. We conclude that $U_x$ is stable under $\id+a$ and under $\id+a'=(\id+a)^{-1} u$.

This time around, we apply Lemma \ref{lemma:squarezero} (like in Step 2, but here $B_2$ is symplectic)
to the pair $(B_2,\overline{a})$ to obtain a family $(x_1,x_2,y_1,y_2)$ of vectors of $\Ker v$
whose family of classes modulo $\Ker (u-\id)+\im (u-\id)$ is a $B_2$-symplectic family and for which $a(x_2)=x_1$ and $a(y_1)=-y_2$.
We deduce that the family $(x_1,x_2,y_1,y_2,(u-u^{-1})(x_1),(u-u^{-1})(x_2),(u-u^{-1})(y_1),(u-u^{-1})(y_2))$ spans $U_{y_1}+U_{x_2}$
and that the matrix of $b$ in that family equals
$$M=\begin{bmatrix}
? & K' \\
(K')^T & [0]_{4 \times 4}
\end{bmatrix}$$ for $K':=\begin{bmatrix}
[0]_{2 \times 2} & I_2 \\
-I_2 & [0]_{2 \times 2}
\end{bmatrix}$, which is the matrix of $B_2$ in $(x_1,x_2,y_1,y_2)$.
Then $M$ is invertible.
Just like in Step 3, this leads to $U_{y_1}+U_{x_2}=V$ and further to $v=0$.
In particular $(u-\id)^2=0$ and hence $B_3=0$, contradicting assumption (II).
We conclude that $\overline{a}=0$, which means that
$a$ maps $\Ker v$ into $\Ker (u-\id)+\im (u-\id)$.

\vskip 3mm
\noindent \textbf{Step 4: Introducing an important linear map.}

Now, we look at the endomorphism $\overline{a}$ of $V/(\Ker (u-\id)+\im (u-\id))$ induced by $a$.
We know that its range is included in $\Ker v/(\Ker (u-\id)+\im (u-\id))$ and
that it vanishes everywhere on $\Ker v/(\Ker (u-\id)+\im (u-\id))$.
We start by choosing direct summands $D_1$ and $D_2$ as follows:
$$V/(\Ker (u-\id)+\im (u-\id))=\Ker \overline{a} \oplus D_1$$
and
$$\Ker \overline{a}=(\Ker v)/(\Ker (u-\id)+\im (u-\id)) \oplus D_{2.}$$
Then, we lift $D_1$ and $D_2$ to subspaces $V_1$ and $V_2$ of $V$ (so that the canonical projection from $V$ onto $V/(\Ker (u-\id)+\im (u-\id))$ maps bijectively $V_i$ onto $D_i$ for all $i \in \{1,2\}$). Hence
$$\Ker v \oplus V_1 \oplus V_2=V, \quad a(V_2) \subset \Ker (u-\id)+\im (u-\id),$$
$$(\Ker (u-\id)+\im (u-\id))\oplus a(V_1) \subset \Ker v \quad \text{and} \quad \dim a(V_1)=\dim V_1.$$
Next, we consider, like in Step 3, the symplectic form
$$B_2 : (\overline{x},\overline{y}) \mapsto b(x,(u-u^{-1})(y))$$ on
$\Ker v/(\Ker (u-\id)+\im (u-\id))$.
We consider the projection $\overline{a(V_1)}$ of $a(V_1)$ on $\Ker v/(\Ker (u-\id)+\im (u-\id))$,
and then a complementary subspace $D_3$ of the $B_2$-orthogonal $\overline{a(V_1)}^{\bot_{B_2}}$,
and finally we lift $D_3$ to a subspace $V_3$ of $\Ker v$ such that
$\dim V_3=\dim D_3=\dim(a(V_1))=\dim V_1$. Hence, the bilinear form $(x,y) \in a(V_1) \times V_3 \mapsto b(x,(u-u^{-1})(y))$
is non-degenerate on both sides.

Next, we consider the quotient space
$$Z:=(\Ker (u-\id)+\im (u-\id))/(\Ker (u-\id) \cap \im (u-\id)).$$
We have seen in Section \ref{section:twisteddescent} that $b$ induces a non-degenerate symmetric bilinear form $\overline{b}$ on $Z$, and that
$\overline{b}$ is equivalent to $B_1 \bot (-B_3)$.
We have $a(V_2) \subset \Ker (u-\id)+\im (u-\id)$, whereas $a(V_3) \subset \Ker (u-\id)+\im (u-\id)$ by Step 3.
Hence, we can consider the induced linear mapping
$$f : \begin{cases}
V_3\oplus V_2 & \longrightarrow Z \\
x & \longmapsto \overline{a(x)}.
\end{cases}$$
Since $a^2=0$, we have $\im a \subset \Ker a=(\im a)^{\bot_b}$, and hence
every vector in the range of $f$ is $\overline{b}$-isotropic.
It follows that
$$\nu\bigl(B_1 \bot (-B_3)\bigr) \geq \rk(f).$$
In order to conclude, it suffices to prove that $\rk(f) \geq \rk(B_3)-\nu(B_3)$.

\vskip 3mm
\noindent \textbf{Step 5: Proving that $\rk(f) \geq  \rk(B_3)-\nu(B_3)$.}

We start by checking that $\Ker f \cap V_3=\{0\}$. Let $x \in \Ker f \cap V_3$.
Then $a(x) \in \Ker (u-\id) \cap \im (u-\id)$ and in particular $(u-\id)(a(x))=0$, leading to
$(u-u^{-1})(a(x))=0$ because $u-u^{-1}=u^{-1}(u+\id)(u-\id)$.
Since $x \in \Ker v$, it follows from Lemma \ref{lemma:skewcommutelemma} that $a(u-u^{-1})(x)=0$.
Hence
$$\forall z \in V_1, \; b(a(z),(u-u^{-1})(x))=-b(z,a(u-u^{-1})(x))=0.$$
From the choice of $V_3$, we deduce that $x=0$.

Now, denote by $\pi$ the projection from $V_3 \oplus V_2$ onto $V_2$ along $V_3$.
Let $x \in \Ker f$, which we split into $x=x_3+x_2$ with $x_3 \in V_3$ and $x_2=\pi(x) \in V_2$.
First, we note that $a(x) \in \Ker (u-\id)$, leading to $a'(a(x))+(aa'a)(x)=0$.
Noting that $(aa'a)^\star=a^\star (a')^\star a^\star=(-1)^3 aa'a=-aa'a$,
we find $b(x,aa'a(x))=0$ and hence $b(x,a'(a(x)))=0$. It follows that $b(a'(x),a(x))=0$.
Finally
$$b(x,v(x))=b(x,(aa'+a'a)(x))=-b(a(x),a'(x))-b(a'(x),a(x))=-2b(a'(x),a(x))=0.$$
Since $V_3 \subset \Ker v$, it follows that the projection $\overline{\pi(\Ker f)}$ in $V/\Ker v$
of the subspace $\pi(\Ker f)$ is totally $B_3$-isotropic, yielding
$$\dim \overline{\pi(\Ker f)} \leq \nu(B_3).$$
Yet $\dim \overline{\pi(\Ker f)}=\dim \pi(\Ker f)$ because $V_2 \cap \Ker v=\{0\}$, whereas
$\dim \pi(\Ker f)=\dim \Ker f$ because $\Ker f \cap V_3=\{0\}$.
We conclude by the rank theorem that
\begin{align*}
\rk f & =\dim V_2+\dim V_3-\dim \Ker f \\
& \geq \dim V_2+\dim V_3-\nu(B_3) \\
& \geq \dim V_2+\dim V_1-\nu(B_3)=\dim(V/\Ker v)-\nu(B_3)=\rk (B_3)-\nu(B_3).
\end{align*}
Hence, the proof of the key lemma is finally complete, as well as the proof of Theorem \ref{theo:orthogonalunipotent}.
This completes our characterization of products of two unipotent elements of index $2$ in orthogonal groups.

\end{document}